\documentclass[preprint]{elsarticle}       
\usepackage{lipsum}
\makeatletter
\def\ps@pprintTitle{%
 \let\@oddhead\@empty
 \let\@evenhead\@empty
 \def\@oddfoot{}%
 \let\@evenfoot\@oddfoot}
\makeatother

\usepackage{lscape}
\usepackage{lineno,hyperref}
\usepackage{rotating}
\usepackage{makeidx}
\usepackage{float}
\usepackage{epsfig}
\usepackage{amsmath,amssymb}
\usepackage[latin1]{inputenc}
\usepackage{epic,eepic}
\usepackage{overpic}
\usepackage{color}
\usepackage{amsfonts}
\usepackage{graphicx}
\usepackage{mwe} 
\usepackage{subcaption}

\usepackage{algorithm}
\usepackage{algorithmicx}
\usepackage{algpseudocode}
\usepackage{enumitem}

\usepackage{tikz}
\usetikzlibrary{arrows.meta,calc,decorations.markings,patterns}

\usepackage{listings}
            {\hfill\penalty10000\raisebox{-.09em}{\large\bf\rm $\blacksquare$}\par\medskip}

\newtheorem{theorem}{Theorem}[section]

\newtheorem{proposition}[theorem]{Proposition}

\newtheorem{remark}{Remark}
\newtheorem{proof}{Proof}
[section]

\journal{}

\begin{document}

\small

\begin{frontmatter}

\title{Data dependent Shepard approximation through and adaptive modification of the shape parameter}\tnotetext[label1]{The second and fourth authors have been supported through GVA project CIAICO/2024/089, the fourth author has also been funded by grant  PID2023-146836NB-I00 funded by MCIN/AEI/10.13039/501100011033. The third author has been supported by NSFC (Grant No. 12022104 and 12371394).}

\author[UPCT]{Jos\'e Kuruc}
\ead{Jose.kuruc@hotmail.com}
\author[UPCT]{Juan Ruiz-\'Alvarez}
\ead{juan.ruiz@upct.es}
\author[HNU]{Bo Wang}
\ead{bowang@hunnu.edu.cn}
\author[UV]{Dionisio F. Y\'a\~nez}
\ead{Dionisio.Yanez@uv.es}

\date{Received: date / Accepted: date}

\address[UPCT]{Departamento de Matem\'atica Aplicada y Estad\'istica. Universidad  Polit\'ecnica de Cartagena, Cartagena (Spain).}
\address[HNU]{School of Mathematics and Statistics, Institute of Interdisciplinary Studies, Hunan Normal University, Hunan 410081, (P. R. China).}
\address[UV]{Departamento de Matem\'aticas. Universidad de Valencia, Valencia (Spain).}


\begin{abstract}
In this article, we introduce a novel data-dependent Shepard interpolation method inspired by the adaptive strategies proposed in \cite{RBF_arxiv}. In this case, as Shepard interpolation does not produce oscillations, our approach has the core objective of reducing the smearing near jump discontinuities in the data in one and two dimensions. While the original work in \cite{RBF_arxiv} focuses in on Radial Basis Function (RBF) interpolation, we extend these ideas to the Shepard framework by incorporating a data-dependent adaptation mechanism. Specifically, we modify the classical Shepard interpolation by adaptively adjusting the influence weights based on local smoothness indicators that modify the shape parameter. These indicators, similar to those used in \cite{RBF_arxiv}, are designed to detect discontinuities: for grid-based data, we use squared undivided second-order differences, and for scattered data, we employ squared least-squares approximations of the Laplacian scaled by the square of the mean local separation of stencil points. The resulting data-dependent weighting scheme forces the kernels close to a discontinuity to behave like a local delta function, effectively reducing the smearing of the discontinuities introduced by the classical Shepard approach.
We establish the theoretical foundation of the method, including the properties of the new interpolation and we theoretically prove that the reduction of the smearing of discontinuities is possible. Numerical experiments in one and two dimensions confirm that the proposed data-dependent Shepard interpolation significantly reduces the smearing of jump discontinuities while maintaining high accuracy in smooth regions.
\end{abstract}
\begin{keyword}
Shepard \sep smearing reduction \sep improved adaptation to discontinuities \sep MLS    \sep 41A05 \sep  41A10 \sep 65D05 \sep 65M06 \sep 65N06
\end{keyword}
\end{frontmatter}

\section{Introduction and review}

Shepard interpolation is a classical method for approximating multivariate functions based on weighted averages. Originally introduced by Donald Shepard in 1968 \cite{Shepard1968}, it has been widely used due to its simplicity and robustness, especially for scattered data. In this work, we propose a data-dependent Shepard interpolation method inspired by recent advances in adaptive radial basis function (RBF) interpolation, particularly those involving shape parameter modification near discontinuities \cite{RBF_arxiv}.
Given a set of data points ${(\mathbf{x}_i, y_i)}_{i=1}^N$, the goal is to construct an interpolant $s(\mathbf{x})$ such that $s(\mathbf{x}_i) = y_i$ for all $i$. In the classical Shepard method, the interpolant is defined as a normalized weighted average:
\begin{equation}\label{shepard1}
s(\mathbf{x}) = \frac{\displaystyle\sum_{i=1}^N w_i(\mathbf{x}) y_i}{\displaystyle\sum_{i=1}^N w_i(\mathbf{x})},
\end{equation}
where $w_i(\mathbf{x})$ are positive weights that typically decay with the distance between $\mathbf{x}$ and $\mathbf{x}_i$. In the classical Shepard method, the weights can be defined using radial basis functions with a constant shape parameter:
\begin{equation}\label{shepard2}
w_i(\mathbf{x}) = \phi(\varepsilon |\mathbf{x} - \mathbf{x}_i|),
\end{equation}
where $\phi$ is a radial basis function and $\varepsilon$ is a constant shape parameter.  In our data-dependent extension, we define a locally varying shape parameter that increases near discontinuities $\tilde\varepsilon_i$, effectively sharpening the kernel and reducing the influence of kernels crossing the discontinuity.
As in the RBF framework, the choice of $\phi$ significantly affects the behavior of the interpolant. Table \ref{tabla1nucleos} lists several commonly used radial basis kernels that can be used within the Shepard formulation. These kernels exhibit different degrees of smoothness and localization, and their behavior as $\varepsilon \to \infty$ mimics that of a discrete delta function, which has been found to be desirable near discontinuities in other contexts \cite{Jung2007, RBF_arxiv}.
\begin{table}[h!]
\centering
\begin{tabular}{lll}
\hline
$\phi(r,\varepsilon)$& RBF &                                                          \\ \hline
$e^{-  (\varepsilon r)^2}$ & Gaussian $\mathcal{C}^\infty$ & G                                          \\
$e^{-  \varepsilon r} \left( 1 +   \varepsilon r \right)$ & Mat\'ern $\mathcal{C}^2$ & M2 \\
$e^{-  \varepsilon r} \left( 3 + 3  \varepsilon r +   (\varepsilon r)^2 \right)$ & Mat\'ern $\mathcal{C}^4$ & M4 \\
$(1 -  \varepsilon r)^4_+ \left( 4 \varepsilon r + 1 \right)$ & Wendland $\mathcal{C}^2$ & W2 \\
$(1 -  \varepsilon r)^6_+ \left( 35 (\varepsilon r)^2 + 18 \varepsilon r + 3 \right)$ & Wendland $\mathcal{C}^4$ & W4 \\ \hline
\end{tabular}
\caption{Examples of RBFs that resemble a discrete delta function when $\varepsilon\to\infty$.}\label{tabla1nucleos}
\end{table}
In this data-dependent Shepard framework, the adaptivity of the shape parameter is guided by smoothness indicators, allowing the method to preserve accuracy in smooth regions while reducing the smearing of jump discontinuities in the data. This approach combines the local averaging nature of Shepard interpolation with the flexibility of RBF kernels, resulting in a robust interpolant for both smooth and piecewise-smooth functions.

In this work, we focus on the accurate approximation of functions exhibiting jump discontinuities, a well-known and challenging problem in numerical analysis. Classical global approximation techniques and high-order linear methods, including those relying on Radial Basis Functions (RBFs), often perform poorly in this setting. In particular, they typically generate spurious oscillations near discontinuities (the Gibbs phenomenon) and tend to smear sharp transitions, ultimately reducing the overall quality of the reconstruction. Over the years, numerous strategies have been proposed to overcome these difficulties (see, for example, \cite{Arandiga2005,Bozzini2013,Bozzini2014,Crampton2005,Gout2008,Lenarduzzi2017}). Although many of these contributions address general interpolation problems or surface reconstruction, only a limited number specifically target RBF-based approaches. An important contribution in this direction is due to Jung \cite{Jung2007}, who showed that multiquadric RBFs with shape parameters that decrease adaptively in the vicinity of discontinuities can substantially mitigate oscillatory behavior. This approach effectively produces a locally linear approximation, thereby enhancing both stability and accuracy. A more sophisticated class of techniques is based on variably scaled RBFs, where discontinuous scaling functions are incorporated around the locations of the jumps. As a consequence, the resulting interpolants are allowed to be discontinuous, which improves their ability to reproduce the true behavior of the underlying function \cite{DeMarchi2019,Romani2018,Erb2019}. However, these strategies usually rely on prior information about the position of discontinuities, making an initial edge-detection procedure unavoidable. 
In parallel, nonlinear reconstruction techniques such as Essentially Non-Oscillatory (ENO) and Weighted ENO (WENO) schemes \cite{Liu1994,JiangShu1996} have demonstrated remarkable success in capturing sharp gradients and discontinuities while maintaining high-order accuracy. These methods rely on adaptive weighting mechanisms based on local smoothness indicators, a principle that has inspired extensions to meshfree frameworks, including RBF-based ENO and WENO formulations. In particular, Guo and Jung \cite{Guo2017a,Guo2017b} introduced RBF-based ENO and WENO schemes in which the shape parameter is tuned to improve performance near discontinuities. More recently, Ar\`andiga et al. \cite{Arandiga2020} proposed a multiquadric RBF-WENO framework that combines locally several RBF kernels (typically two or three) in order to achieve high-order accuracy while effectively suppressing oscillations. At the same time, significant advances have been achieved in the development of high-order finite difference schemes for problems involving discontinuities or high gradients in other contexts. In particular, compact finite difference methods for the solution of PDEs have been shown to retain high-order accuracy even in the presence of strong irregularities in the solution or its coefficients \cite{LiIto2006, Feng2021,Feng2022}. Although these approaches are typically formulated on structured meshes, they highlight the importance of carefully designed approximation mechanisms when dealing with non-smooth data.

This manuscript is organized as follows. Section 2 introduces the data-dependent modification applied to the shape parameter, analyzes its impact on the affected kernel functions, and discusses the required properties of both the smoothness indicators and the composing function used to adapt the interpolation near discontinuities. It also presents the final form of the interpolation expression that achieves this adaptivity. Section 3 provides a theoretical analysis of the weights and examines the number of intervals affected by the discontinuity, considering both the classical Shepard interpolation and the data-dependent modification that we propose. Section 4 discusses the design of smoothness indicators in one and two dimensions, for both grid-based and mesh-free data. Section 5 presents numerical experiments supporting the theoretical findings and demonstrating the effectiveness of the proposed method. Finally, Section 6 summarizes the main conclusions.

\section{A new Shepard interpolation based on the data-dependent modification of the shape parameter}

In this section, we present an automatic procedure to locally adjust the shape parameter $\varepsilon$ in the weighting functions used in Shepard interpolation. The main objective is to reduce the influence radius of the kernels near discontinuities, where classical Shepard methods often produce undesirable smearing of jump discontinuities, while maintaining their standard behavior in smooth regions. This adaptive strategy enhances the interpolation quality close to the discontinuities, without compromising the properties of Shepard interpolation in regular areas.
Unlike the linear Shepard formulation, our approach introduces a data-dependent shape parameter that varies with the evaluation point, resulting in spatially varying weights that alter the structure of the interpolation process. To formalize these ideas, we consider the data-dependent Shepard interpolant defined as:
\begin{equation}\label{int_shepard}
\tilde{s}(\mathbf{x}) = \frac{\displaystyle\sum_{i=1}^N \phi(\tilde{\varepsilon}_i \|\mathbf{x} - \mathbf{x}_i\|) y_i}{\displaystyle\sum_{i=1}^N \phi(\tilde{\varepsilon}_i \|\mathbf{x} - \mathbf{x}_i\|)},
\end{equation}
where $\tilde{\varepsilon}_i$ is a point-dependent shape parameter that increases near detected discontinuities. This causes the kernel $\phi$ to become more localized, effectively reducing the contribution of distant data points to the final interpolation. The resulting interpolant adapts its behavior based on the local smoothness of the data, and we will show that offers improved performance for the interpolation of piecewise-smooth data. Thus, we can say that the approach introduced in this work relies on a modified form of Shepard interpolation in which the influence of the weighting functions is deliberately reduced in the vicinity of discontinuities. The key mechanism behind this strategy is a data-dependent adjustment of the shape parameter $\varepsilon$, which is forced to grow near non-smooth regions while remaining almost unchanged in smooth areas. When this occurs, the corresponding weights become extremely concentrated around their associated data points, behaving almost like delta functions and interacting only weakly with neighboring nodes. This strong localization alters the overall interpolation procedure: each modified weight essentially affects only the point at which it is centered, while contributions from the rest of the unaltered weights are the same. The concept applies uniformly to all kernels listed in Table \ref{tabla1nucleos}.

Following the ideas exposed previously, we define the adaptive shape parameter as:
\begin{equation}\label{tildegamma_new}
\tilde{\varepsilon}_i = \varepsilon \cdot \frac{1}{c + g(I_i)},
\end{equation}
where $c$ is a small constant introduced to prevent division by zero, and $g$ is a decaying function given by:
\begin{equation}\label{gx}
g(x) = e^{- (C x)^t},
\end{equation}
with $C > 0$ and $t \ge 1$ being tunable parameters that control the sensitivity of the adaptation. This formulation ensures that $\tilde{\varepsilon}_i$ becomes large near discontinuities (where $I_i$ is large), resulting in sharply localized kernels that approximate a discrete delta function. In smooth regions, $g(I_i)$ remains close to 1, and $\tilde{\varepsilon}_i$ stays near its original value $\varepsilon$.
In our numerical tests, we use the values $c = 10^{-16}$, $C = 1$, and $t = 1$. The quantity $I_i$ is positive and represents a smoothness indicator, a localized numerical measure of how regular the data is around the point $\mathbf{x}_i$. For the indicator to be effective, it should satisfy the following properties:
\begin{enumerate}[label={\bfseries P\arabic*}]
\item\label{P1sm1d_new} In regions where the function is smooth, the indicator should scale with the grid spacing as $I_i = \mathcal{O}(h^m)$ for some $m > 0$.
\item\label{P2sm1d_new} If a discontinuity lies within the stencil $\mathcal{S}_i$, then $I_i = \mathcal{O}(1)$ as $h \to 0$.
\end{enumerate}
These properties ensure that the shape parameter adapts appropriately to the local behavior of the data.
We now prove the following result, Prop. \ref{prop1},  using Taylor's expansion. 
\begin{proposition}\label{prop1}
Let $\tilde{\varepsilon}_i$ be defined as in Equation \eqref{tildegamma_new}, and assume the smoothness indicator $I_i$ satisfies properties \ref{P1sm1d_new} and \ref{P2sm1d_new}, then
$$
\tilde{\varepsilon}_i=\begin{cases}
\varepsilon(1+O(h^{mt}))),& \text{in smooth regions.}\\
\frac\varepsilon2 H(c^{-1},e^{(CI_i)^t}), & \text{near discontinuties}.
\end{cases}
$$
where $H$ is the Harmonic mean. 
\end{proposition}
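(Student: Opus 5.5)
The proof splits into the two regimes singled out in the statement, and in each we expand $g(I_i)=e^{-(CI_i)^t}$ in the definition \eqref{tildegamma_new} of $\tilde\varepsilon_i$.

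\emph{Smooth regions.} By property \ref{P1sm1d_new}, $I_i=\mathcal{O}(h^m)$, so $(CI_i)^t=\mathcal{O}(h^{mt})$. A first-order Taylor expansion of the exponential at $0$ gives
$$g(I_i)=1-(CI_i)^t+\mathcal{O}\big((CI_i)^{2t}\big)=1+\mathcal{O}(h^{mt}).$$
Hence
$$c+g(I_i)=1+c+\mathcal{O}(h^{mt}).$$
We treat $c$ as negligible: in the paper $c=10^{-16}$, and it can be absorbed into the error term if one assumes $c\lesssim h^{mt}$. The expansion $\frac{1}{1+u}=1+\mathcal{O}(u)$ for $u\to0$ then gives $\tilde\varepsilon_i=\varepsilon(1+\mathcal{O}(h^{mt}))$. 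The only care needed is to state clearly that $C,t,c$ are fixed and $h\to0$, so all implicit constants are uniform.

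\emph{Near discontinuities.} By property \ref{P2sm1d_new}, $I_i=\mathcal{O}(1)$ and stays bounded away from zero, so no asymptotic simplification is available. The statement is therefore an exact algebraic identity, and we rewrite it directly. Using $g(I_i)=1/e^{(CI_i)^t}$ we obtain
$$\tilde\varepsilon_i=\frac{\varepsilon}{c+e^{-(CI_i)^t}}=\frac{\varepsilon}{\frac{1}{c^{-1}}+\frac{1}{e^{(CI_i)^t}}}.$$
Recall the harmonic mean of two numbers, $H(a,b)=\frac{2}{1/a+1/b}$. With $a=c^{-1}$ and $b=e^{(CI_i)^t}$ this gives
$$\tilde\varepsilon_i=\frac{\varepsilon}{2}\,H\big(c^{-1},e^{(CI_i)^t}\big),$$
which is exactly the claim. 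As a remark, we would note that $H(a,b)\le 2\min(a,b)$ and that $H$ is increasing in each argument. Consequently $\tilde\varepsilon_i$ grows with $I_i$ and is capped by $\varepsilon c^{-1}$, which explains the delta-like behaviour of the kernel near a jump.

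The only potential obstacle is one of interpretation rather than computation. In the smooth case the constant $c$ produces an $\mathcal{O}(c)$ discrepancy that is not an $\mathcal{O}(h^{mt})$ term. We would handle this either by stating the result modulo $c$ (i.e.\ with the factor $1/(1+c)$, which is $1$ to machine precision) or by assuming $c=\mathcal{O}(h^{mt})$; I would adopt the latter, since it makes the statement literally correct.
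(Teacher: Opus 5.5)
Your proof is correct and follows the paper's route. In smooth regions you Taylor-expand $g(I_i)=e^{-(CI_i)^t}$, which is all the paper offers (``using Taylor's expansion''); near discontinuities you use the exact algebraic rewriting $\varepsilon/(c+e^{-(CI_i)^t})=\frac{\varepsilon}{2}H(c^{-1},e^{(CI_i)^t})$, which holds for every value of $I_i$. Your caveat about $c$ is also right: for fixed $c>0$ the smooth case is really $\tilde\varepsilon_i=\frac{\varepsilon}{1+c}(1+\mathcal{O}(h^{mt}))$. That is exactly what the paper uses later through $\varepsilon_{\min}=\varepsilon/(1+c)$ in (B2), so that fixed-$c$ formulation fits the rest of the paper better than assuming $c=\mathcal{O}(h^{mt})$.
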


Since $H$ is a mean, the following property holds  $\min(x,y)\leq H(x,y)\leq \max(x,y)$, for all $x,y\in(0,+\infty)$. Thus, near discontinuities, if the smoothness indicator is large, then 
$\tilde{\varepsilon}_i$ is also large, and we obtain the following property: If we use radial basis functions $\phi$ with the behavior described in Table \ref{tabla1nucleos},  we have that
\begin{itemize}
\item In smooth regions, $\phi(r, \tilde{\varepsilon}_i) \approx \phi(r, \varepsilon)$.
\item Near discontinuities, $\phi(r, \tilde{\varepsilon}_i) \approx \delta(r)$, where $\delta(r)$ is a discrete delta function defined by:
\[
\delta(r) =
\begin{cases}
1, & r = 0, \\
0, & r \neq 0.
\end{cases}
\]
\end{itemize}
The kernels listed in Table \ref{tabla1nucleos} satisfy these conditions and are discussed in detail in \cite{Fasshauer2007}. For some of these functions, we use the cutoff operator $(\cdot)_+ : \mathbb{R} \to [0,+\infty)$, defined as:

 \begin{equation*}
 (x)_+=\begin{cases}
 x, & x\geq 0,\\
 0, & x<0.
 \end{cases}
 \end{equation*}

\section{Error around the discontinuity}
In this section we analyze the interpolation error in the vicinity of the discontinuity, focusing on its dependence on the shape parameter. In particular, we study the regime in which the shape parameter is allowed to grow unbounded within a thin belt surrounding the discontinuity, and we quantify how this scaling influences the magnitude and spatial extent of the error.


As we mentioned in the previous section, we adapt the shape parameter of Shepard weights so that kernels whose stencils \emph{see} a jump
become very localized (delta-like). We want to proof that this strategy can reduce wrong-side influence near the interface to
\emph{thin the smearing belt} of classical Shepard interpolation. For this, we analyse the exact error identity across the interface. 

Let  $X=\{\mathbf{x}_i\}_{i=1}^N\subset\Omega\subset\mathbb{R}^d$ be the nodes with $\Omega$ an open bounded set, the data $y_i=f(\mathbf{x}_i)$ and let $\Sigma$ split $\Omega=\Omega^{+}\cup\Omega^{-}$ and set
$I^{+}=\{i:\ \mathbf{x}_i\in\Omega^{+}\}$, $I^{-}=\{i:\ \mathbf{x}_i\in\Omega^{-}\}$. We measure plus-side (equivalently minus-side) support pointwise with the
\emph{local plus-side  distance}
\[
h_{X^{+}}(\mathbf{x}) := \inf_{j\in I^{+}}\|\mathbf{x}-\mathbf{x}_j\|.
\]
We compare the modified Shepard method, $\tilde{s}$, introduced in \eqref{int_shepard}, with adaptive $\varepsilon_i$ defined in \eqref{tildegamma_new}, with $I_i$ being a smoothness indicator (introduced in the next section), satisfying properties \ref{P1sm1d_new} and \ref{P2sm1d_new} and  the nodewise Shepard method whose form is
\begin{equation}\label{shep}
s(\mathbf{x})
=
\frac{\sum_{i=1}^N \phi\!\bigl(\varepsilon\|\mathbf{x}-\mathbf{x}_i\|\bigr)\,f(\mathbf{x}_i)}
     {\sum_{i=1}^N \phi\!\bigl(\varepsilon\|\mathbf{x}-\mathbf{x}_i\|\bigr)},\qquad \varepsilon>0.
\end{equation}
 We suppose $\tilde\varepsilon_i\approx\varepsilon$ in smooth regions and $\tilde\varepsilon_i\gg\varepsilon$
on jump-crossing stencils (where kernels become highly localized).
 For $\mathbf{x}\in\Omega^{+}$ (the point where we want to approximate) we define
\[
W^{\pm}(\mathbf{x})=\sum_{i\in I^{\pm}}\phi(\varepsilon\|\mathbf{x}-\mathbf{x}_i\|),\quad
\omega^{\pm}(\mathbf{x})=\frac{W^{\pm}(\mathbf{x})}{W^{+}(\mathbf{x})+W^{-}(\mathbf{x})},\quad
\bar f^{\pm}(\mathbf{x})=\frac{1}{W^{\pm}(\mathbf{x})}\sum_{i\in I^{\pm}}\phi(\varepsilon\|\mathbf{x}-\mathbf{x}_i\|)\,f(\mathbf{x}_i).
\]
In a similar way, we can define:
\[
\tilde{W}^{\pm}(\mathbf{x})=\sum_{i\in I^{\pm}}\phi(\tilde{\varepsilon}_i\|\mathbf{x}-x_i\|),\quad
\tilde{\omega}^{\pm}(\mathbf{x})=\frac{\tilde{W}^{\pm}(\mathbf{x})}{\tilde{W}^{+}(\mathbf{x})+\tilde{W}^{-}(\mathbf{x})},\quad
\tilde{\bar f}^{\pm}(\mathbf{x})=\frac{1}{\tilde{W}^{\pm}(\mathbf{x})}\sum_{i\in I^{\pm}}\phi(\tilde{\varepsilon}_i\|\mathbf{x}-\mathbf{x}_i\|)\,f(\mathbf{x}_i).
\]
The error at the plus-side (equivalent for the minus side) of the discontinuity is then given by the identity
\begin{equation}\label{eq:error-split}
E(\mathbf{x}) := s(\mathbf{x})-f^{+}(\mathbf{x})
= \underbrace{\omega^{+}(\mathbf{x})(\bar f^{+}(\mathbf{x})-f^{+}(\mathbf{x}))}_{\text{smooth-side interpolation}}
+ \underbrace{\omega^{-}(\mathbf{x})(\bar f^{-}(\mathbf{x})-f^{+}(\mathbf{x}))}_{\text{wrong-side leakage}}.\
\end{equation}

We now focus on $s$ (\ref{shepard1}), as the analysis for $\tilde{s}$ (\ref{int_shepard}) is analogous. To prove that the reduction of the smearing region around the discontinuity is possible, we make the following assumptions:
\begin{itemize}
\item[(A1)] We need to assume an exponential kernel decay: $\phi$ is nonincreasing and
$\phi(\lambda)\le C_1 e^{-C_2\lambda^p}$ for some $C_1,C_2>0$, $p\ge 1$.
\item[(A2)] Near/far split on the wrong side ($-$ side, in the case we are considering). Fix $R>0$ and define
\[
I^{-}_{\mathrm{near}}=\{i\in I^{-}:\ \operatorname{dist}(\mathbf{x}_i,\Sigma)\le R\},\qquad
I^{-}_{\mathrm{far}}=\{i\in I^{-}:\ \operatorname{dist}(\mathbf{x}_i,\Sigma)> R\}.
\]
\item[(A3)] Wrong-side trigger. On the wrong-side of the discontinuity, in a near belt around it, $\tilde\varepsilon_i\ge \kappa\,\varepsilon$
for some $\kappa>1$ (this follows from \ref{P2sm1d_new} assuming a small value of $c$ in (\ref{tildegamma_new})).
\end{itemize}

In the proof we use three elementary bounds. Let $d(\mathbf{x})=\operatorname{dist}(\mathbf{x},\Sigma)$:
\begin{itemize}
  \item \textbf{(B1) Near wrong-side bound (constant/adaptive).}
  For $d(\mathbf{x})=\operatorname{dist}(\mathbf{x},\Sigma)$ and $A:=|I^{-}_{\mathrm{near}}|\,C_1$,
  \begin{equation}\label{eq:B1}
    W^{-}_{\mathrm{near}}(\mathbf{x})
    \;\le\; A\,\exp\!\big(-C_2(\eta\, d(\mathbf{x}))^p\big),
    \qquad
    \eta=
    \begin{cases}
      \varepsilon, & \text{constant-}\varepsilon \text{ method},\\[1mm]
      \kappa\,\varepsilon, & \text{adaptive method}.
    \end{cases}
  \end{equation}
  Here we used that $\|\mathbf{x}-\mathbf{x}_i\|\ge d(\mathbf{x})$ for all $i\in I^{-}_{\mathrm{near}}$ and the exponential decay in (A1).

  \item \textbf{(B2) There is a far wrong-side floor for the error that is distance independent.}
  \begin{equation}\label{eq:B2}
    W^{-}_{\mathrm{far}}(\mathbf{x}) \;\le\; B_{\mathrm{far}},
  \end{equation}
  with 
  \[
B_{\mathrm{far}}
:= |I^{-}_{\mathrm{far}}|\,
C_1 \exp\!\bigl(- C_2 (\varepsilon_{\min} R)^{p} \bigr),
\qquad
\varepsilon_{\min}=\frac{\varepsilon}{1+c}.
\]
  This is $d$-independent because far stencils lie in smooth regions (so \ref{P1sm1d_new} rules), hence
  $\tilde\varepsilon_i\approx \varepsilon/(1+c)$, and (A1) gives a uniform bound.

  \item \textbf{(B3) Plus-side denominator bound (pointwise).}
  \begin{equation}\label{eq:B3}
    W^{+}(\mathbf{x})
    \;\ge\; \phi\!\bigl(\varepsilon_{\mathrm{den}}(\mathbf{x})\,h_{X^{+}}(\mathbf{x})\bigr)
    \;=:\; \phi_{\mathrm{den}}(\mathbf{x}).
  \end{equation}
  Here $\varepsilon_{\mathrm{den}}(\mathbf{x})=\varepsilon$ for the constant method and
  $\varepsilon_{\mathrm{den}}(\mathbf{x})$ is the value of the shape parameter at the nearest plus-side node $\mathbf{x}_j$
  for the adaptive method. 
The bound follows since $W^{+}$ is a sum of
nonnegative terms, $W^{+}(\mathbf{x})\ge \phi(\varepsilon_{\mathrm{den}}(\mathbf{x}) h_{X^{+}})>0$.
\end{itemize}

Now we can define two distance-independent floors for the errors. The first one is $\delta_{\mathrm{far}}(\mathbf{x})$, which is the contribution of distant kernels at the wrong side (wrong-side leakage in (\ref{eq:error-split}) for far away data). The second one is $\delta_{\mathrm{smooth}}$, that is given by the accuracy of the method considering smooth data:
\[
\delta_{\mathrm{far}}(\mathbf{x}):=\frac{\|f\|_{d}}{\phi_{\mathrm{den}}(\mathbf{x})}\,B_{\mathrm{far}},
\qquad
\delta_{\mathrm{smooth}}(\mathbf{x}):=\sup_{\text{near }\Sigma}|\omega^{+}(\mathbf{x})(\bar f^{+}(\mathbf{x})-f^{+}(\mathbf{x}))|\le C\,h^{q}.
\]
Here, we have assumed that,
\begin{equation}\label{salto}
\|f\|_{d}
:= \max\{\bigl| f^+(\mathbf{x}_i) - f^-(\mathbf{x}_j) \bigr|: i\in I^+_{\mathrm{near}},\,\,j \in I^-_{\mathrm{near}}\}.
\end{equation}
represents the maximum size of the jump in the data.

To analyse the reduction of the smearing belt fixing a tolerance for the error $\delta$, we need to assure that
\begin{equation}\label{eq:Rdelta-local}
\delta\ >\ \delta_{\mathrm{far}}(\mathbf{x})+\delta_{\mathrm{smooth}}(\mathbf{x})
\quad\text{for all $\mathbf{x}$ in the near belt}.
\end{equation}
This ensures that the error tolerance selected is inside the smearing belt boundary, and is governed by the \emph{near wrong-side} contribution of the data. Now we are prepared to give the following main result.

\begin{theorem}[The smearing belt around the discontinuity can be reduced]
\label{thm:thinning-local}
Under \textnormal{(A1)}-\textnormal{(A3)} and a tolerance \eqref{eq:Rdelta-local}, we define the
distance thresholds
\[
d_{\star}^{\mathrm{const}}(\mathbf{x})
:=\frac{1}{\varepsilon}\left[
\frac{1}{C_2}\log\!\Big(
\frac{A\,\|f\|_{d}}{
\delta\,\phi(\varepsilon\,h_{X^{+}}(\mathbf{x}))-\|f\|_{d}\,B_{\mathrm{far}}}
\Big)\right]^{\!1/p},
\]
\[
d_{\star}^{\mathrm{adap}}(\mathbf{x})
:=\frac{1}{\kappa\,\varepsilon}\left[
\frac{1}{C_2}\log\!\Big(
\frac{A\,\|f\|_{d}}{
\delta\,\phi(\varepsilon_{\mathrm{den}}(\mathbf{x})\,h_{X^{+}}(\mathbf{x}))
-\|f\|_{d}\,B_{\mathrm{far}}}
\Big)\right]^{\!1/p}.
\]
Then for each method (constant/adaptive), \emph{if} $d(\mathbf{x})\ge d_{\star}^{\mathrm{method}}(\mathbf{x})$
we have $|E(\mathbf{x})|\le \delta$. Consequently, the $\delta$-smearing belt in $\Omega^{+}$ is contained in
a tube of half-width $\sup_{\mathbf{x}\text{ on the belt}} d_{\star}^{\mathrm{method}}(\mathbf{x})$.
Moreover,
\[
\frac{\operatorname{width}(B_{\delta}^{\mathrm{adap}})}
     {\operatorname{width}(B_{\delta}^{\mathrm{const}})}
\ \le\
\frac{1}{\kappa}\left(
\frac{
\log\Big(\!\dfrac{A\,\|f\|_{d}}{\delta\,\phi(\varepsilon_{\mathrm{den}}\,h_{X^{+}}(\mathbf{x}))-\|f\|_{d}\,B_{\mathrm{far}}}\Big)
}{
\log\Big(\!\dfrac{A\,\|f\|_{d}}{\delta\,\phi(\varepsilon\,h_{X^{+}}(\mathbf{x}))-\|f\|_{d}\,B_{\mathrm{far}}}\Big)
}
\right)^{\!1/p},
\]
so the adaptive belt can be thinner by about a factor $1/\kappa$ (the logarithmic ratio is close to one).
\end{theorem}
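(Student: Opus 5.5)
The argument starts from the error identity \eqref{eq:error-split}, applied to each method with the appropriate weights. For $\mathbf{x}\in\Omega^{+}$ in the near belt, the triangle inequality bounds the smooth-side term by $\delta_{\mathrm{smooth}}(\mathbf{x})$ directly from its definition. In the wrong-side leakage term we use $\omega^{-}=W^{-}/(W^{+}+W^{-})\le W^{-}/W^{+}$. We also bound $|\bar f^{-}(\mathbf{x})-f^{+}(\mathbf{x})|\le\|f\|_{d}$: $\bar f^{-}$ is a convex combination of wrong-side data, so its distance to $f^{+}(\mathbf{x})$ is controlled by the jump size \eqref{salto}. Strictly, \eqref{salto} only covers near indices and $f^{+}$ at nodes, so I would extend $\|f\|_{d}$ to bound all wrong-side data against $f^{+}$ in the belt; the paper implicitly does this. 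This gives
\[
|E(\mathbf{x})|\le \delta_{\mathrm{smooth}}(\mathbf{x})+\frac{\|f\|_{d}}{W^{+}(\mathbf{x})}\bigl(W^{-}_{\mathrm{near}}(\mathbf{x})+W^{-}_{\mathrm{far}}(\mathbf{x})\bigr).
\]

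Next I would insert the three elementary bounds. (B3) replaces $W^{+}$ by $\phi_{\mathrm{den}}(\mathbf{x})$, and (B2) turns the far part into $\delta_{\mathrm{far}}(\mathbf{x})$. (B1) handles the near part: with $\eta=\varepsilon$ for the constant method and $\eta=\kappa\varepsilon$ for the adaptive one via (A3), it is at most $\|f\|_{d}A\exp(-C_2(\eta d)^p)/\phi_{\mathrm{den}}$. Then $|E|\le\delta$ follows once
\[
\|f\|_{d}A\,e^{-C_2(\eta d)^p}\le \delta\,\phi_{\mathrm{den}}-\|f\|_{d}B_{\mathrm{far}}-\delta_{\mathrm{smooth}}\phi_{\mathrm{den}}.
\]
Because of \eqref{eq:Rdelta-local}, the right-hand side is positive, so the logarithm is well defined. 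Solving for $d$ produces exactly the thresholds $d_\star^{\mathrm{method}}$, with $\phi_{\mathrm{den}}=\phi(\varepsilon h_{X^+})$ or $\phi(\varepsilon_{\mathrm{den}}h_{X^+})$. The statement's thresholds drop the $\delta_{\mathrm{smooth}}$ term. I would either absorb it by working with $\delta-\delta_{\mathrm{smooth}}$ in place of $\delta$, or treat the smooth-side term as part of the floor that \eqref{eq:Rdelta-local} places below the tolerance. This bookkeeping is the main delicate point: the displayed formulas are consistent only up to this convention, so I would state it explicitly. Monotonicity of $\lambda\mapsto e^{-C_2\lambda^p}$ then shows that any $d\ge d_\star$ gives the bound.

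The tube statement is then immediate. Points with error exceeding $\delta$ must satisfy $d(\mathbf{x})<d_\star^{\mathrm{method}}(\mathbf{x})$, so the $\delta$-belt lies in the tube of half-width $\sup d_\star$. Taking the quotient of the two explicit thresholds cancels $C_2^{-1/p}$ and leaves the factor $1/\kappa$ times the $1/p$-th power of the ratio of logarithms. Bounding the ratio of widths by this pointwise ratio (evaluated where the supremum is attained) gives the stated inequality. Finally, I would remark that the logarithmic ratio is near one because $\varepsilon_{\mathrm{den}}\approx\varepsilon$ when the nearest plus-side node lies in a smooth region (Prop.~\ref{prop1}). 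When $\varepsilon_{\mathrm{den}}$ is large, the ratio can exceed one, but it enters only logarithmically and under a $1/p$ power.
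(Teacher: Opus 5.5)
Your proposal follows the paper's proof essentially step for step (the split \eqref{eq:error-split}, $\omega^-\le W^-/W^+$, the bounds (B1)--(B3), solving the exponential inequality for $d$, and dividing the two thresholds). The point you single out is a genuine flaw in the paper's argument, not a mere convention. The paper discards $\phi_{\mathrm{den}}(\mathbf{x})\,\delta_{\mathrm{smooth}}(\mathbf{x})$ ``because it is positive'', but this enlarges the admissible bound on $W^{-}_{\mathrm{near}}$, so the resulting condition is no longer sufficient: at $d(\mathbf{x})=d_\star^{\mathrm{method}}(\mathbf{x})$ the chain only yields $|E(\mathbf{x})|\le\delta+\delta_{\mathrm{smooth}}(\mathbf{x})$. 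Your first remedy is the correct repair: replace $\delta$ by $\delta-\delta_{\mathrm{smooth}}$ inside the logarithms, which \eqref{eq:Rdelta-local} keeps well defined (equivalently, measure the belt by the leakage term alone). Your second option, invoking \eqref{eq:Rdelta-local} by itself, only guarantees that the gap is positive and does not justify the displayed formulas.
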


\begin{proof}
We start from the exact identity and isolate the leakage.
From \eqref{eq:error-split} and $|\omega^{+}(\mathbf{x})(\bar f^{+}(\mathbf{x})-f^{+}(\mathbf{x}))|\le \delta_{\mathrm{smooth}}(\mathbf{x})$,
\[
|E(\mathbf{x})|\ \le\ \delta_{\mathrm{smooth}}(\mathbf{x})
+ \omega^{-}(\mathbf{x})\,|\bar f^{-}(\mathbf{x})-f^{+}(\mathbf{x})|
\ \le\ \delta_{\mathrm{smooth}}(\mathbf{x})
+ \frac{W^{-}(\mathbf{x})}{W^{+}(\mathbf{x})}\,\|f\|_{d},
\]
as by definition of $\|f\|_{d}$ in (\ref{salto}),
$|\bar f^{-}(\mathbf{x})-f^{+}(\mathbf{x})|\le \|f\|_{d}$ in the near belt.

\smallskip
Now, we insert the denominator lower bound and split $W^{-}$.
Using (B3), $W^{+}(\mathbf{x})\ge \phi_{\mathrm{den}}(\mathbf{x})
=\phi(\varepsilon_{\mathrm{den}}\,h_{X^{+}}(\mathbf{x}))$,
and the near/far split $W^{-}(\mathbf{x})=W^{-}_{\mathrm{near}}(\mathbf{x})+W^{-}_{\mathrm{far}}(\mathbf{x})$,
\[
|E(\mathbf{x})|\ \le\ \delta_{\mathrm{smooth}}(\mathbf{x})
+ \frac{\|f\|_{d}}{\phi_{\mathrm{den}}(\mathbf{x})}
\Big(W^{-}_{\mathrm{near}}(\mathbf{x})+W^{-}_{\mathrm{far}}(\mathbf{x})\Big).
\]
\smallskip
By \eqref{eq:Rdelta-local}, the \emph{gap}
\[
\Gamma(\mathbf{x})
:=\delta\,\phi_{\mathrm{den}}(\mathbf{x})
-\|f\|_{d}\,B_{\mathrm{far}}
-\phi_{\mathrm{den}}(\mathbf{x})\,\delta_{\mathrm{smooth}}(\mathbf{x})
\]
is strictly positive (the tolerance exceeds the floors). Therefore a \emph{sufficient} condition for
$|E(\mathbf{x})|\le \delta$ is
\[
  \delta_{\mathrm{smooth}}(\mathbf{x})
+ \frac{\|f\|_{d}}{\phi_{\mathrm{den}}(\mathbf{x})}
\Big(W^{-}_{\mathrm{near}}(\mathbf{x})+W^{-}_{\mathrm{far}}(\mathbf{x})\Big)\le\delta,
\]
and
\[
W^{-}_{\mathrm{near}}(\mathbf{x})\ \le\ \frac{\Gamma(\mathbf{x})}{\|f\|_{d}}
\ \le\ \frac{\delta\,\phi_{\mathrm{den}}(\mathbf{x})-\|f\|_{d}\,B_{\mathrm{far}}}{\|f\|_{d}}.
\]
where we have dropped $\phi_{\mathrm{den}}(\mathbf{x})\delta_{\mathrm{smooth}}(\mathbf{x})$, that is a positive quantity.

\smallskip
Now we use the near bound and solve for the distance $d$. By (B1), in the constant-$\varepsilon$ case,
$W^{-}_{\mathrm{near}}(\mathbf{x})\le A\,e^{-C_2(\varepsilon d)^p}$, hence it suffices that
\[
A\,e^{-C_2(\varepsilon d)^p}
\ \le\
\frac{\delta\,\phi(\varepsilon\,h_{X^{+}}(\mathbf{x}))-\|f\|_{d}\,B_{\mathrm{far}}}{\|f\|_{d}}.
\]
Taking logs and solving for $d$ gives
\[
d\ \ge\
\frac{1}{\varepsilon}\left[
\frac{1}{C_2}\log\!\Big(
\frac{A\,\|f\|_{d}}{
\delta\,\phi(\varepsilon\,h_{X^{+}}(\mathbf{x}))-\|f\|_{d}\,B_{\mathrm{far}}}
\Big)\right]^{\!1/p}
=:d_{\star}^{\mathrm{const}}(\mathbf{x}).
\]
The adaptive case is identical, replacing $\varepsilon$ by $\kappa\varepsilon$ (by (A3)) and
$\phi(\varepsilon h_{X^{+}})$ by $\phi(\varepsilon_{\mathrm{den}}(\mathbf{x}) h_{X^{+}})$, yielding
$d_{\star}^{\mathrm{adap}}(\mathbf{x})$.

\smallskip

Now, if $d(\mathbf{x})\ge d_{\star}^{\mathrm{method}}(\mathbf{x})$ then $|E(\mathbf{x})|\le \delta$, so the belt
is contained in the tube $\{d(\mathbf{x})<\sup d_{\star}^{\mathrm{method}}(\mathbf{x})\}$. This gives the
stated width bound and the ratio inequality, with the explicit $1/\kappa$ factor.
\end{proof}

\begin{remark}[The result is stable under refinement]
As the sampling refines and $h_{X^{+}}(\mathbf{x})\to 0$, continuity of $\phi$ at $0$ implies
$\phi(\varepsilon\,h_{X^{+}}(\mathbf{x}))\to \phi(0)$ and
$\phi(\varepsilon_{\mathrm{den}}(\mathbf{x})\,h_{X^{+}}(\mathbf{x}))\to \phi(0)$ (because $c>0$ in (\ref{tildegamma_new})
keeps $\varepsilon_{\mathrm{den}}h_{X^{+}}\to 0$). Hence the two logarithms in the width ratio
have the same limit and the leading thinning factor is $1/\kappa$.
\end{remark}

\begin{remark}[We need to choose the near/far radius $R$ coherently with the indicator]
Let $S(\mathbf{x}_i)$ be the stencil used to compute $I_i$ (next section) and define its radius
$a_i:=\max_{\mathbf{x}_j\in S(\mathbf{x}_i)}\|\mathbf{x}_j-\mathbf{x}_i\|$. If $\operatorname{dist}(\mathbf{x}_i,\Sigma)>a_i$, the stencil cannot
cross the jump, so $i\in I^{-}_{\mathrm{far}}$. A single guard $R:=\sup_{i\in I^{-}} a_i$  guarantees (A2).
\end{remark}

\begin{remark}[For compact support kernels, the far floor vanishes inside a safe belt]
If $\phi(\varepsilon r)=\psi(\varepsilon r)$ with $\psi(s)=0$ for $s\ge \rho$, then choosing $R$ so that
$R-d\ge \rho/\varepsilon_{\min}$ with $\varepsilon_{\min}=\varepsilon/(1+c)$ implies
$W^{-}_{\mathrm{far}}(\mathbf{x})=0$ throughout the belt. The same proof applies, now with
$B_{\mathrm{far}}=0$ (which implies a slightly thinner belt).
\end{remark}

\section{Discussion about the smoothness indicators in one and several dimensions, and for regular and mesh-free data}\label{si_sec}

A smoothness indicator must satisfy the two conditions P1 and P2 described in Section 2, which ensure that the indicator vanishes for smooth data and grows appropriately near discontinuities. A valid choice for uniform univariate data in this context is the squared undivided second-order difference, given by
\begin{equation}\label{ud1}
\beta = \left( f(x+h) - 2f(x) + f(x-h) \right)^2,
\end{equation}
which has size $\mathcal{O}(h^4)$ for sufficiently smooth functions. In the same sense, for data in two dimensions using an uniform grid, we can use the squared five points stencil Laplacian as smoothness indicator, which is given by,
\begin{equation}\label{ud2}
\beta = \left( f(x+h, y) + f(x-h, y) + f(x, y+h) + f(x, y-h) - 4f(x, y) \right)^2.
\end{equation}
For non-uniform data in one or several dimensions, one can use a different strategy. 
Let $X = \{x_i\}_{i=1}^N \subset \Omega \subset \mathbb{R}^2$ be a set of scattered nodes and let $u : X \to \mathbb{R}$ denote pointwise samples of a sufficiently smooth function $u$. For each centre $x_0 \in X$ choose a local stencil of $K$ neighbours $S(x_0) = \{x_j\}_{j=1}^K$ with $x_1 = x_0$. We seek weights $w_j$ such that
\[
\Delta u(x_0) \approx \sum_{j=1}^K w_j u(x_j),
\]
and that this formula is exact for a finite polynomial space. A similar idea is used, for example, in \cite{LiIto2006} (see pages 68-69) to compute surface derivatives in several dimensions. Let $P_m$ denote the polynomials in two variables of total degree $\leq m$. Imposing exactness on a basis $\{p_\ell\}_{\ell=1}^M$ of $P_m$ yields the moment conditions
\[
\sum_{j=1}^K w_j p_\ell(x_j) = \Delta p_\ell(x_0), \quad \ell = 1, \dots, M,
\]
where $M = \dim P_m$. In matrix form this constraint reads
\[
V_w w = b,
\]
with $V_w \in \mathbb{R}^{M \times K}$, $(V_w)_{\ell j} = p_\ell(x_j)$, $w = (w_j)_{j=1}^K$ and $b_\ell = \Delta p_\ell(x_0)$. When $K = M$ and $V_w$ is nonsingular, the weights are obtained by direct solve $w = V_w^{-1} b$. More robustly, one typically takes $K > M$ and computes weights by solving the normal equations
\[
(V_w V_w^\top) \alpha = b, \quad w = V_w^\top \alpha,
\]
i.e., find $\alpha \in \mathbb{R}^M$ such that $V_w^\top \alpha$ is the weight vector whose moments match $b$. Small Tikhonov regularization may be added if $V_w V_w^\top$ is ill-conditioned (a tolerance for the condition number of $10^{-10}$ is used in the numerical experiments):
\[
(V_w V_w^\top + \lambda I) \alpha = b, \quad w = V_w^\top \alpha, \quad \lambda > 0.
\]
For $m = 2$ and univariate data, the monomial basis that we use is $\{1, x-x_0, (x-x_0)^2\}$. Then we have $M = 3$ and the right-hand side is
\[
b = ( 0, 0, 2)^\top.
\]
Analogously, for data in 2D and $m = 2$, we use the monomial basis $\{1, (x-x_0), (y-y_0), (x-x_0)^2, (x-x_0)(y-y_0), (y-y_0)^2\}$, we have $M = 6$ and the right-hand side is
\[
b = (0, 0, 0, 2, 0, 2)^\top,
\]
because $\Delta 1 = \Delta x = \Delta y = 0$, $\Delta x^2 = \Delta y^2 = 2$, $\Delta(xy) = 0$. The resulting weight vector $w$ is then applied to the data values $u(x_j)$ on the stencil to produce the discrete Laplacian approximation $\sum_j w_j u(x_j)$.

As we want to obtain something equivalent to the undivided second differences but for sparse data in several dimensions, we need to define a characteristic scale for the stencil. To do so, we compute a local fill-scale $h_{\text{loc}}(x_0)$. A simple and effective choice is the arithmetic mean of the Euclidean distances from $x_0$ to the other stencil nodes (excluding the zero self-distance):
\[
h_{\text{loc}}(x_0) = \frac{1}{K-1} \sum_{j=2}^K \|x_j - x_0\|.
\]
This $h_{\text{loc}}$ approximates the typical spacing of the stencil. Now, we can multiply the computed $\Delta$-approximation by $h_{\text{loc}}^2$ to obtain
\[
L[u] := h_{\text{loc}}(x_0)^2 \sum_{j=1}^K w_j u(x_j),
\]
which has the same units as a discrete undivided second-difference operator. We use $(L[u])^2$ as smoothness indicator.

A good remark here is that using this approach with 3 points in the univariate case or 5 points in the bivariate case with a uniform grid will return the expressions of the smoothness indicators in (\ref{ud1}) and (\ref{ud2}) respectively. Also, a similar approach can be used for more than 2 dimensions.

\section{Numerical experiments}
In this section, we carry out a collection of numerical tests aimed at assessing the performance of the data-dependent Shepard interpolation strategies introduced in this work. We begin with examples of smooth functions, which allow us to examine both the accuracy and the numerical reliability of the method under different kernel choices in the Shepard weights, and to contrast these results with those obtained using the standard, non-adaptive Shepard formulation. We present the results using the distribution of the point-wise absolute error.

After the smooth-function tests, we turn our attention to functions that present jump discontinuities. Here we explore how the data-dependent Shepard variants behave in the vicinity of the discontinuity, and how effectively they reduce smoothing artifacts compared with the classical formulation. Experiments are conducted in one and two spatial dimensions to illustrate the flexibility of the approach. For all tests, we include both regularly spaced grids and irregularly distributed point sets generated using Halton sequences, so that the robustness of the algorithms can be examined under a variety of sampling patterns. We also present plots of the distribution of the point-wise absolute error over the domain.

\subsection{Behaviour in smooth regions}

In this experiment we analyse the performance of the proposed data-dependent Shepard interpolation when the function is smooth. As a reference example, we consider
\begin{equation}\label{funcion_f}
f(x)=1+\sin(\pi x), \qquad x\in\mathbb{R},
\end{equation}
which is analytic in the whole real line. The accuracy is checked on the interval $[0,1]$.

Two interpolation strategies are compared, using the following notation:
\begin{itemize}
    \item $\textsc{Shepard}_{\mathcal H}$: classical Shepard method with kernel $\mathcal H$,
    \item $\textsc{NL\!-\!Shepard}_{\mathcal H}$: data-dependent variant incorporating smoothness-based adaptation of the weights.
\end{itemize}

The subscript $\mathcal H\in\{\text{G},\text{W2},\text{W4},\text{M2},\text{M4}\}$ identifies the radial kernel employed in the Shepard weights (Gaussian, Wendland $\mathcal{C}^2$ or $\mathcal{C}^4$, and Mat\'ern $\mathcal{C}^2$ or $\mathcal{C}^4$, respectively), following the list in Table~\ref{tabla1nucleos}.

Let \(\{z_j\}_{j=0}^{m}\) be an evaluation grid on the interval \([0,1]\), which may be uniform or non-uniform. At each evaluation point \(z_j\), we compute the values of the linear (\ref{shepard1}) and data-dependent (\ref{int_shepard}) Shepard approximations, denoted generically by \(\mathcal{S}(z_j)\). To analyze the pointwise distribution of the interpolation error, we define
\begin{equation}\label{error}
e_j = \big|\, f(z_j) - \mathcal{S}(z_j)\,\big|.
\end{equation}
For the classical method, the shape parameter $\varepsilon$ associated with the kernel is kept fixed and chosen so as to guarantee both numerical stability and good approximation properties.  
The values used in this test are
\begin{equation}\label{vareps}
\varepsilon = 
\begin{cases}
1/h, & \text{for Gaussian},\\[4pt]
0.3/h, & \text{for Wendland W2, W4},\\[4pt]
2/h, & \text{for Mat\'ern M2},\\[4pt]
3/h, & \text{for Mat\'ern M4},
\end{cases}
\end{equation}
where $h$ denotes the grid spacing for uniform nodes, or the fill distance in the Halton case.  
For the data-dependent Shepard version, the same baseline value is adopted, and then modified using the smoothness indicator described in previous sections.  

The Halton sets are generated with the Matlab instruction
\begin{lstlisting}	
p = haltonset(1,'Skip',0,'Leap',38);
\end{lstlisting}

The interpolation centers coincide with the original data sites. In the adaptive rules we set $c = 10^{-16}$ in (\ref{tildegamma_new}) and $C = 1$, $t = 1$ in (\ref{gx}). 

In the numerical experiments, we begin with an initial set of \(32\) data points on the interval \([0,1]\). Between each pair of consecutive initial points, we insert \(20\) additional evaluation points, which may be uniformly or non-uniformly distributed, obtaining in this way a refined grid. As mentioned before, at each evaluation point \(z_j\), we compute the values of both the classical Shepard interpolant and the data-dependent Shepard-type interpolant introduced in this work, and then compute the point-wise absolute value of the error through the expression in (\ref{error}) to obtain the physical distribution of the error over the domain.  Figure \ref{exp1_sm} presents experiments for gridded data and Figure \ref{exp2_sm} for Halton points. In both cases, the grids follow the descriptions given before. In all the experiments presented in Figures \ref{exp1_sm} and \ref{exp2_sm}, we display the classical Shepard approximation using a blue solid line, our data-dependent approximation using a black solid line, and the original sampled data using red points. The corresponding error plots show the point-wise absolute error for both methods: the blue curve represents the error of the classical Shepard approximation, while the red curve represents the error obtained by our algorithm. As it can be seen in Figures \ref{exp1_sm} and \ref{exp2_sm}, for smooth test functions, the errors produced by both approaches are virtually identical.

\begin{figure}[htbp!]
\begin{center}
		\begin{tabular}{cc}
	\includegraphics[width=5.2cm]{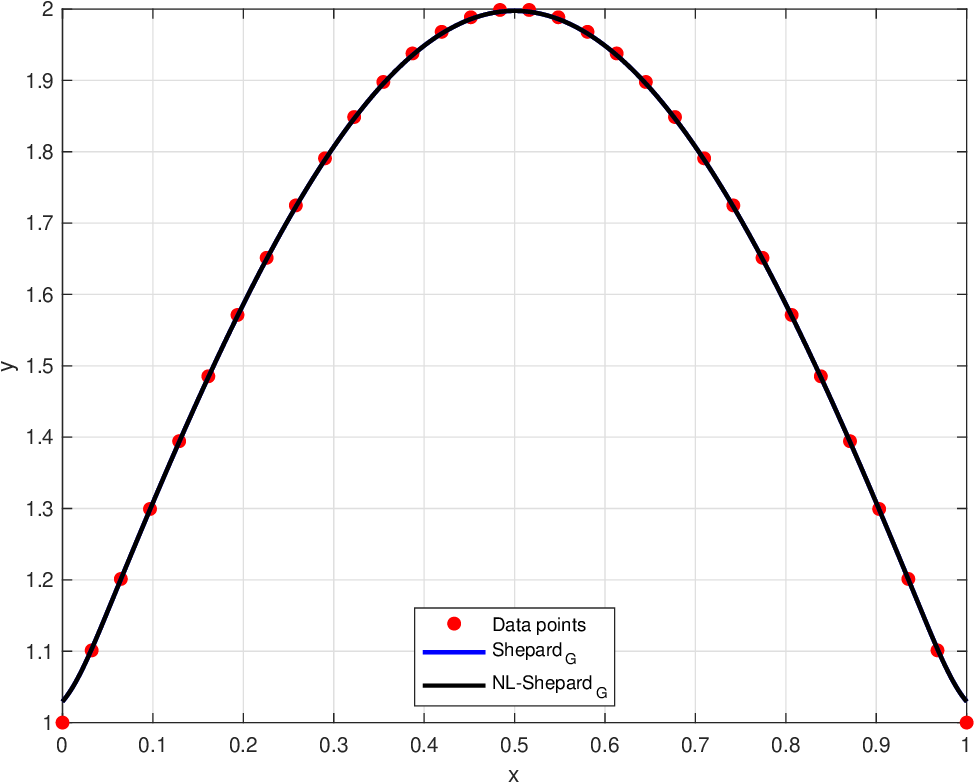} & 	\includegraphics[width=5.2cm]{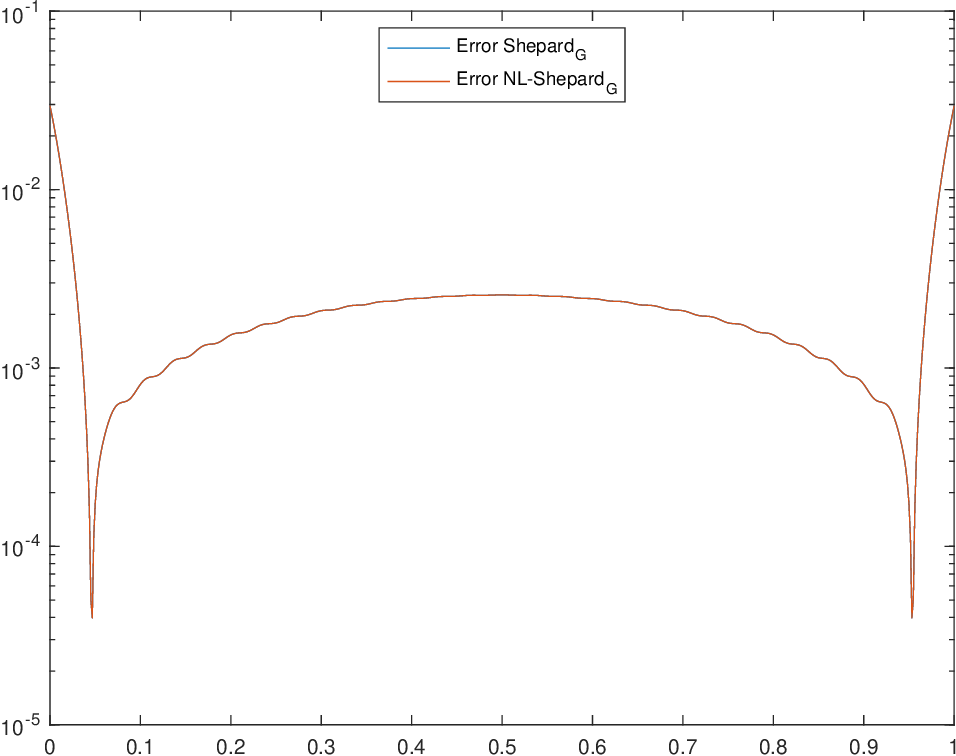}\\
	\includegraphics[width=5.2cm]{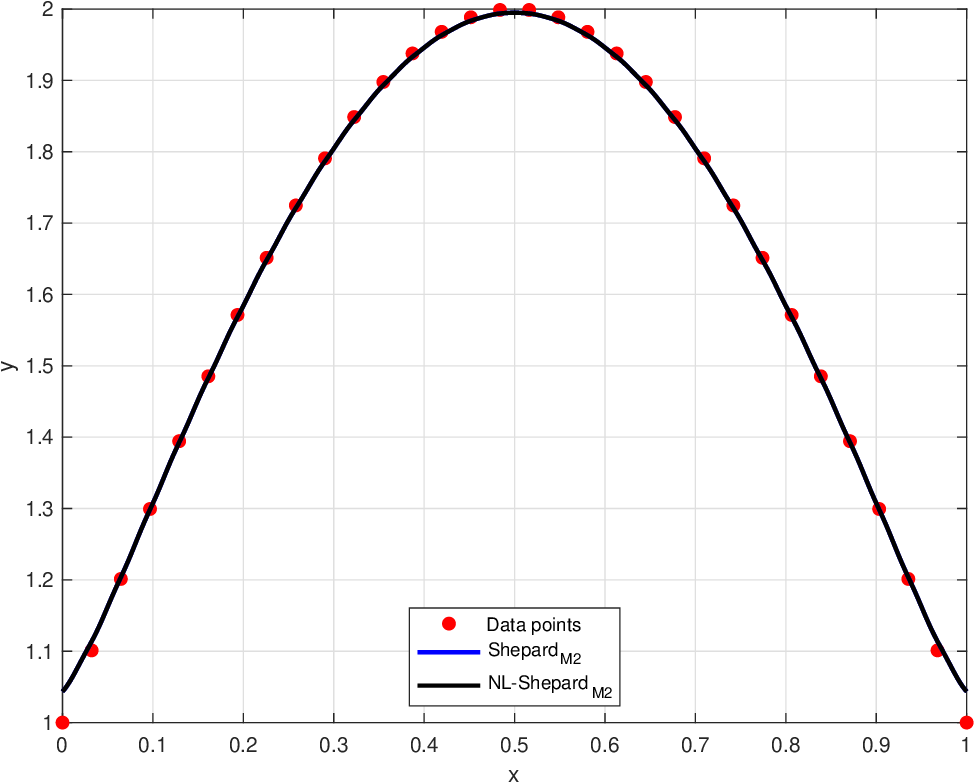} & 	\includegraphics[width=5.2cm]{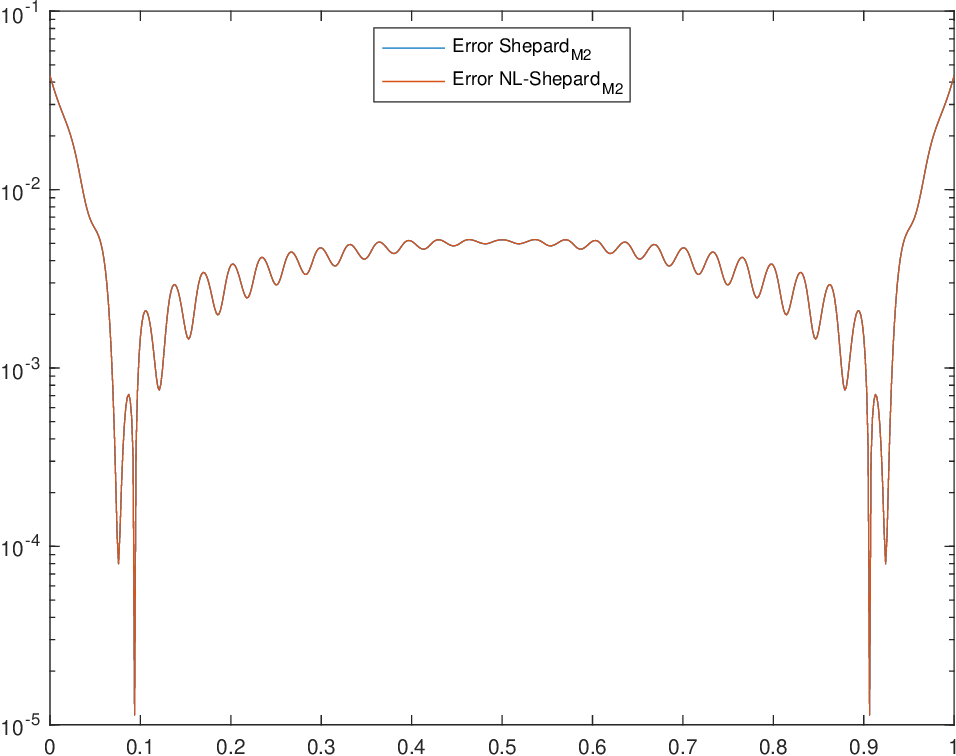}\\
	\includegraphics[width=5.2cm]{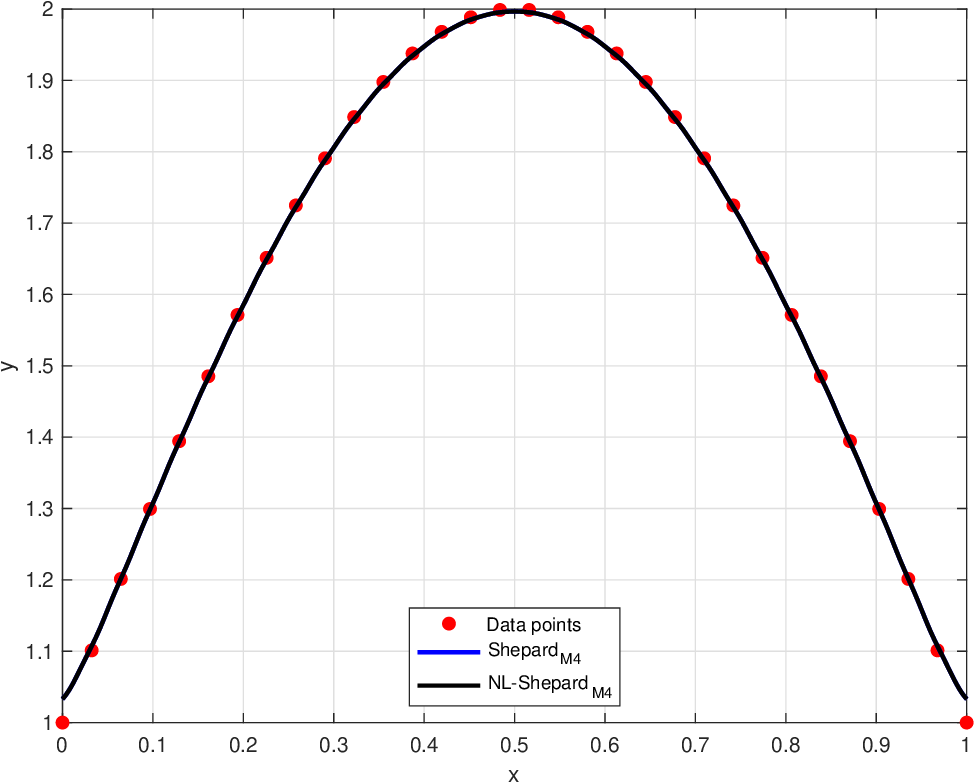} & 	\includegraphics[width=5.2cm]{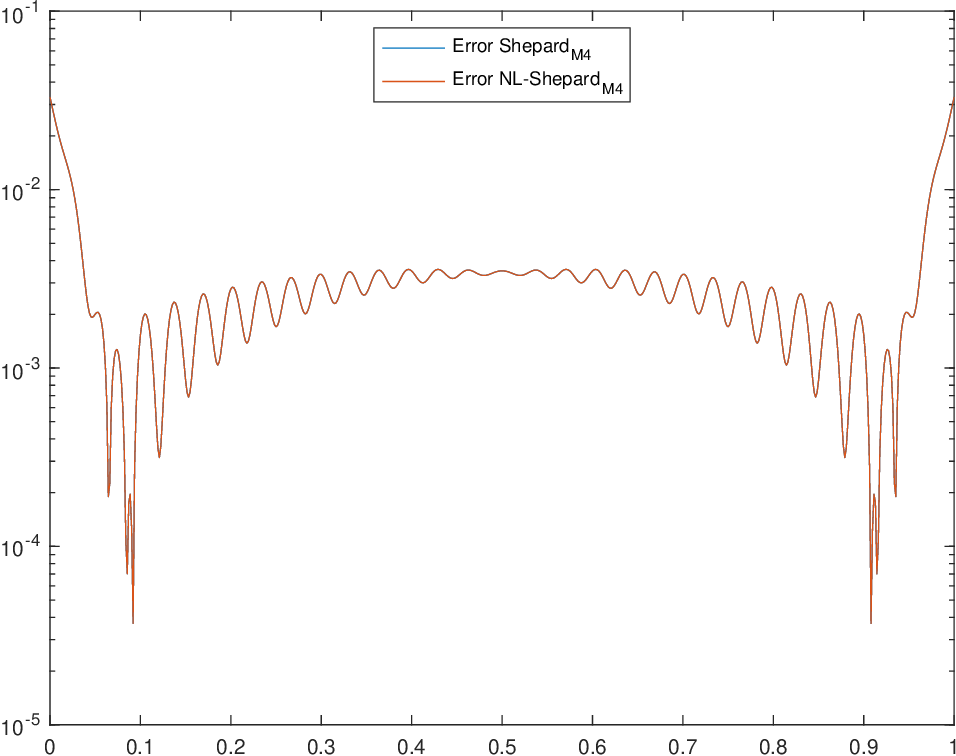}\\
	\includegraphics[width=5.2cm]{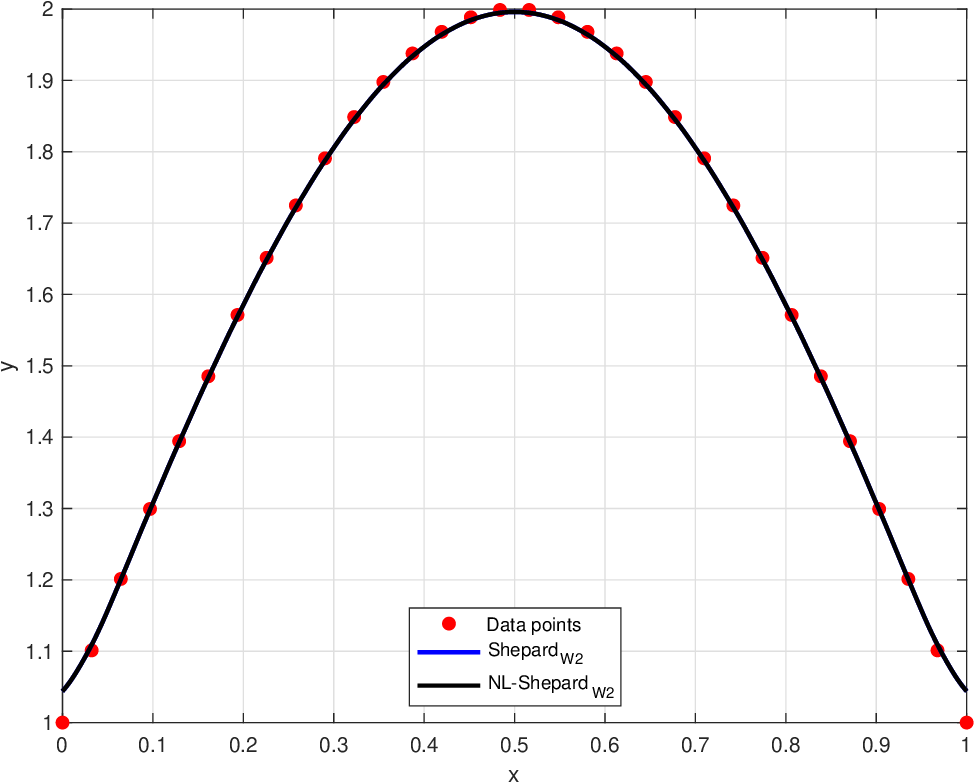} & 	\includegraphics[width=5.2cm]{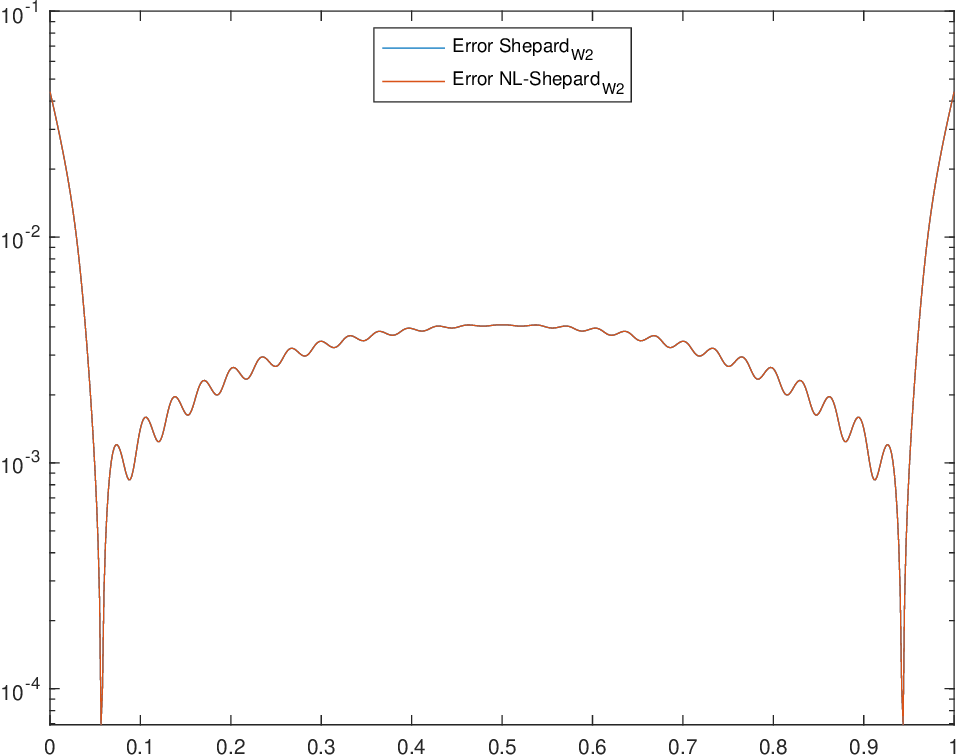}\\
		\includegraphics[width=5.2cm]{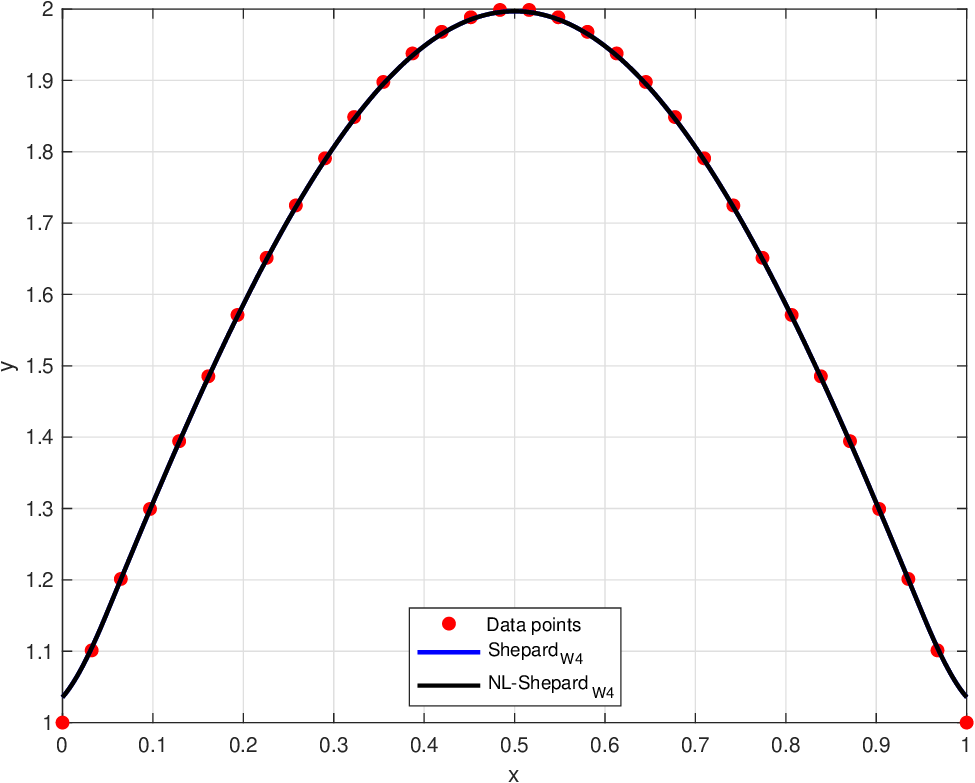} & 	\includegraphics[width=5.2cm]{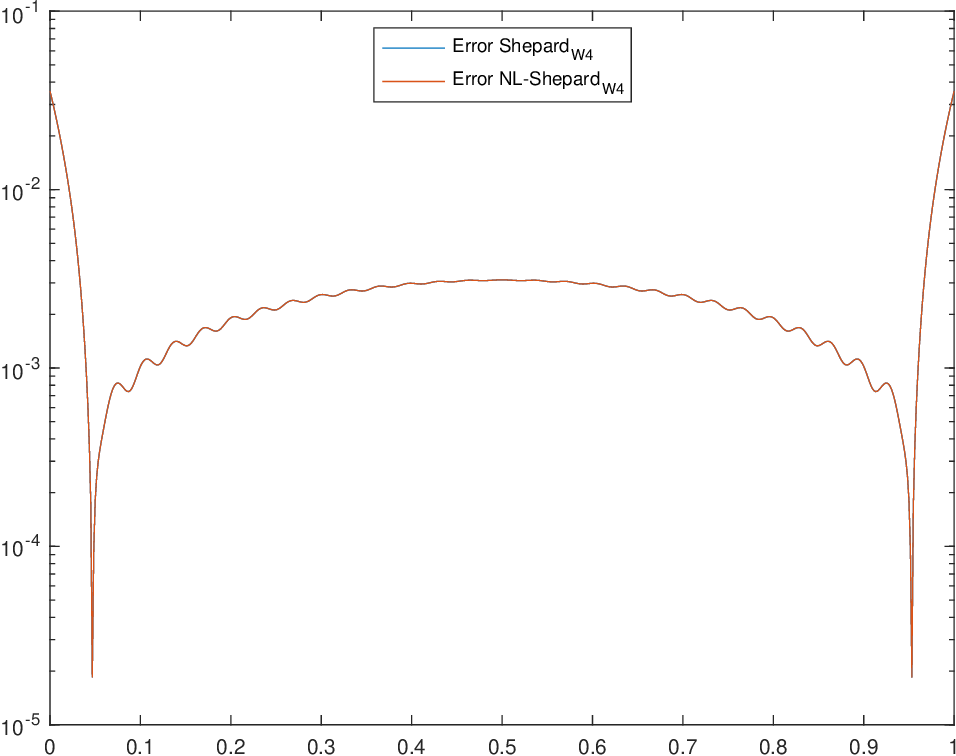}
		\end{tabular}
\end{center}
			\caption{Approximation to the function $f$, Eq. \eqref{funcion_f}, using a uniform grid of $n=32$ points (red dots) and generating 20 evaluation points between each of them. In each plot, the classical Shepard (blue solid line) and data-dependent (black solid line) Shepard algorithms have been used. The first column presents the approximation obtained by the algorithms. The second column presents the point-wise absolute error obtained in a semilogarithmic scale. From top to bottom these algorithms are: Shepard$_{\text{G}}$ and NL-Shepard$_{\text{G}}$, Shepard$_{\text{W2}}$ and  NL-Shepard$_{\text{W2}}$,  Shepard$_{\text{W4}}$ and  NL-Shepard$_{\text{W4}}$,  Shepard$_{\text{M2}}$ and NL-Shepard$_{\text{M2}}$, Shepard$_{\text{M4}}$ and NL-Shepard$_{\text{M4}}$.}
		\label{exp1_sm}
	\end{figure}

	\begin{figure}[htbp!]
\begin{center}
		\begin{tabular}{cc}
	\includegraphics[width=5.2cm]{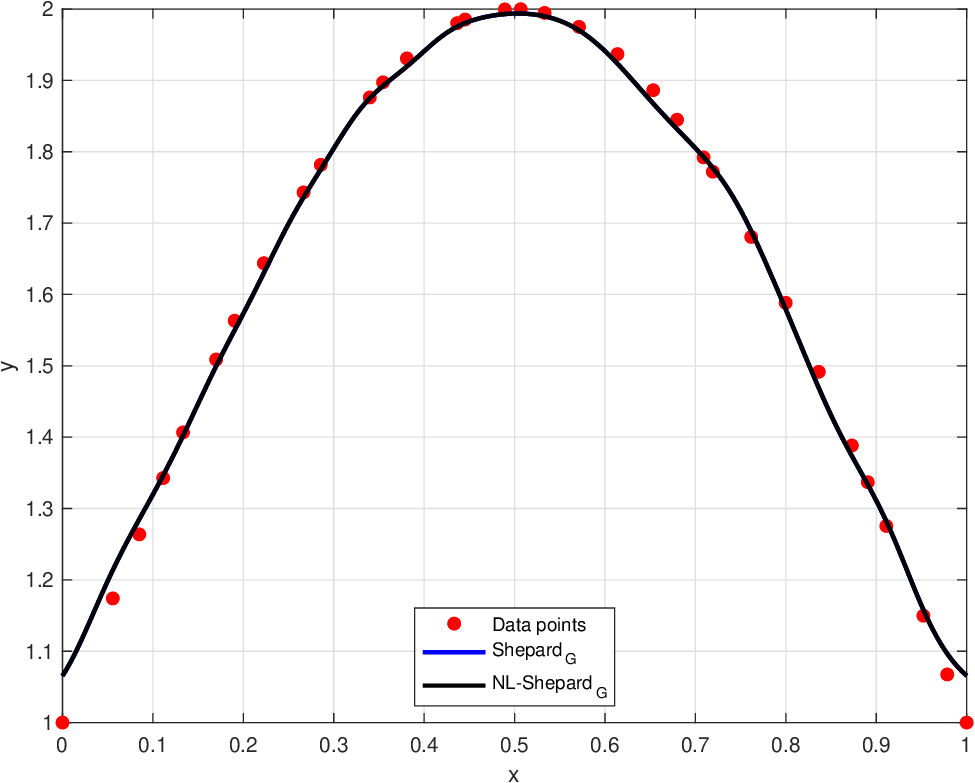} & 	\includegraphics[width=5.2cm]{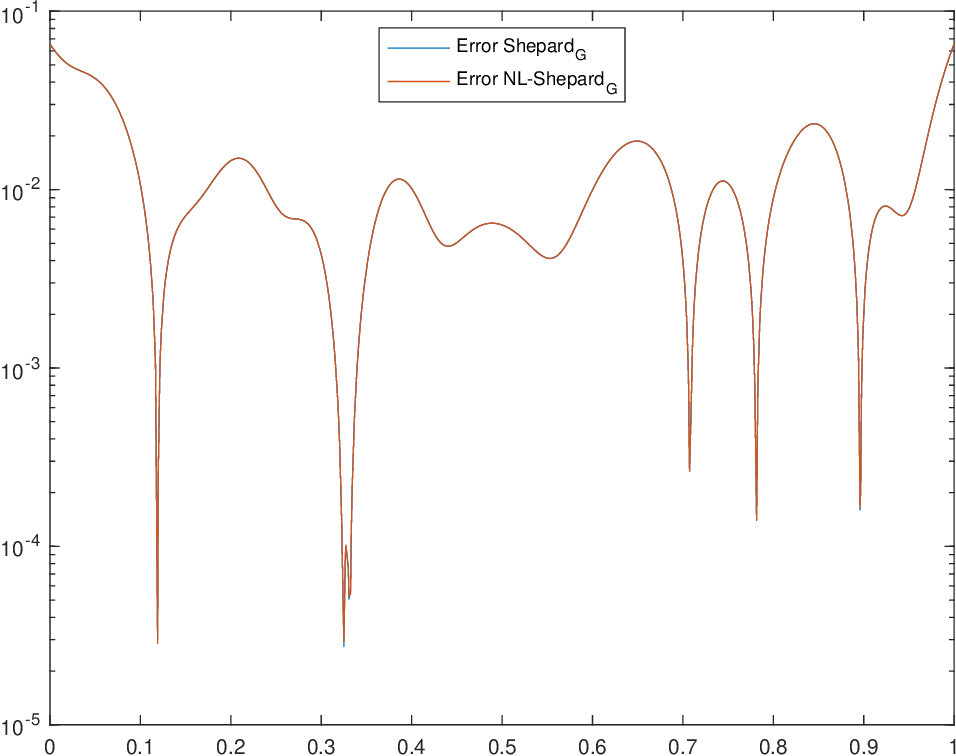}\\
	\includegraphics[width=5.2cm]{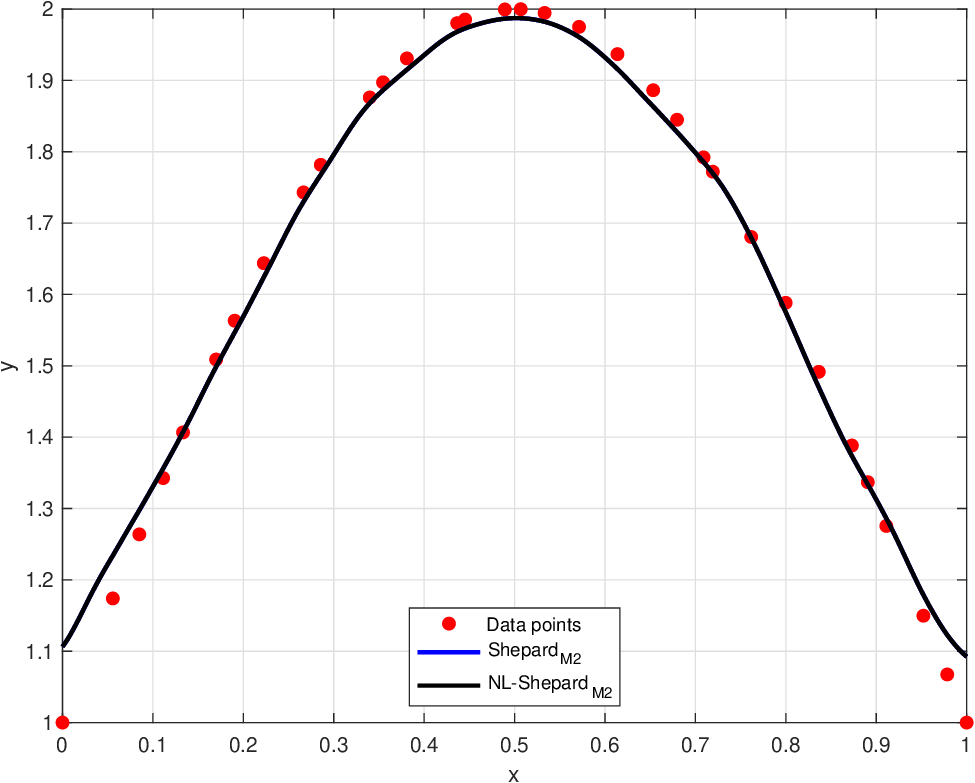} & 	\includegraphics[width=5.2cm]{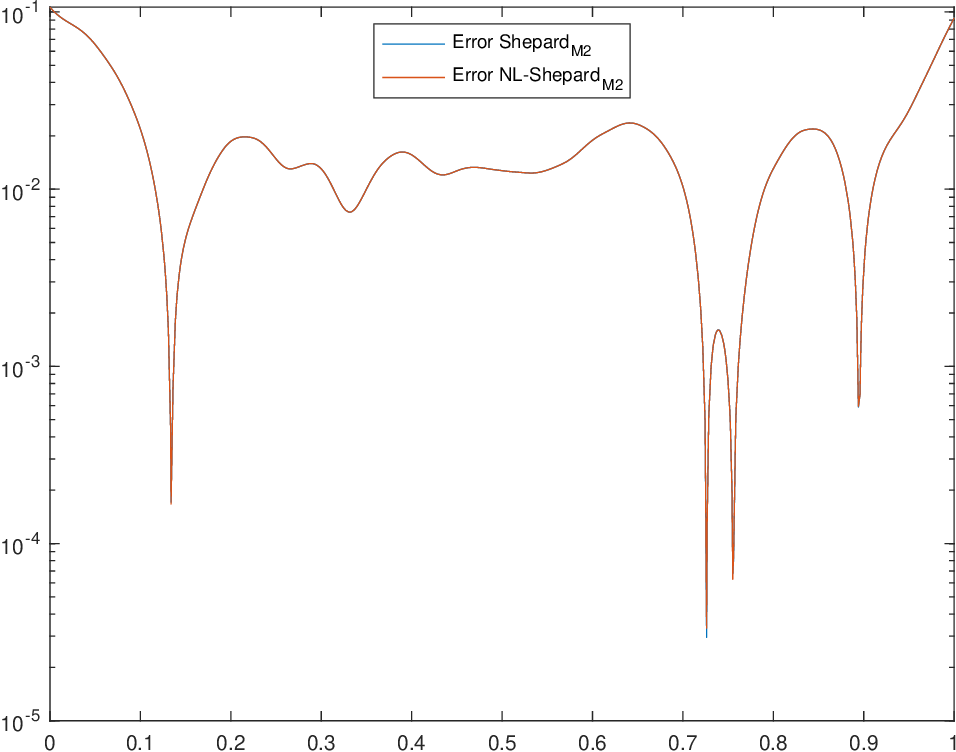}\\
	\includegraphics[width=5.2cm]{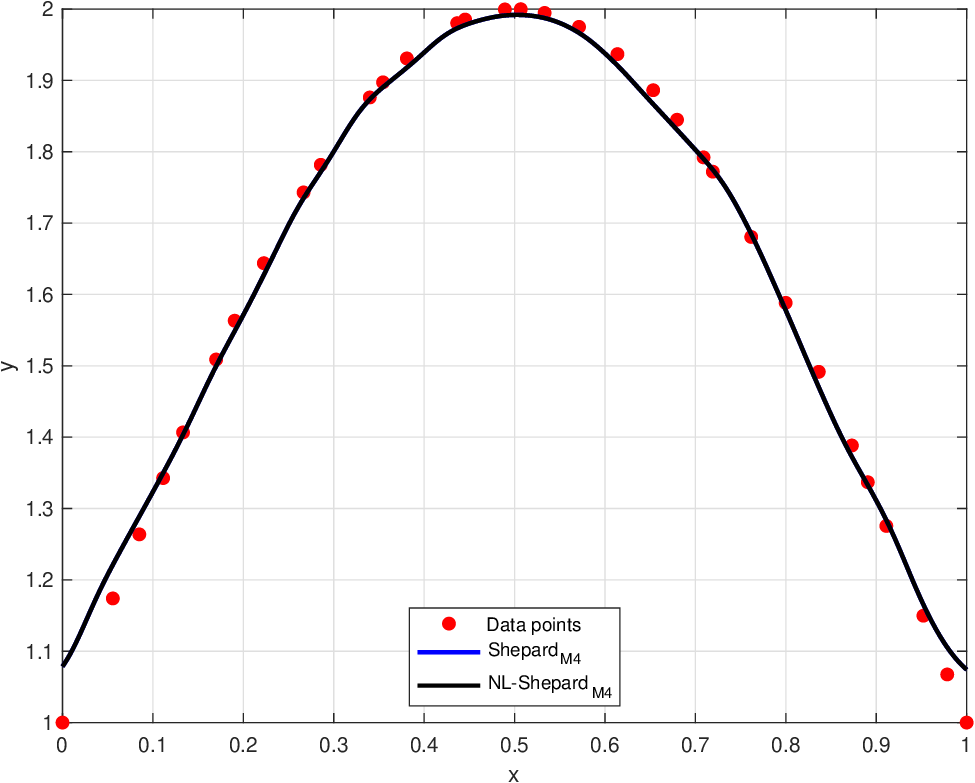} & 	\includegraphics[width=5.2cm]{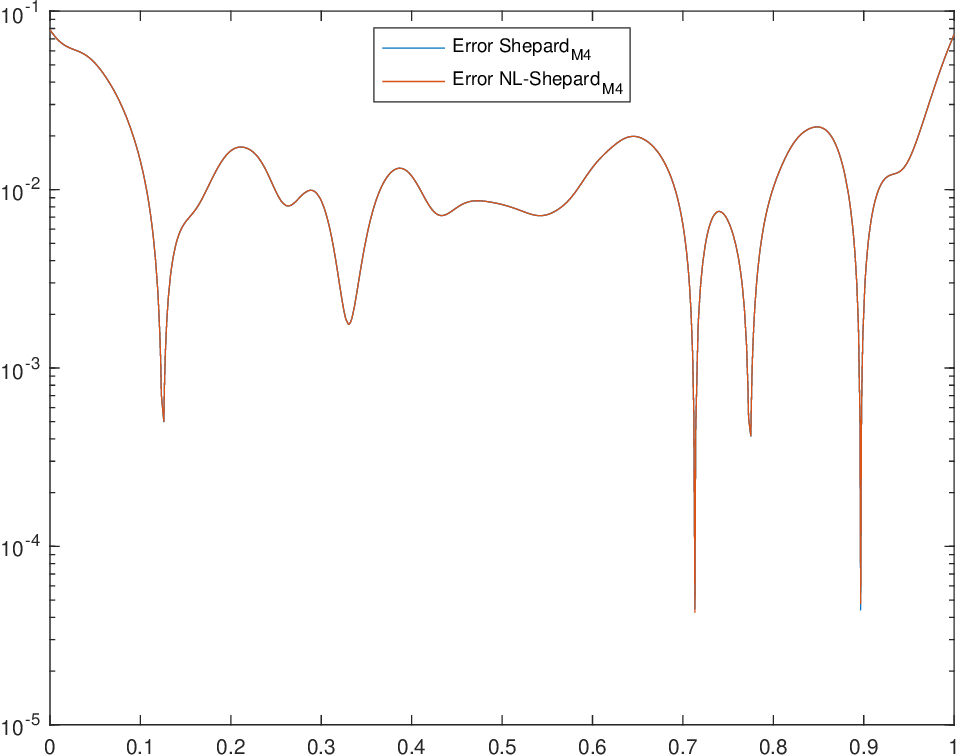}\\
	\includegraphics[width=5.2cm]{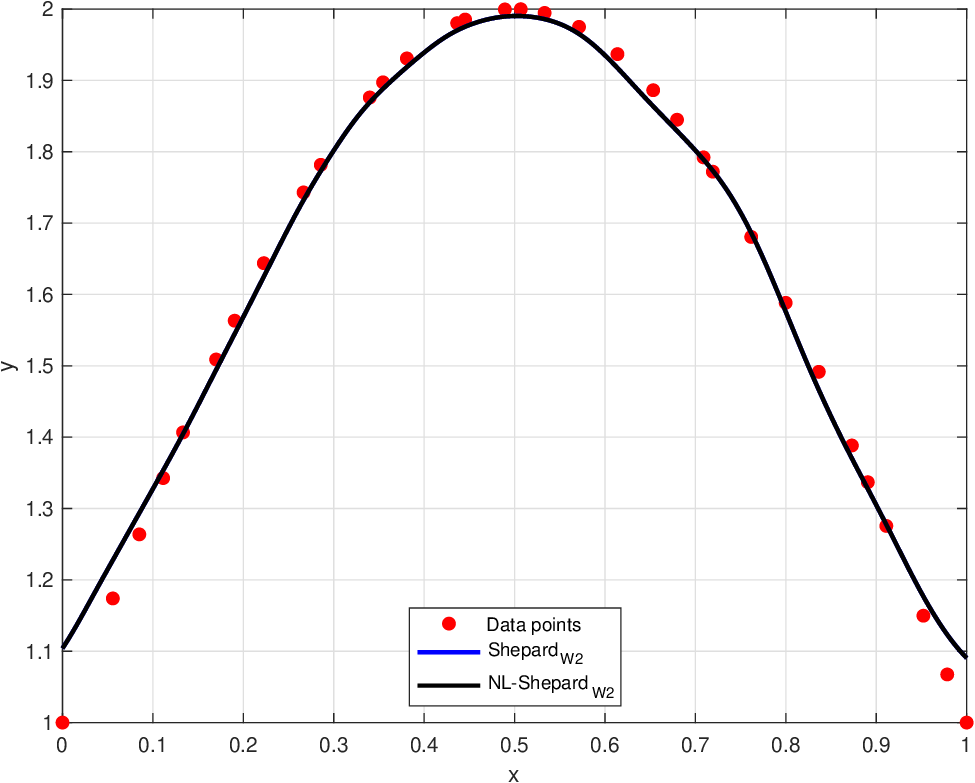} & 	\includegraphics[width=5.2cm]{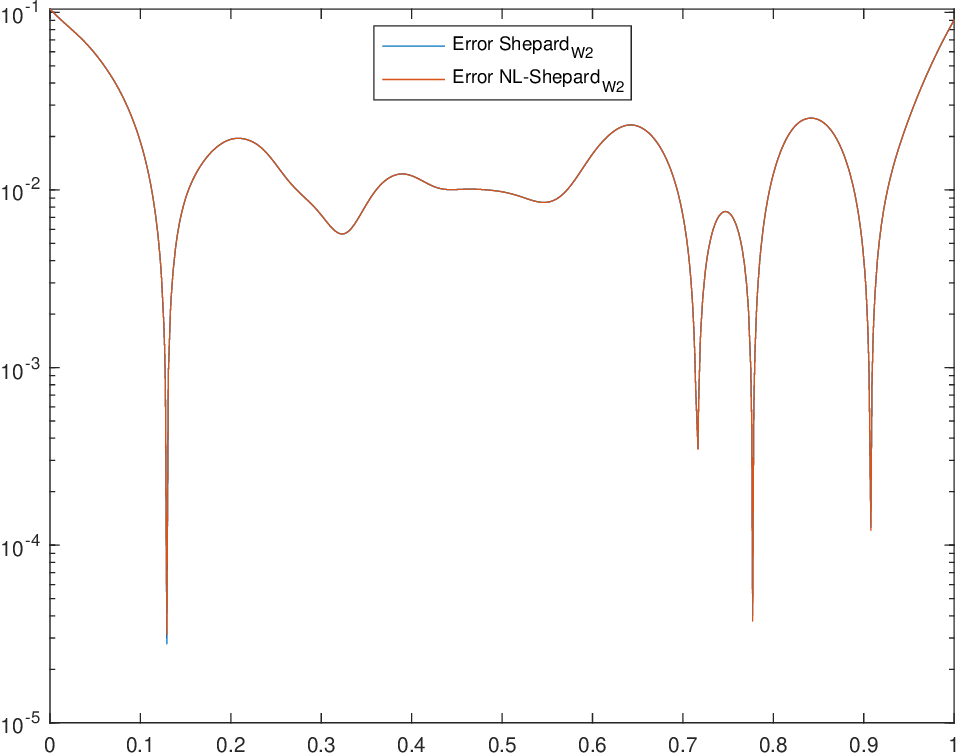}\\
		\includegraphics[width=5.2cm]{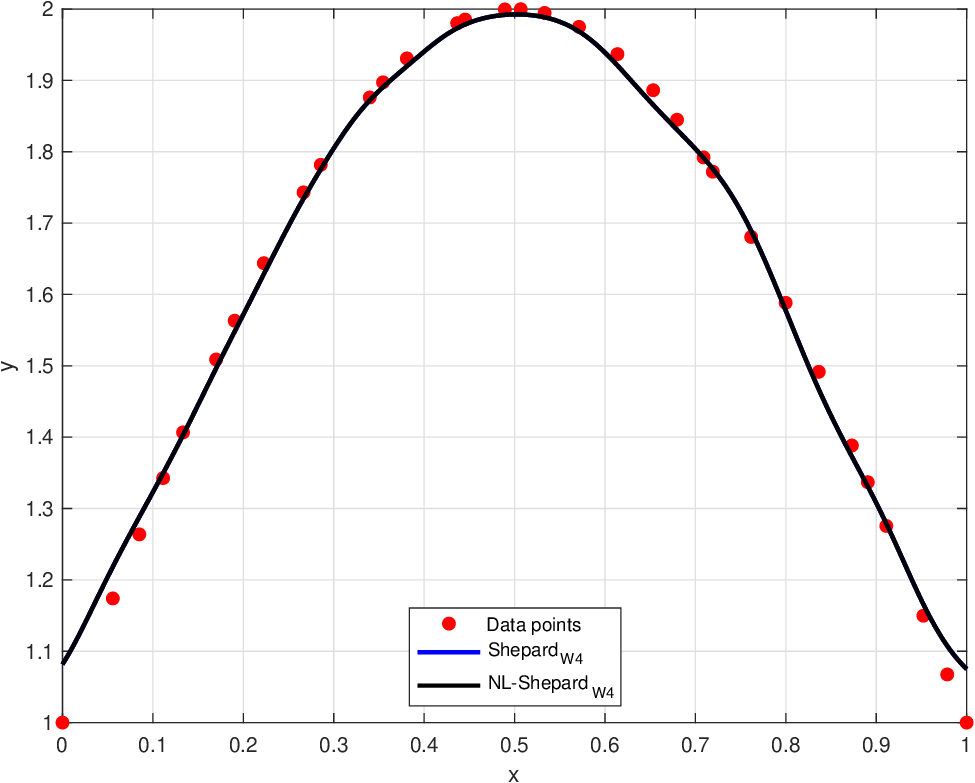} & 	\includegraphics[width=5.2cm]{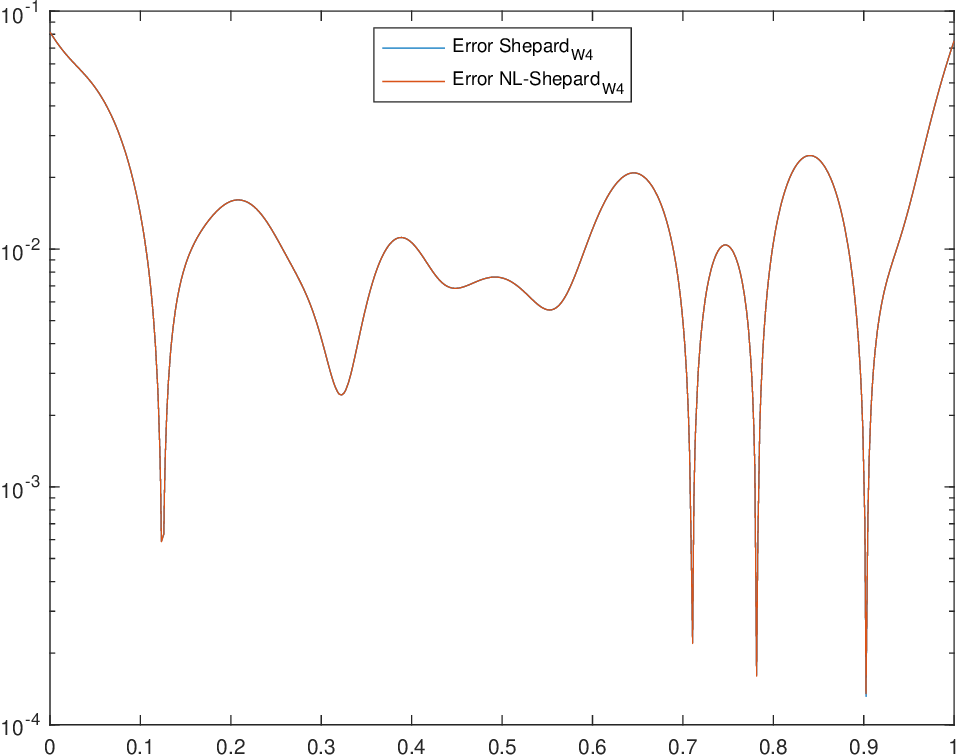}
		\end{tabular}
\end{center}
			\caption{Approximation to the function $f$, Eq. \eqref{funcion_f}, using a grid of $n=32$ Halton points (red dots) and generating 20 Halton evaluation points between each of them. In each plot, the classical Shepard (blue solid line) and data-dependent (black solid line) Shepard algorithms have been used. The first column presents the approximation obtained by the algorithms. The second column presents the point-wise absolute error obtained in a semilogarithmic scale. From top to bottom these algorithms are: Shepard$_{\text{G}}$ and NL-Shepard$_{\text{G}}$, Shepard$_{\text{W2}}$ and  NL-Shepard$_{\text{W2}}$,  Shepard$_{\text{W4}}$ and  NL-Shepard$_{\text{W4}}$,  Shepard$_{\text{M2}}$ and NL-Shepard$_{\text{M2}}$, Shepard$_{\text{M4}}$ and NL-Shepard$_{\text{M4}}$.}\label{exp2_sm}
	\end{figure}

\subsection{Mitigation of oscillations near jump discontinuities for univariate data}

In this subsection, we study the behavior of Shepard-type interpolation when the target function exhibits a discontinuity. The test function is given by
\begin{equation}\label{funciong}
g(x)=
\begin{cases}
\sin(\pi x), & x \le 2/3,\\[2mm]
1 - \sin(\pi x), & x > 2/3,
\end{cases}
\end{equation}
which is smooth on each subinterval but presents a jump at $x=2/3$. The approximations are computed on the whole interval $[0,1]$. As initial data sites we select $N = 32$ nodes, either equally spaced or generated via Halton points, as in the previous subsection. Between any two consecutive data sites we introduce 20 auxiliary evaluation points.

As in the previous section, the shape parameter $\varepsilon$ controlling the Shepard weights is chosen so that the \emph{classical} Shepard scheme yields a reasonable approximation. In the experiments based on uniform grids and on Halton points (see Figures~\ref{exp1} and~\ref{exp2}), we use the parameter values specified in (\ref{vareps}). The data-dependent Shepard formulations employ the same initial shape parameter, which is subsequently modified following the adaptive procedure introduced in the preceding sections, relying on local smoothness indicators. The interpolation centers coincide again with the original data sites. As before, in the adaptive method we set $c = 10^{-16}$ in (\ref{tildegamma_new}) and $C = 1$, $t = 1$ in (\ref{gx}). The Halton nodes are identical to those used in previous experiments.

Figure~\ref{exp1} shows the results obtained with both the \emph{classical} (i.e., standard) Shepard interpolation and its \emph{data-dependent} adaptive counterpart for all kernels listed in Table~\ref{tabla1nucleos}, using uniformly spaced data. Again, the data sites are highlighted with red dots. The output of the classical Shepard method is depicted with a blue curve (notation following Table~\ref{tabla1nucleos}), while the data-dependent method is represented by a black curve.

Across all kernels, we can see that the data-dependent strategy decreases the smearing belt around the discontinuity that the classical Shepard interpolant exhibits near the jump. In the adaptive approach, a mild smoothing of the discontinuity is visible. 

Figure~\ref{exp2} presents the analogue of the previous experiment performed with non-uniform Halton data. The conclusions are consistent with the uniformly spaced case: the data-dependent Shepard scheme effectively reduces the smearing that appears in the classical method when the target function is discontinuous. This improvement is robust across all kernels in Table~\ref{tabla1nucleos}. As before, a small amount of smoothing at the discontinuity is unavoidable but limited.

	\begin{figure}[htbp!]
\begin{center}
		\begin{tabular}{cc}
	\includegraphics[width=5.2cm]{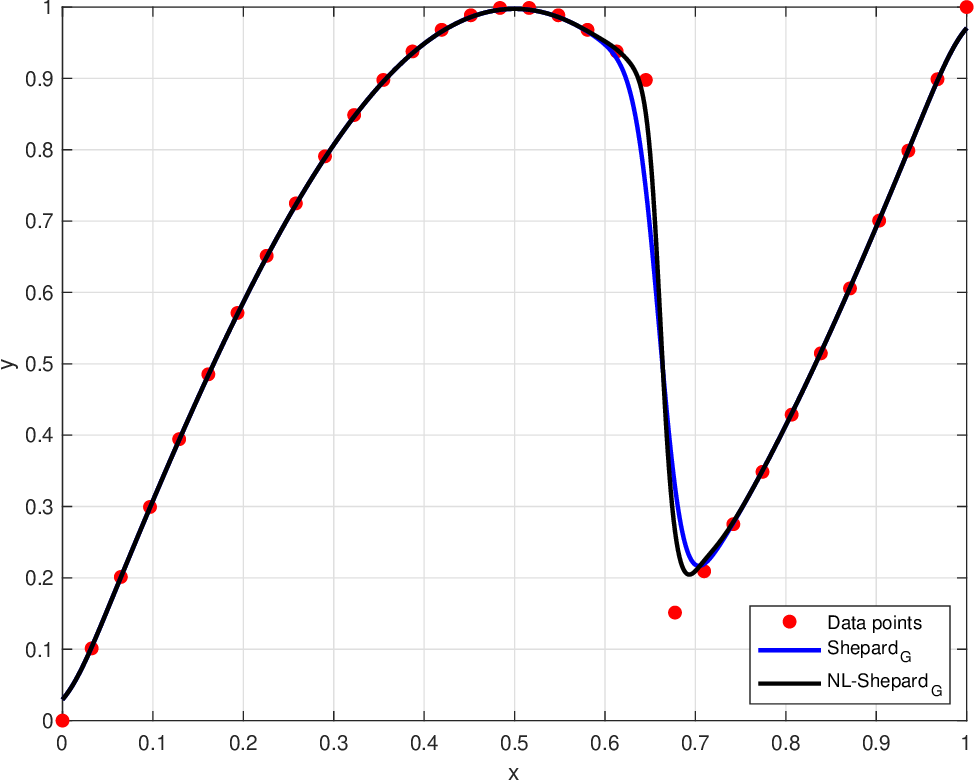} & 	\includegraphics[width=5.2cm]{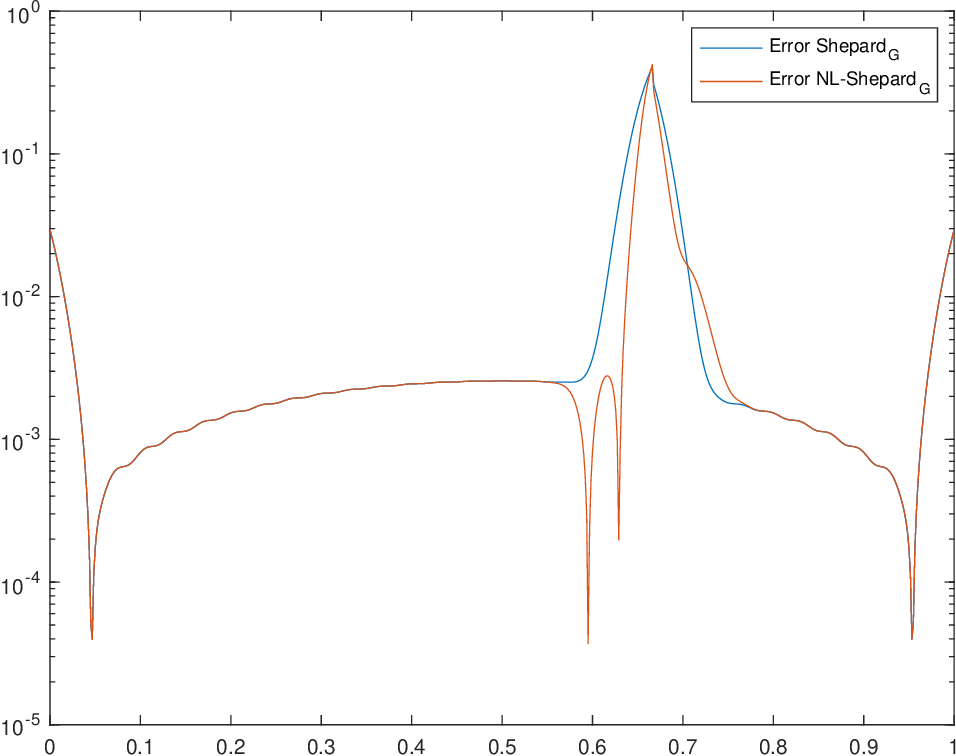}\\
	\includegraphics[width=5.2cm]{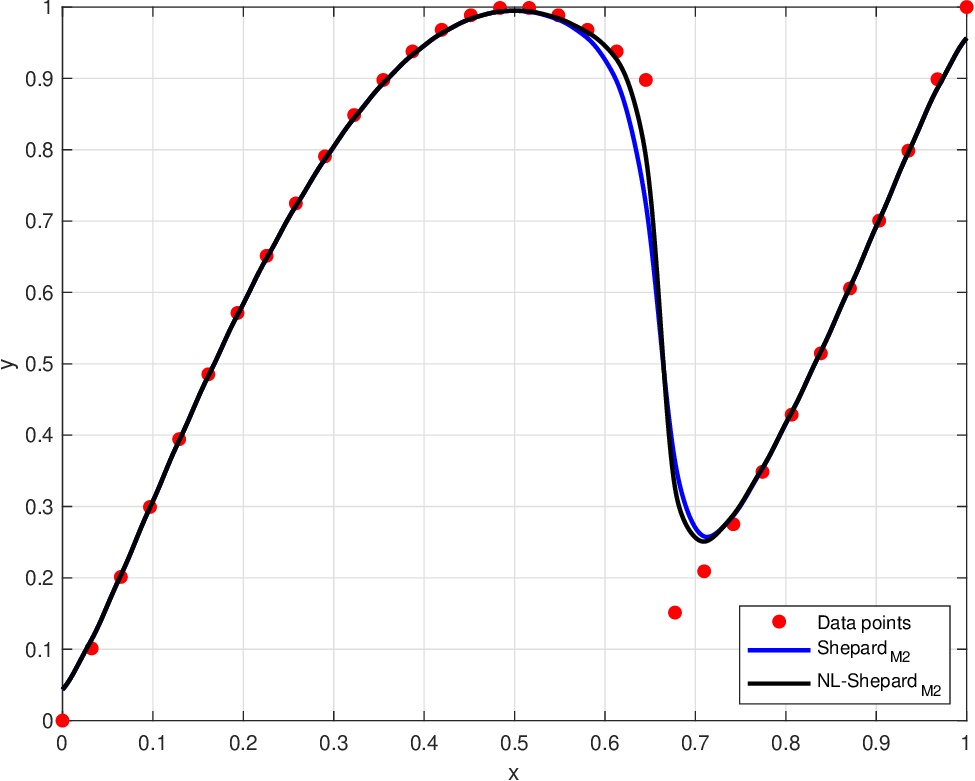} & 	\includegraphics[width=5.2cm]{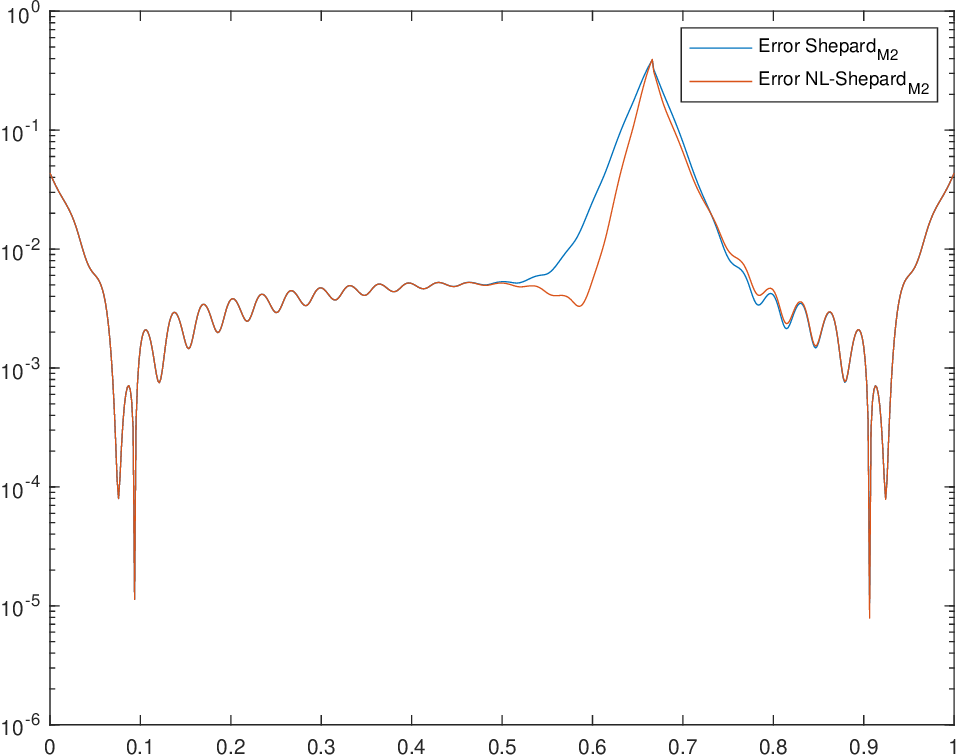}\\
	\includegraphics[width=5.2cm]{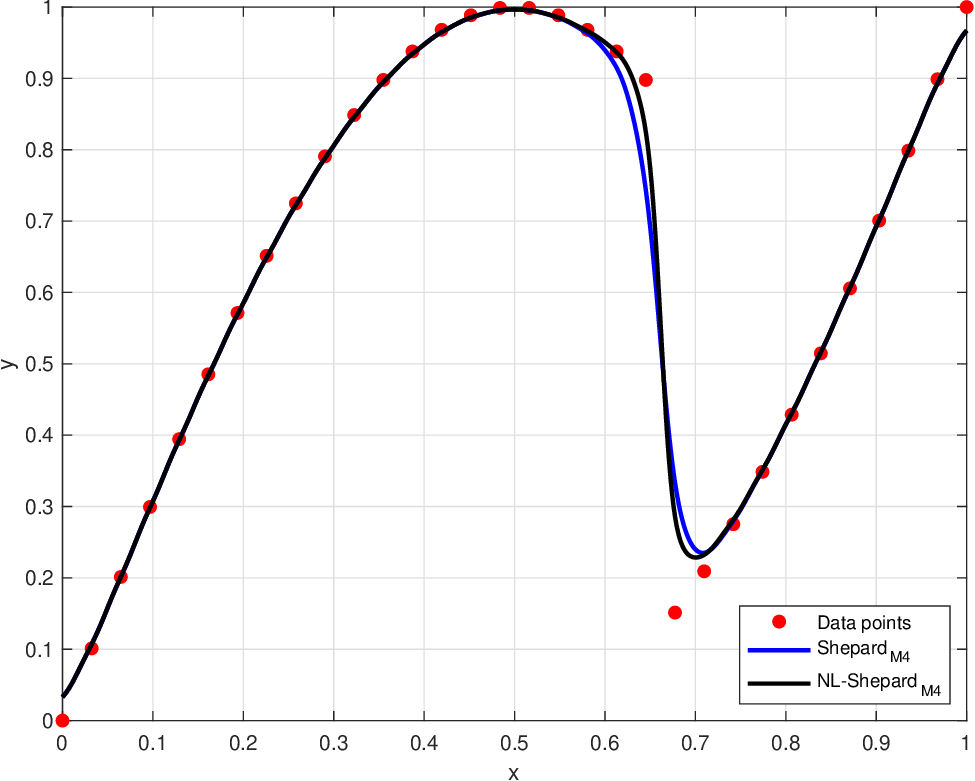} & 	\includegraphics[width=5.2cm]{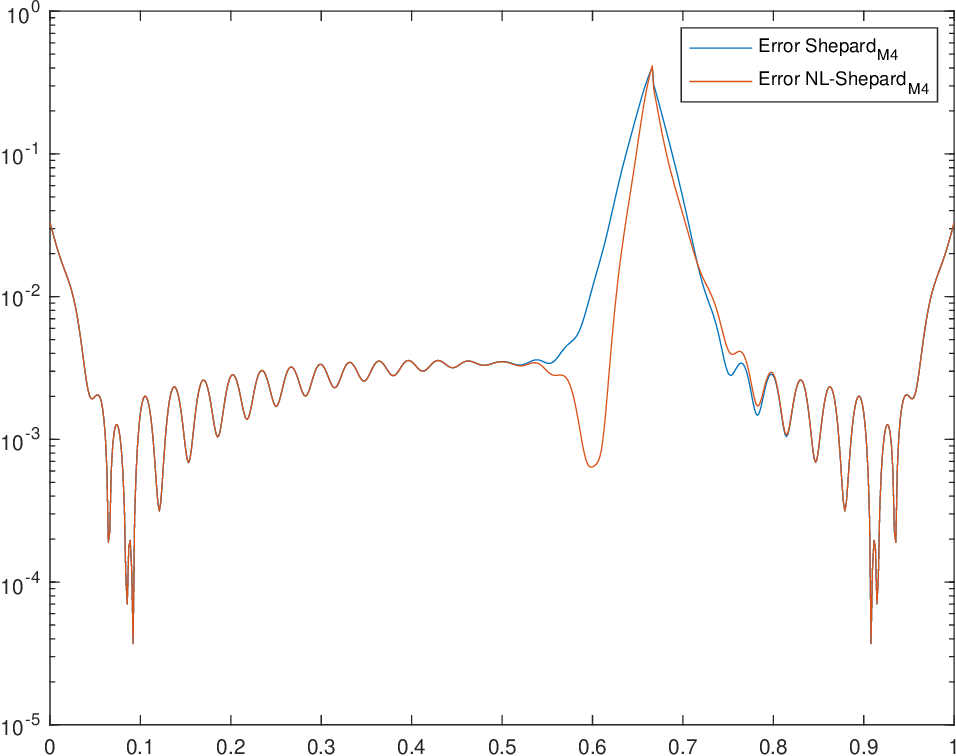}\\
	\includegraphics[width=5.2cm]{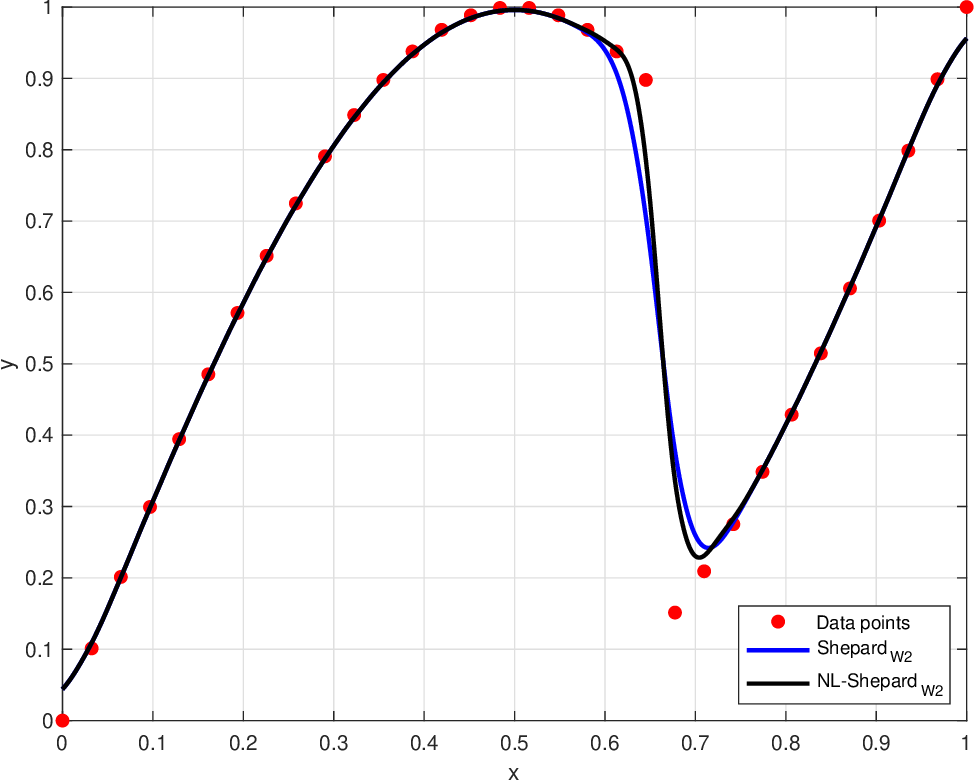} & 	\includegraphics[width=5.2cm]{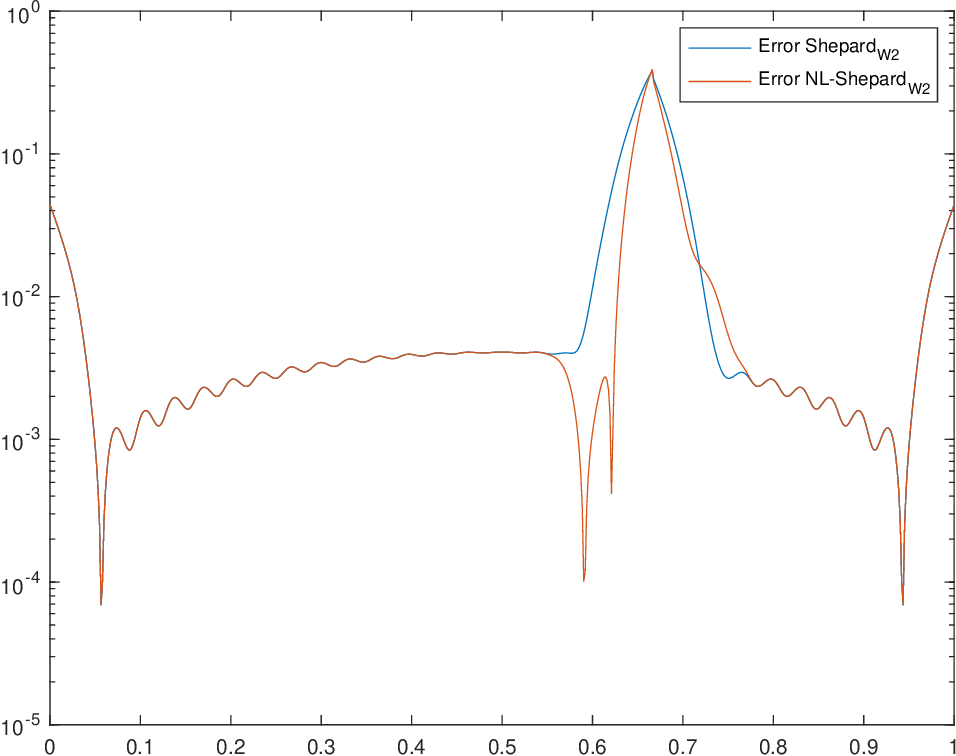}\\
		\includegraphics[width=5.2cm]{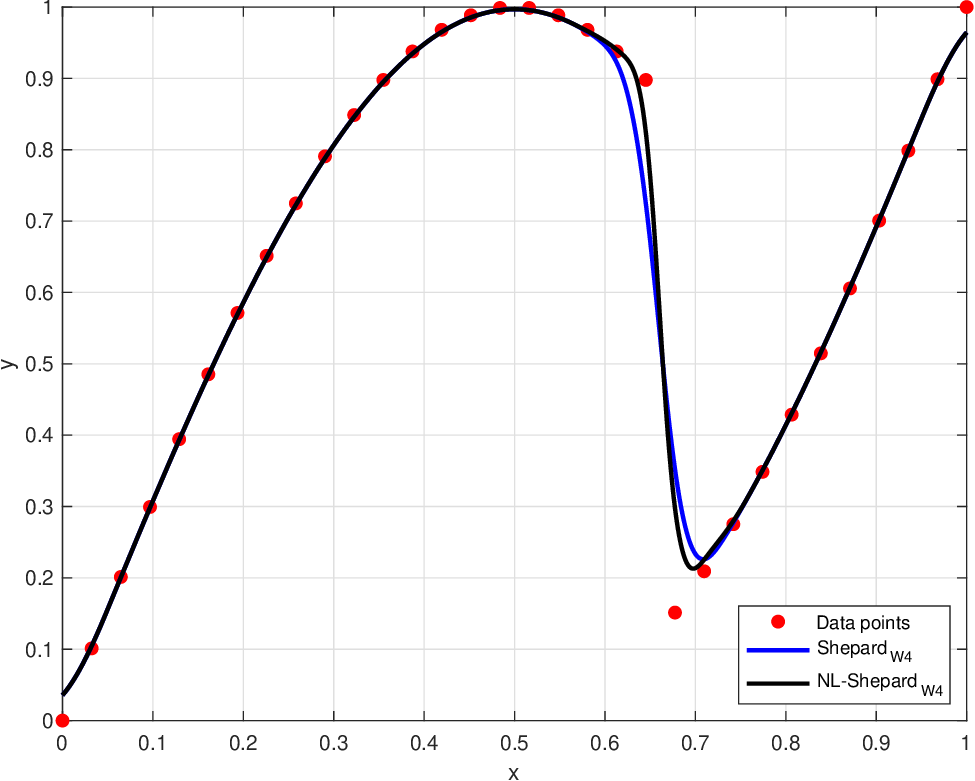} & 	\includegraphics[width=5.2cm]{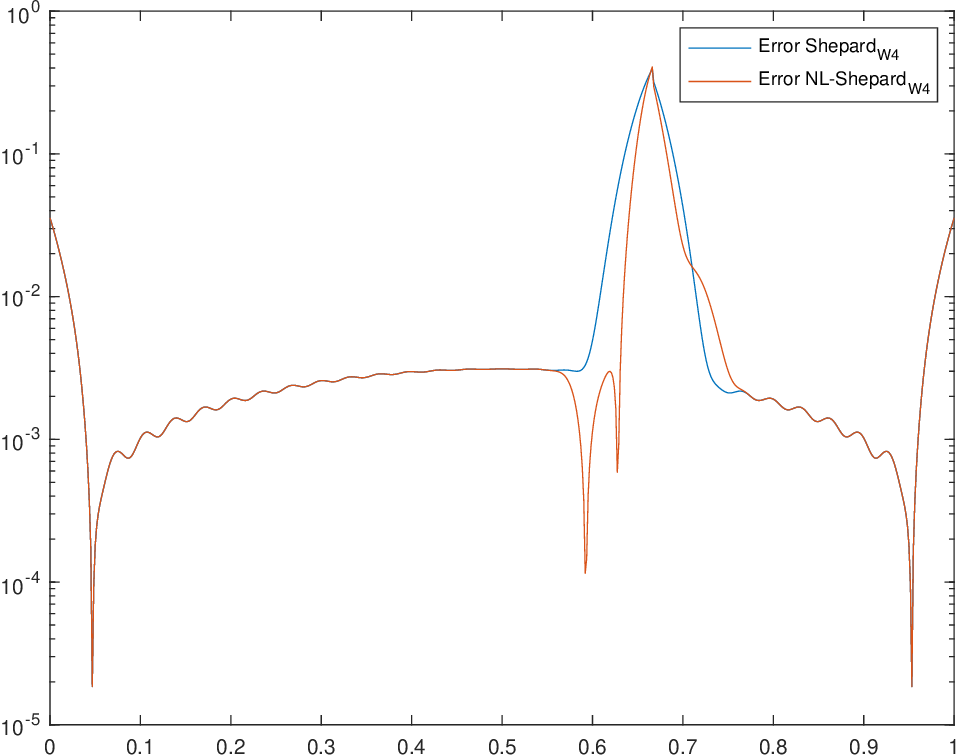}
		\end{tabular}
\end{center}
			\caption{Approximation to the function $g$ (red solid dots), Eq. \eqref{funciong}, using a uniform grid of $n=32$ points (red dots) and generating 20 evaluation points between each of them. In each plot, the classical (blue solid line) and data-dependent (black solid line) Shepard algorithms have been used. The first column presents the approximation obtained by the algorithms. The second column presents the error obtained in a semilogarithmic scale. In this case, the blue line is for the classical Shepard and the red line for the data-dependent. From top to bottom these algorithms are: Shepard$_{\text{G}}$ and Shepard$_{\text{G}}$, Shepard$_{\text{W2}}$ and  NL-Shepard$_{\text{W2}}$,  Shepard$_{\text{W4}}$ and  NL-Shepard$_{\text{W4}}$,  Shepard$_{\text{M2}}$ and NL-Shepard$_{\text{M2}}$, Shepard$_{\text{M4}}$ and NL-Shepard$_{\text{M4}}$.}
		\label{exp1}
	\end{figure}

	\begin{figure}[htbp!]
\begin{center}
		\begin{tabular}{cc}
	\includegraphics[width=5.2cm]{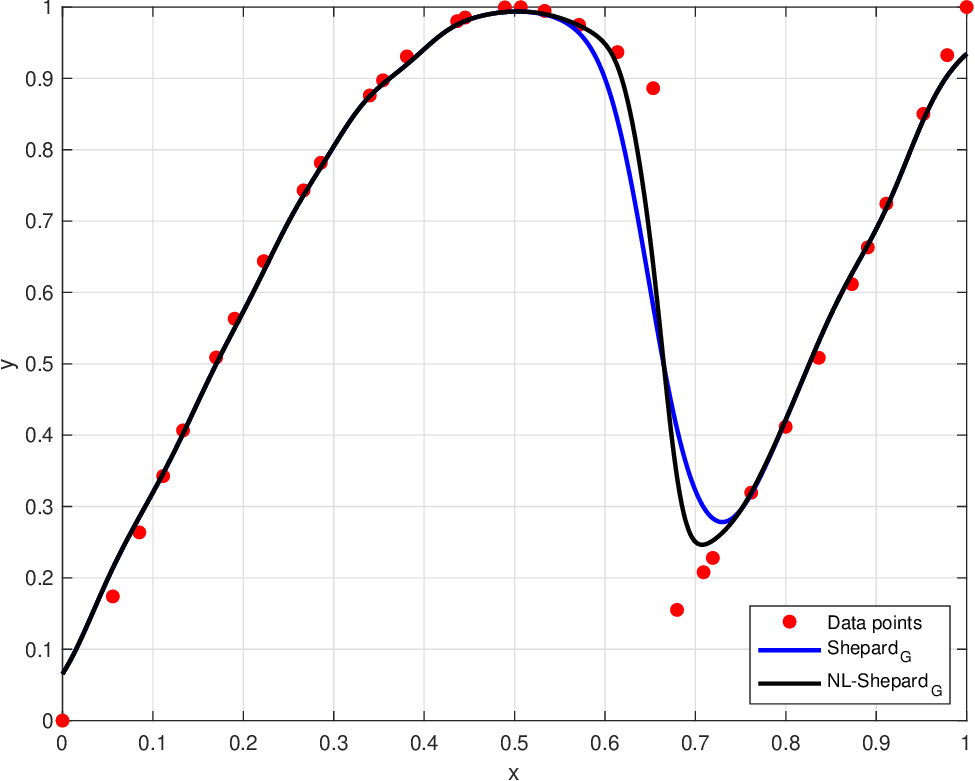} & 	\includegraphics[width=5.2cm]{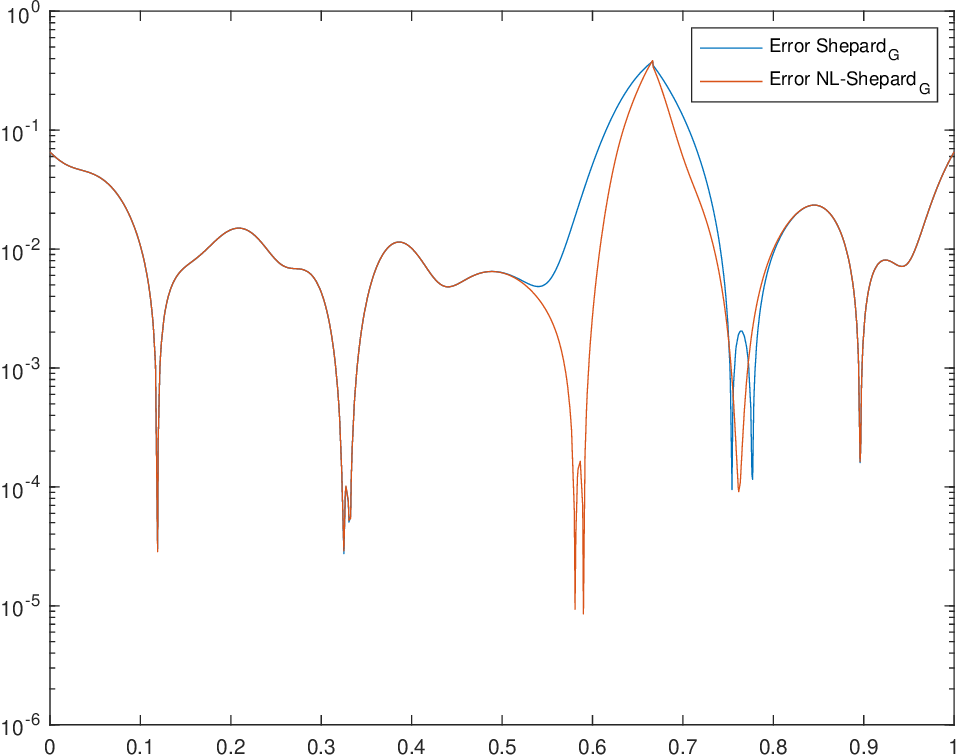}\\
	\includegraphics[width=5.2cm]{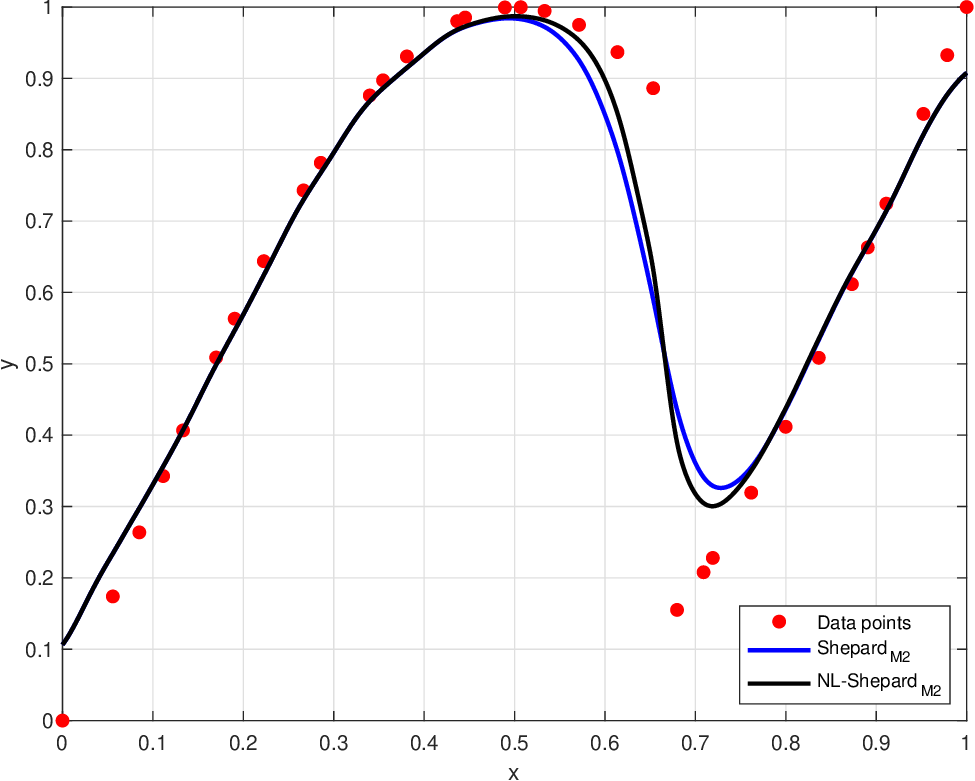} & 	\includegraphics[width=5.2cm]{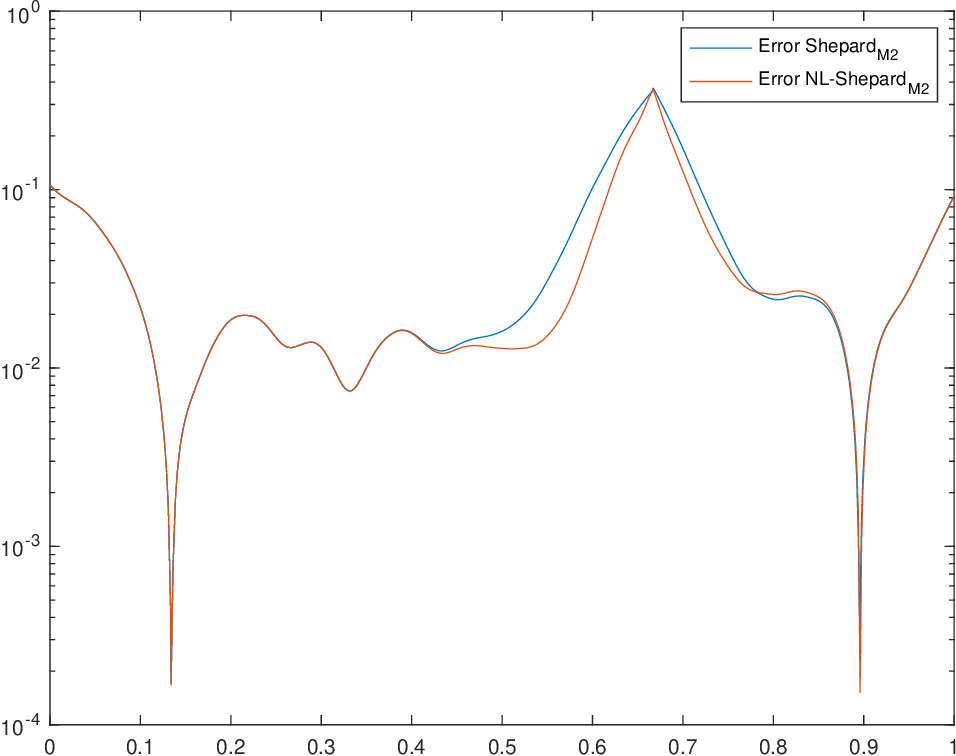}\\
	\includegraphics[width=5.2cm]{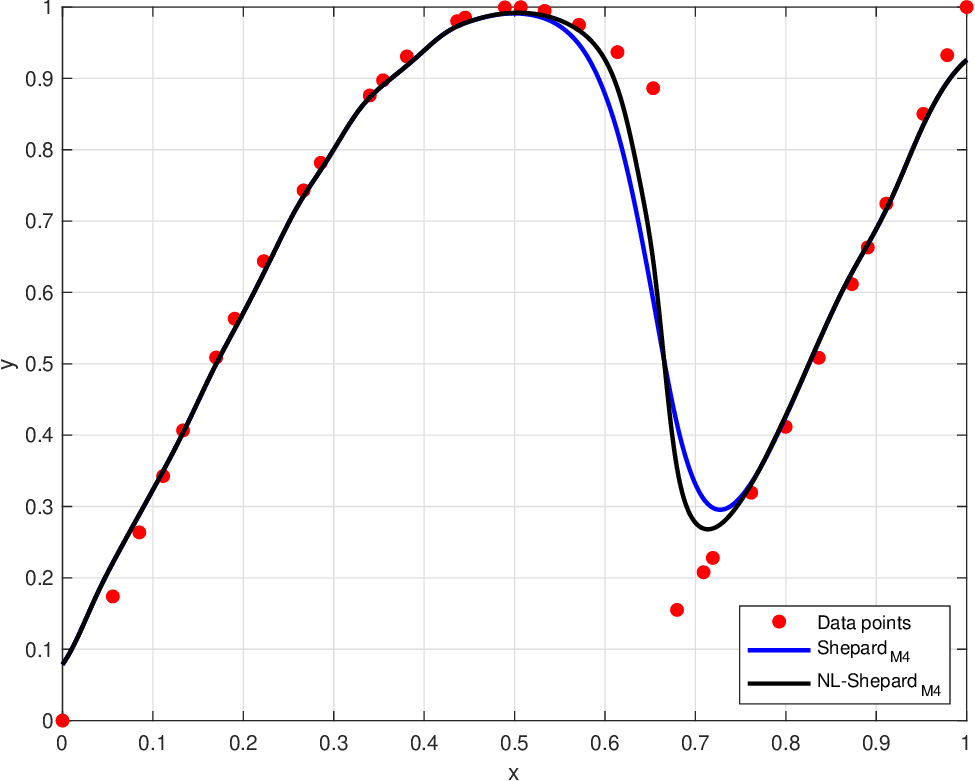} & 	\includegraphics[width=5.2cm]{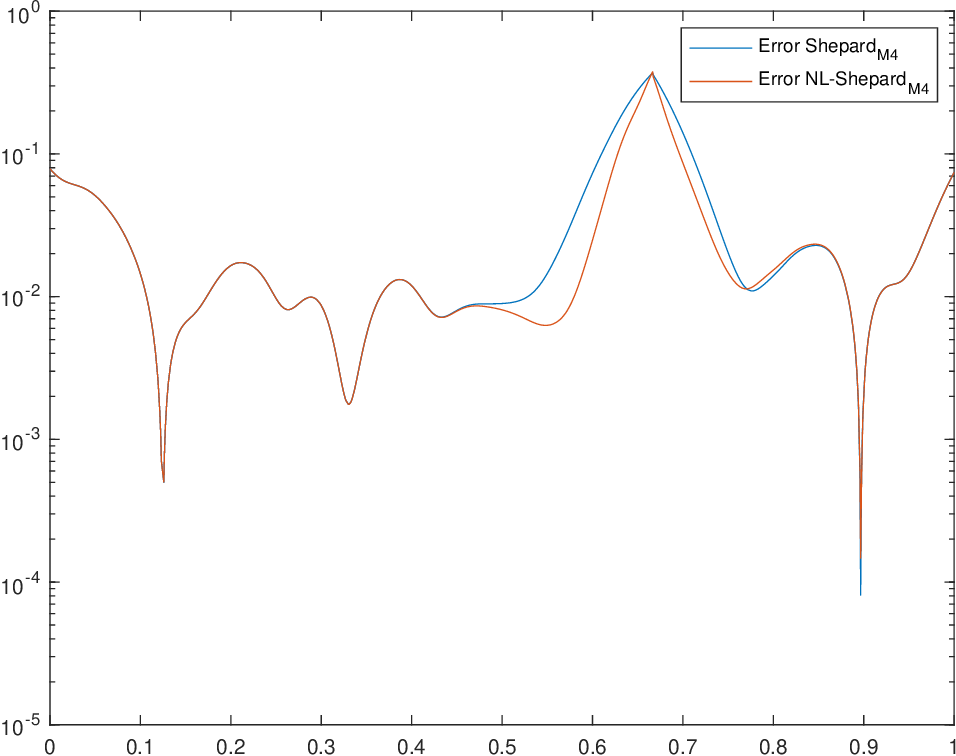}\\
	\includegraphics[width=5.2cm]{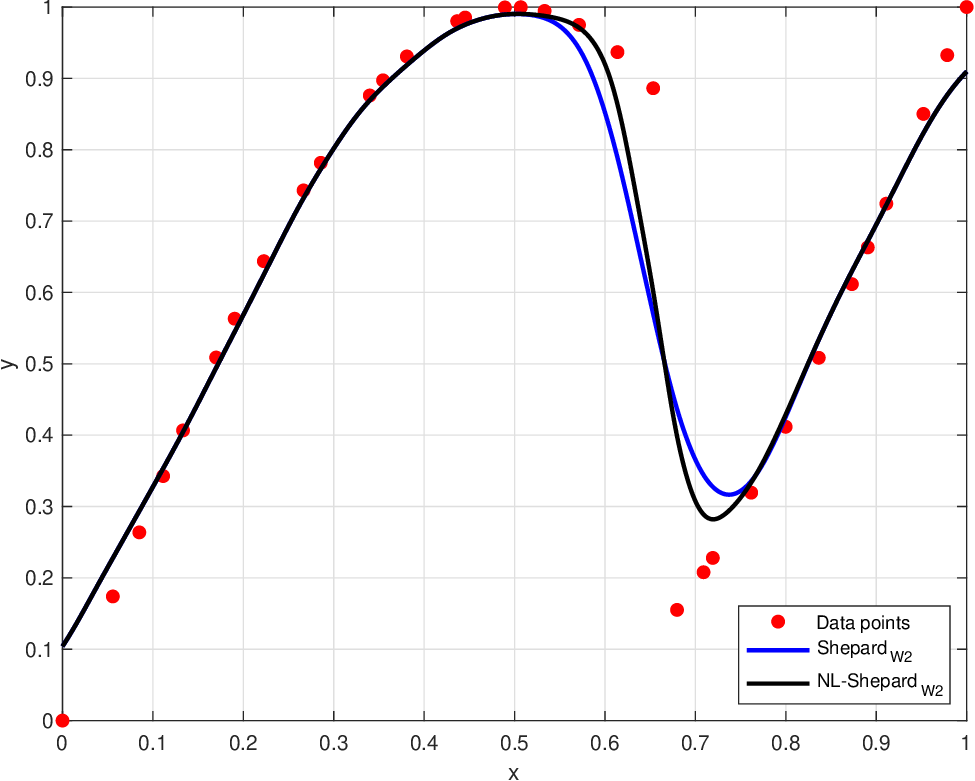} & 	\includegraphics[width=5.2cm]{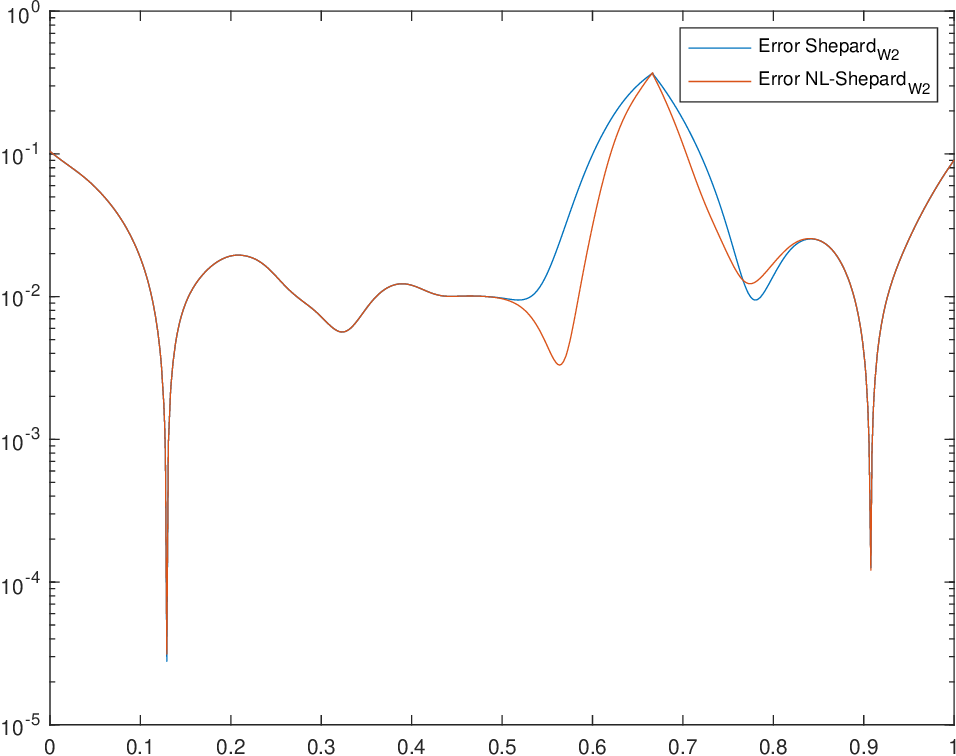}\\
		\includegraphics[width=5.2cm]{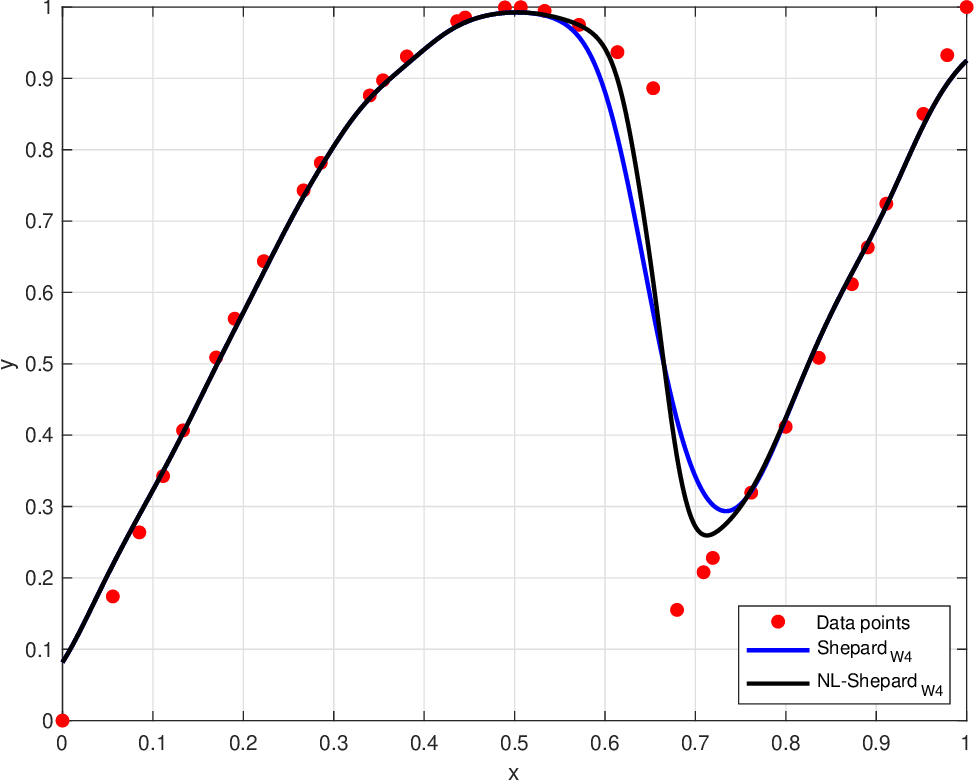} & 	\includegraphics[width=5.2cm]{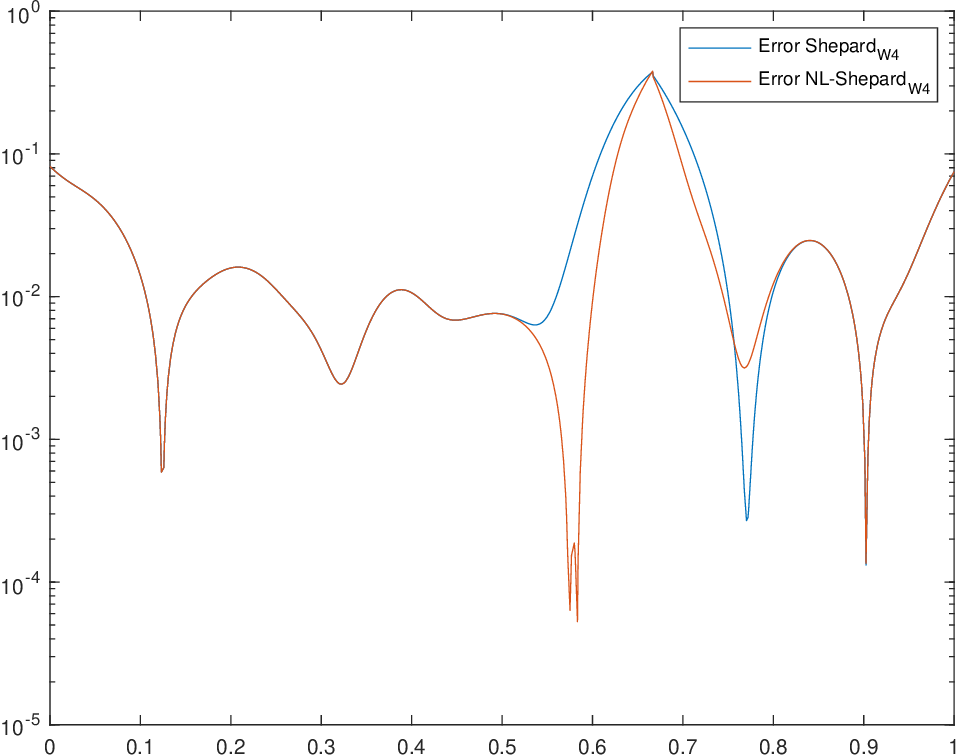}
		\end{tabular}
\end{center}
			\caption{Approximation to the function $g$ (red solid dots), Eq. \eqref{funciong}, using $n=32$ Halton points (red dots) and generating 20 Halton evaluation points between each of them. In each plot the classical (blue solid line) and data-dependent (black solid line) shepard algorithms have been used. The first column presents the approximation obtained by the algorithms. The second column presents the error obtained in a semilogarithmic scale. In this case, the blue line is for the classical Shepard and the red line for the data-dependent. From top to bottom these algorithms are: Shepard$_{\text{G}}$ and Shepard$_{\text{G}}$, Shepard$_{\text{W2}}$ and  NL-Shepard$_{\text{W2}}$,  Shepard$_{\text{W4}}$ and  NL-Shepard$_{\text{W4}}$,  Shepard$_{\text{M2}}$ and NL-Shepard$_{\text{M2}}$, Shepard$_{\text{M4}}$ and NL-Shepard$_{\text{M4}}$.}
		\label{exp2}
	\end{figure}

\subsection{Mitigation of the oscillations close to jump discontinuities in the function for bivariate data}

In this subsection, we examine how the linear and data-dependent schemes perform when applied to bivariate data containing piecewise smooth features. For this purpose, we consider the widely used Franke's function \cite{Franke}, which serves as a standard benchmark in many surface approximation studies.

\begin{equation}\label{frankesfunction}
\begin{split}
f(x,y) =\;& \frac{3}{4} e^{- \frac{1}{4}((9x - 2)^2 + (9y - 2)^2)} + \frac{3}{4} e^{- \frac{1}{49}(9x + 1)^2 - \frac{1}{10}(9y + 1)} \\
& + \frac{1}{2} e^{- \frac{1}{4}((9x - 7)^2 + (9y - 3)^2)} - \frac{1}{5} e^{-(9x - 4)^2 - (9y - 7)^2}.
\end{split}
\end{equation}
To create a jump discontinuity in the test function, we alter its definition in the following manner:
\begin{equation}\label{frankesdisc}
f_1(x,y) = \left\{
\begin{array}{ll}
2 + f(x,y), & x^2 + y^2 - 0.3^2 \geq 0, \\
-1 + f(x,y), & x^2 + y^2 - 0.3^2 < 0.
\end{array}
\right.
\end{equation}

As in the one-dimensional tests, we work with two types of sampling strategies: structured grids and Halton point sets. In both settings, the data sites lie within the square \([0,1]^2\), starting from an initial configuration of \(40 \times 40\) sample points.

For the experiments based on uniform grids (shown in Figures~\ref{exp1_2D}-\ref{exp5_2D}), the shape parameter \(\varepsilon\) is selected as in the previous subsections, following (\ref{vareps}), either for gridded data or Halton points.

Across all experiments (Figures~\ref{exp1_2D}--\ref{exp5_2D}), we approximate the piecewise smooth function \(f_1\) defined in Eq.~\eqref{frankesdisc}. The procedure is as follows. We begin with \(n = 40 \times 40\) initial data sites, sampled either on a uniform grid or using a Halton sequence. To refine the evaluation mesh, five additional interpolation points are inserted between each pair of neighboring initial sites in both coordinate directions. This refinement results in a final evaluation grid of \(235 \times 235\) nodes, which includes the original sample points.

Each figure is organized into three columns. The left column displays the approximation obtained using either the linear \(\textsc{RBF}_{\mathcal{H}}\) method or the data-dependent \(\textsc{NL-RBF}_{\mathcal{H}}\) variant, following the notation in Table~\ref{tabla1nucleos}. The central column shows the same approximation viewed from above (cenital perspective). The right column presents the pointwise absolute error computed over the entire evaluation grid using (\ref{error}).

Figure~\ref{exp1_2D} contains four rows, corresponding to the following cases:
\begin{itemize}
    \item Row 1: linear RBF\(_{\text{G}}\) approximation using gridded data.
    \item Row 2: data-dependent NL-RBF\(_{\text{G}}\) approximation using gridded data.
    \item Row 3: linear RBF\(_{\text{G}}\) approximation using Halton points.
    \item Row 4: data-dependent NL-RBF\(_{\text{G}}\) approximation using Halton points.
\end{itemize}

Figures~\ref{exp2_2D}--\ref{exp5_2D} follow the same pattern and correspond to different choices of radial basis kernels:
\begin{itemize}
    \item Figure~\ref{exp2_2D}: Mat\'ern \(\mathcal{C}^2\),
    \item Figure~\ref{exp3_2D}: Mat\'ern \(\mathcal{C}^4\),
    \item Figure~\ref{exp4_2D}: Wendland \(\mathcal{C}^2\),
    \item Figure~\ref{exp5_2D}: Wendland \(\mathcal{C}^4\).
\end{itemize}

For the experiments on uniform grids (first and second rows of Figures~\ref{exp1_2D}-\ref{exp5_2D}), the classical Shepard method exhibits pronounced smearing along the discontinuity curve, which is clearly reflected in the error plots and extend noticeably away from the jump discontinuity. The data-dependent variant reduces these artifacts substantially, although a slight blurring near the discontinuity persists. A similar behaviour is observed when Halton sampling is used (third and fourth rows of Figures~\ref{exp1_2D}-\ref{exp5_2D}).

	\begin{figure}[htbp!]
\begin{center}
		\begin{tabular}{ccc}
	\hspace{-1cm}\includegraphics[width=6cm]{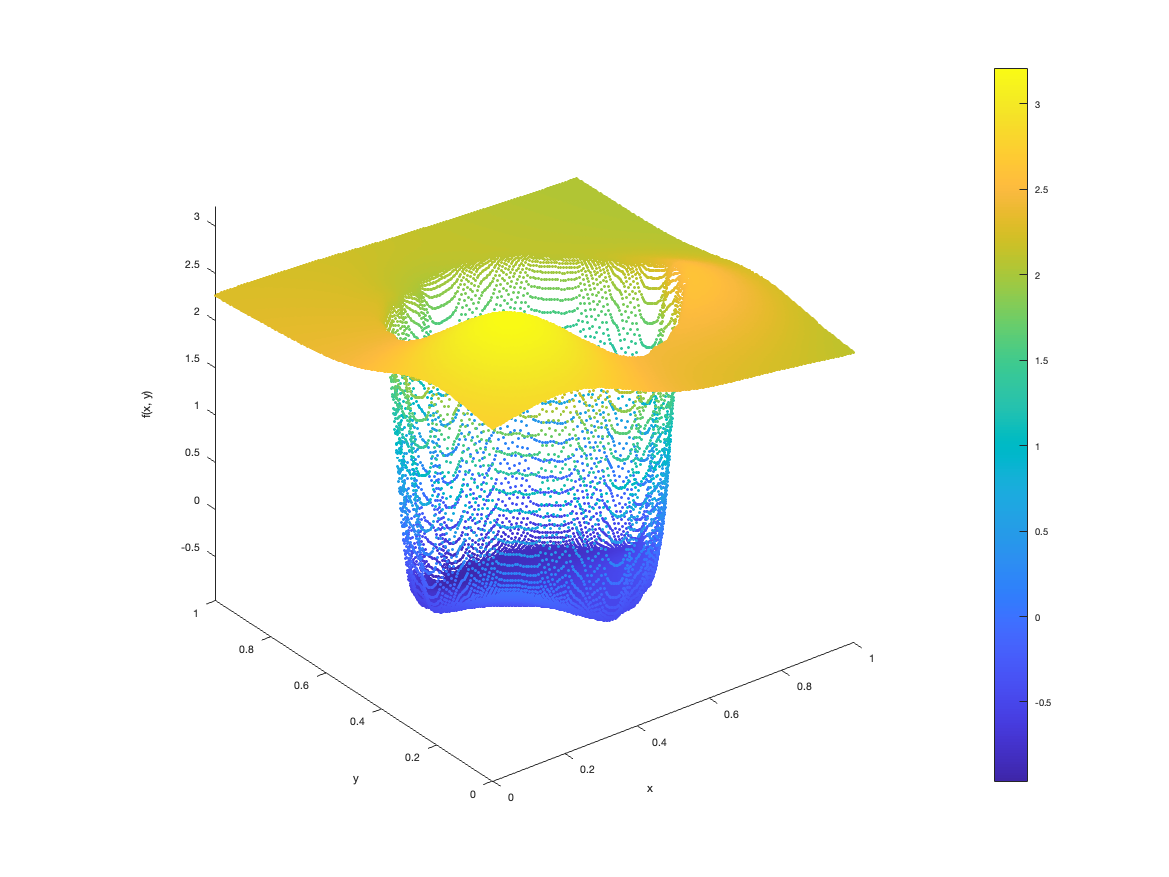} & 	\hspace{-0.9cm}\includegraphics[width=6cm]{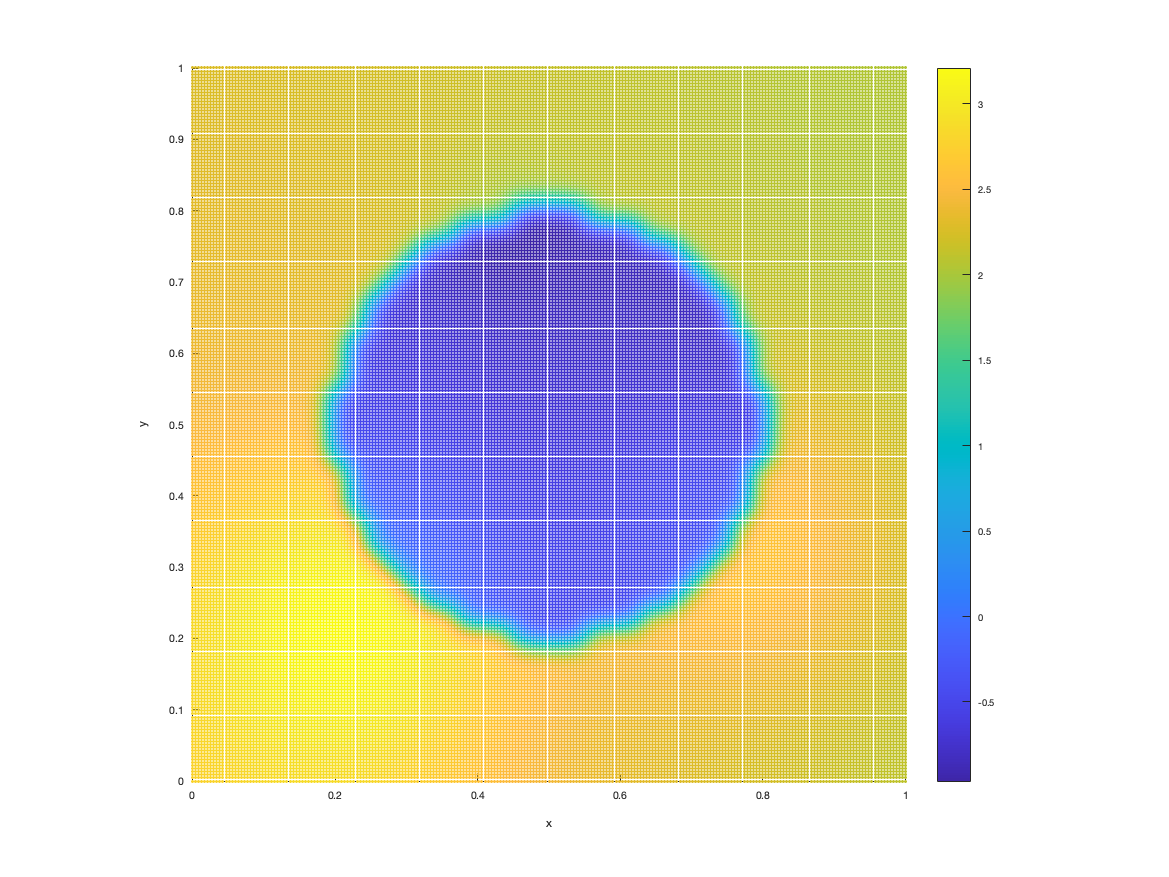} & \hspace{-0.9cm}\includegraphics[width=6cm]{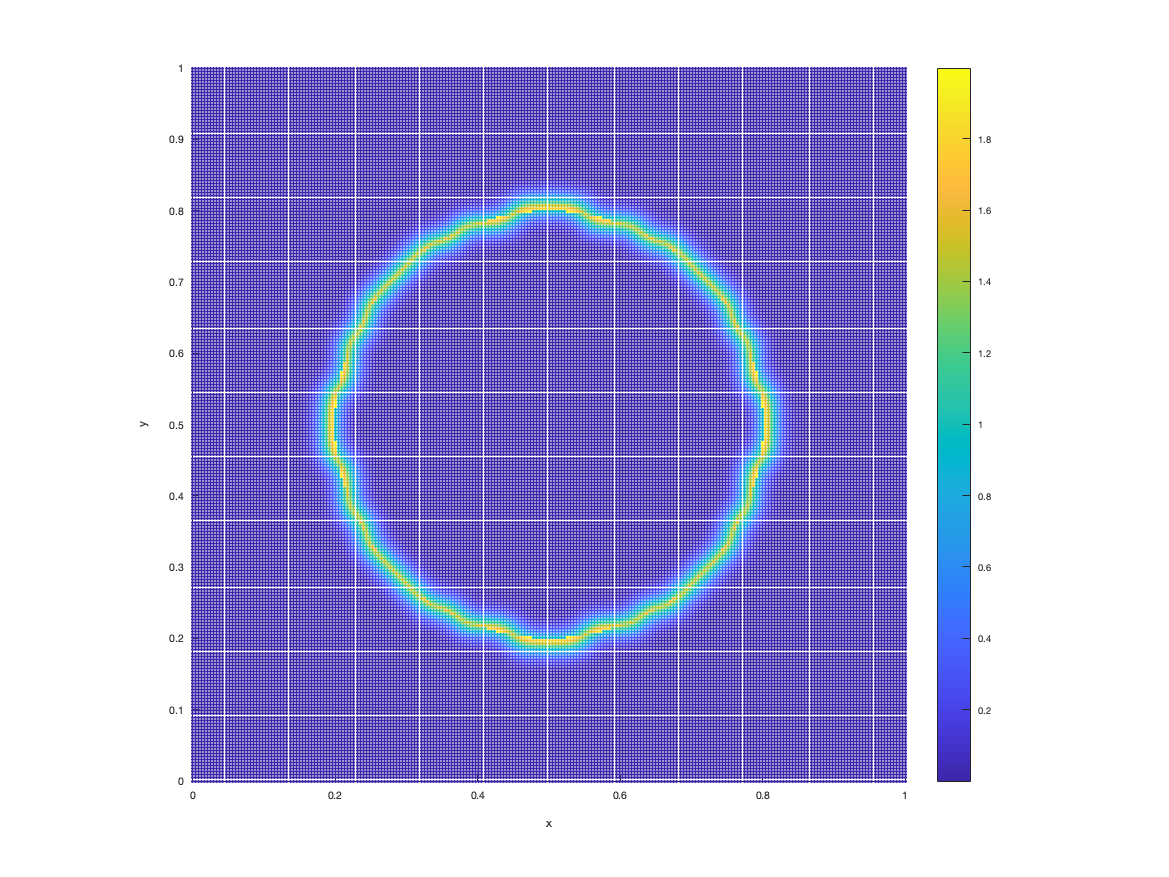}\\
\hspace{-1cm}\includegraphics[width=6cm]{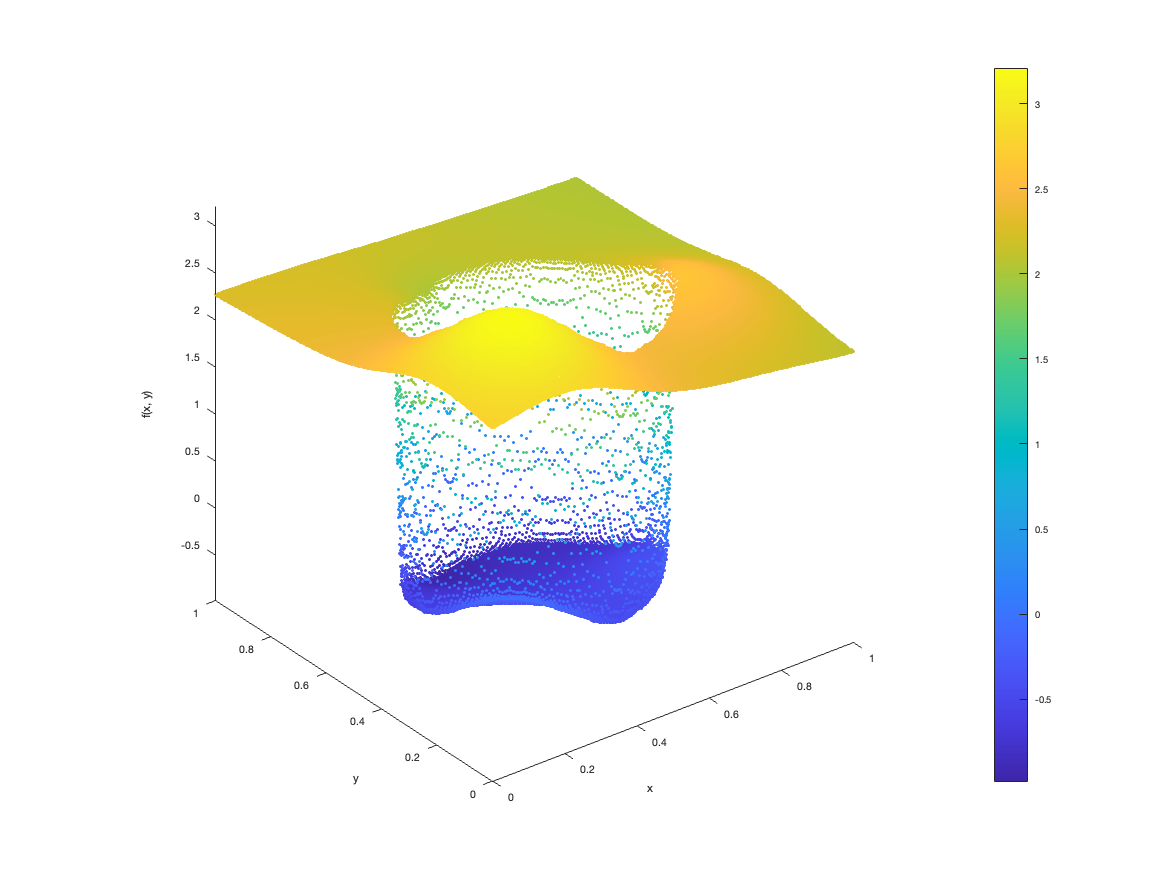} & 	\hspace{-0.9cm}\includegraphics[width=6cm]{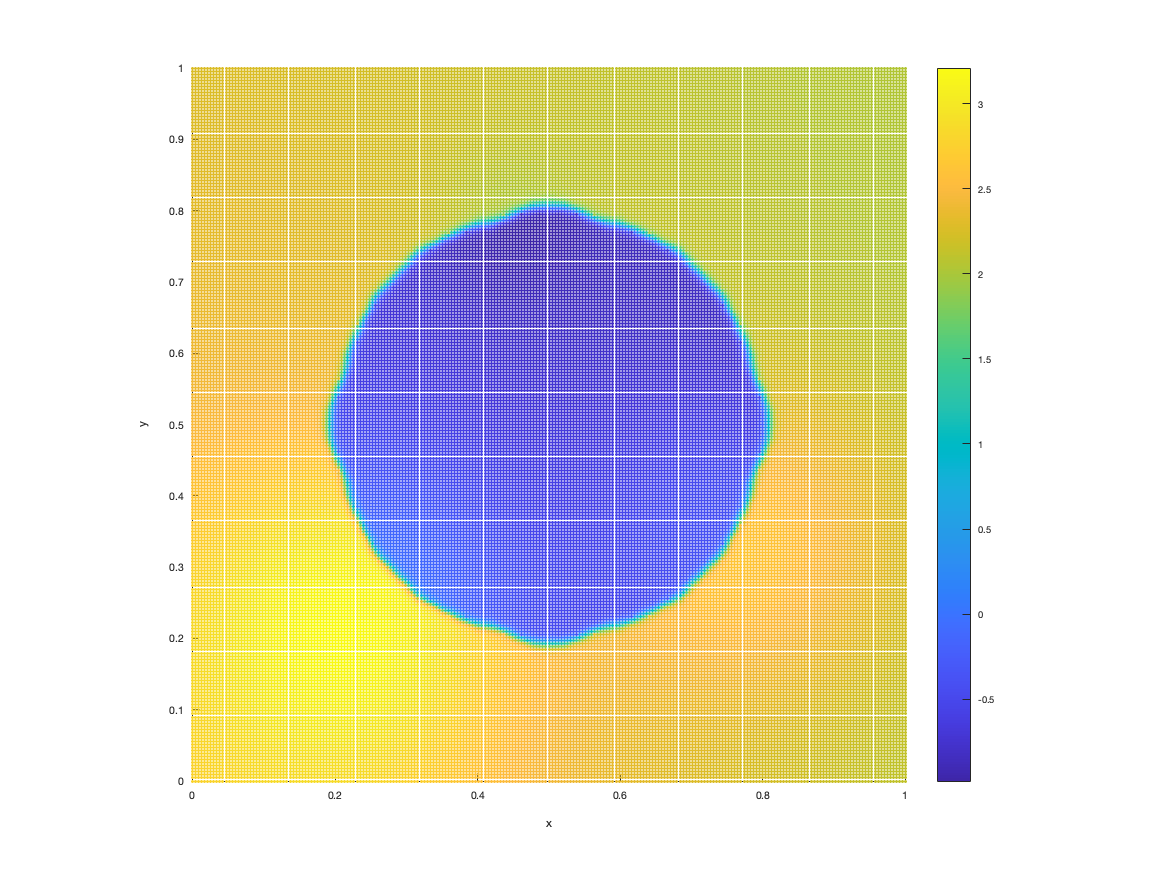} & \hspace{-0.9cm}\includegraphics[width=6cm]{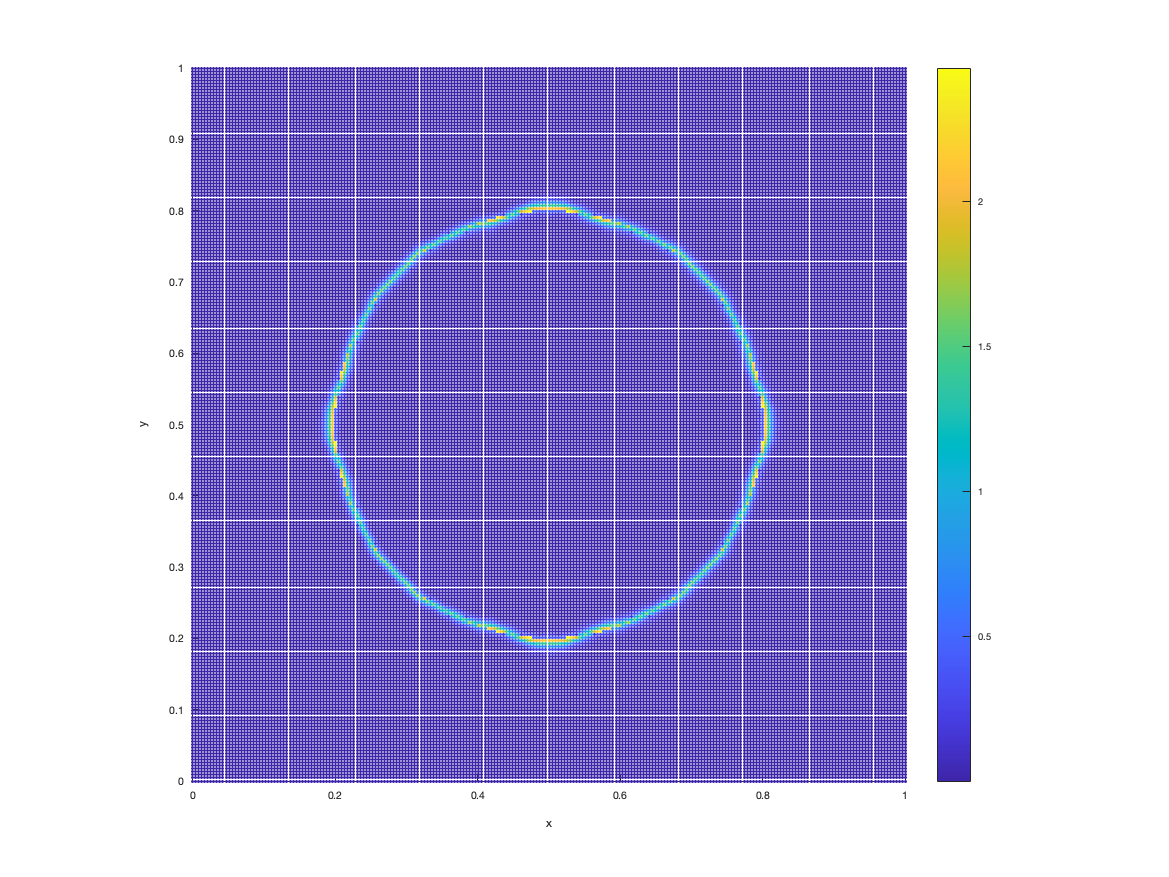}\\
	\hspace{-1cm}\includegraphics[width=6cm]{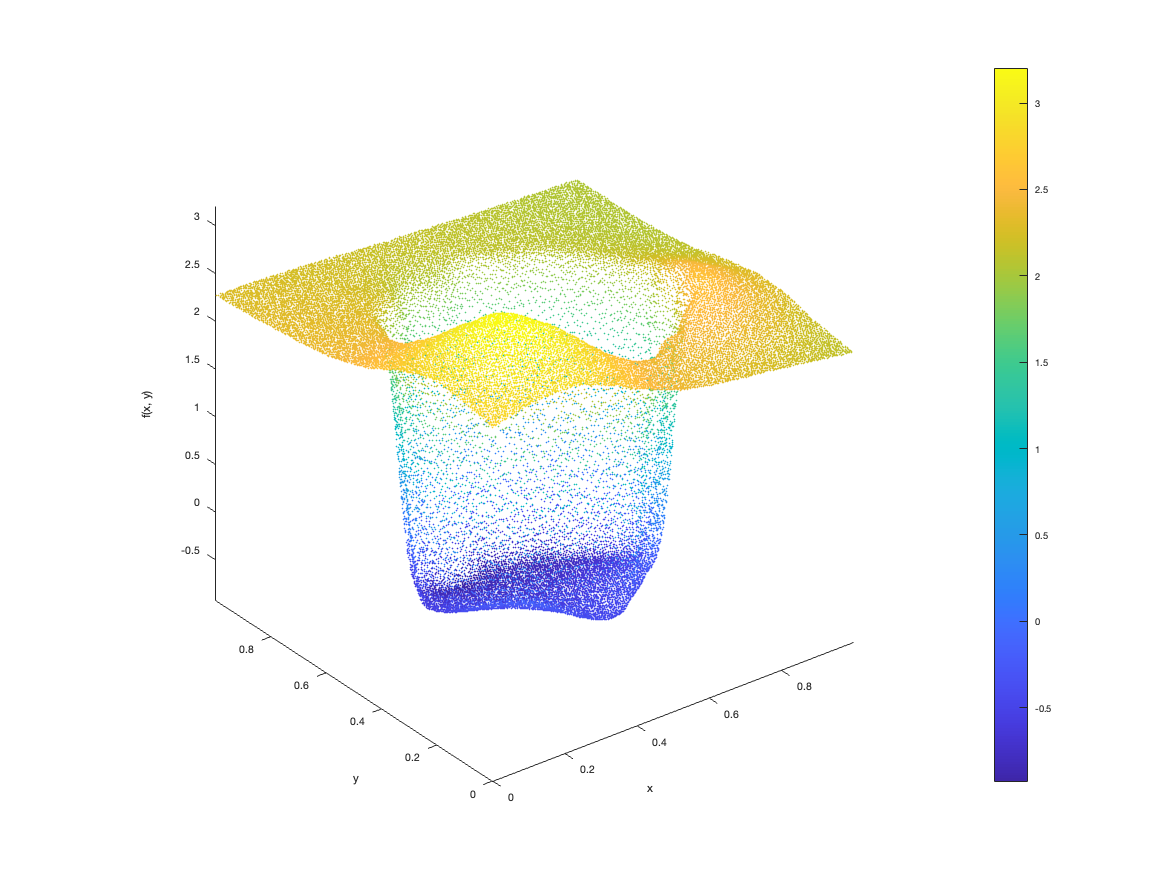} & 	\hspace{-0.9cm}\includegraphics[width=6cm]{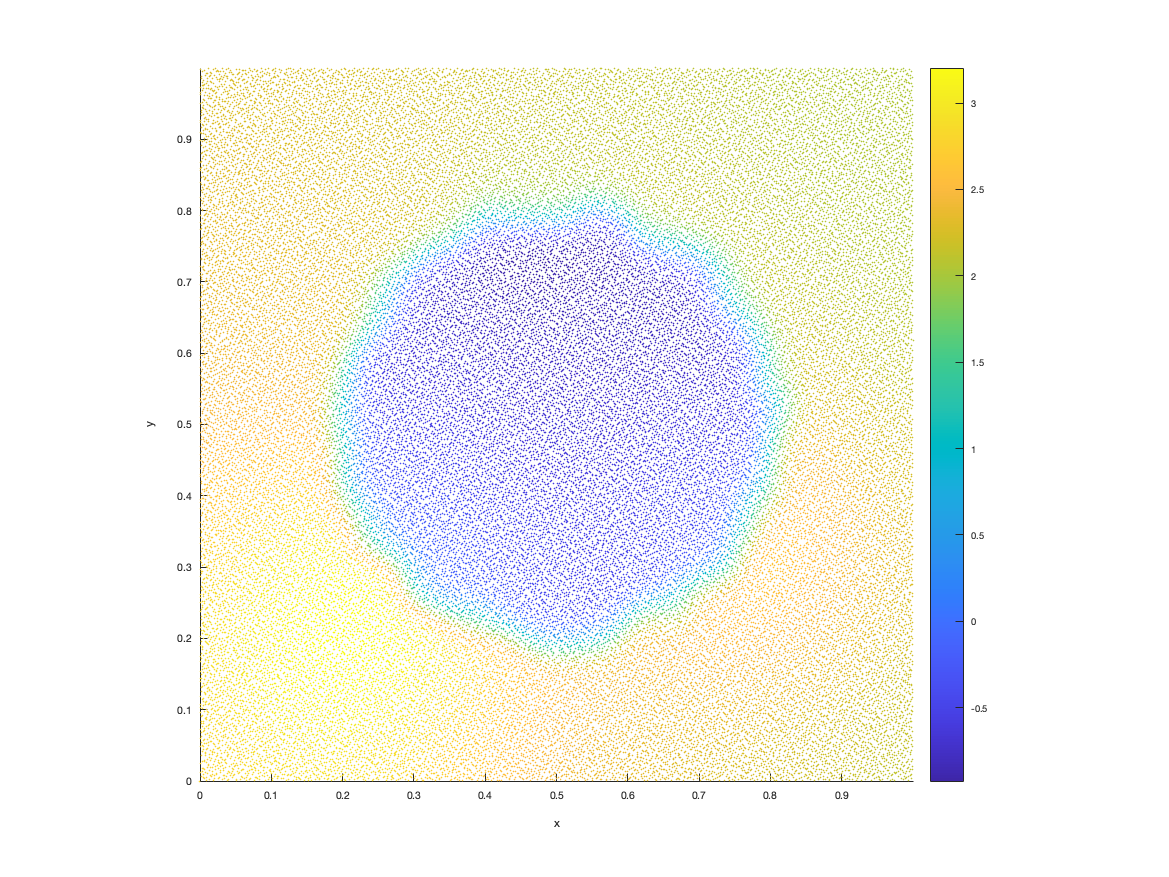} & \hspace{-0.9cm}\includegraphics[width=6cm]{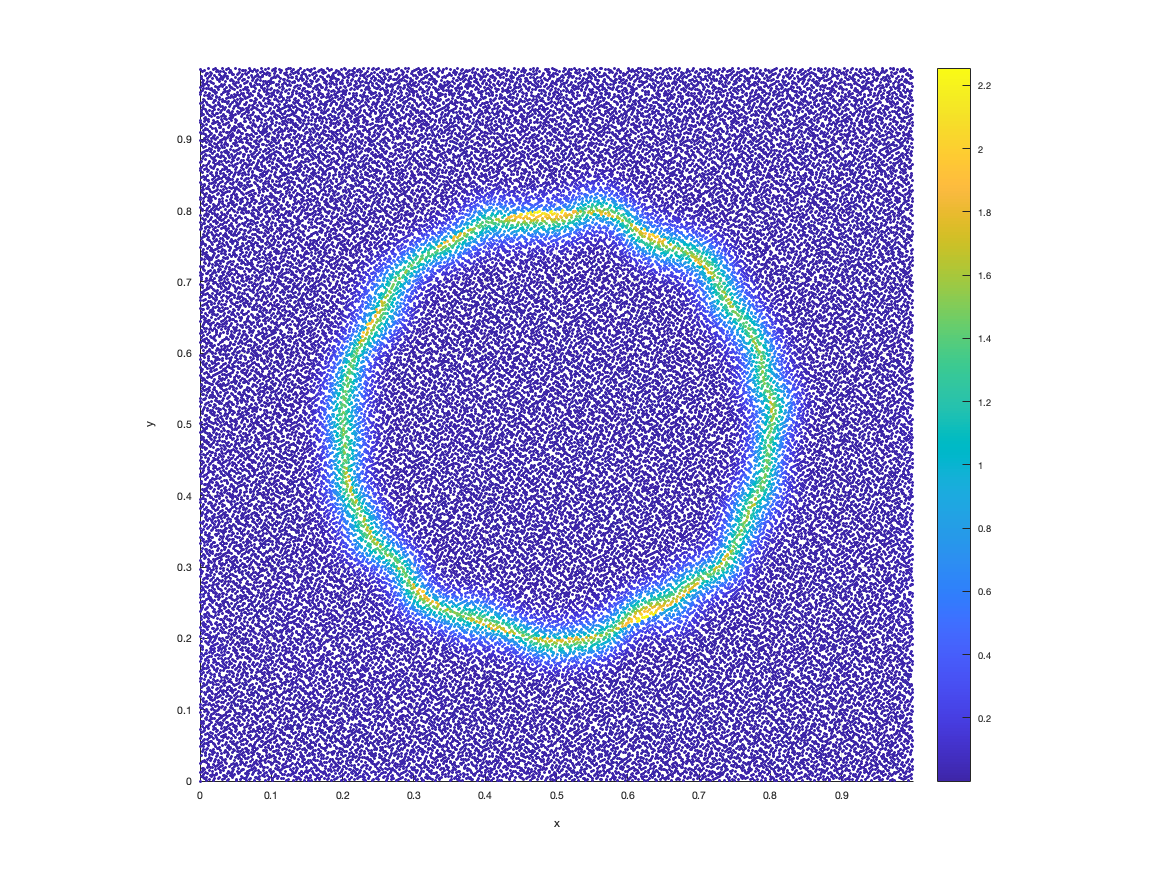}\\
\hspace{-1cm}\includegraphics[width=6cm]{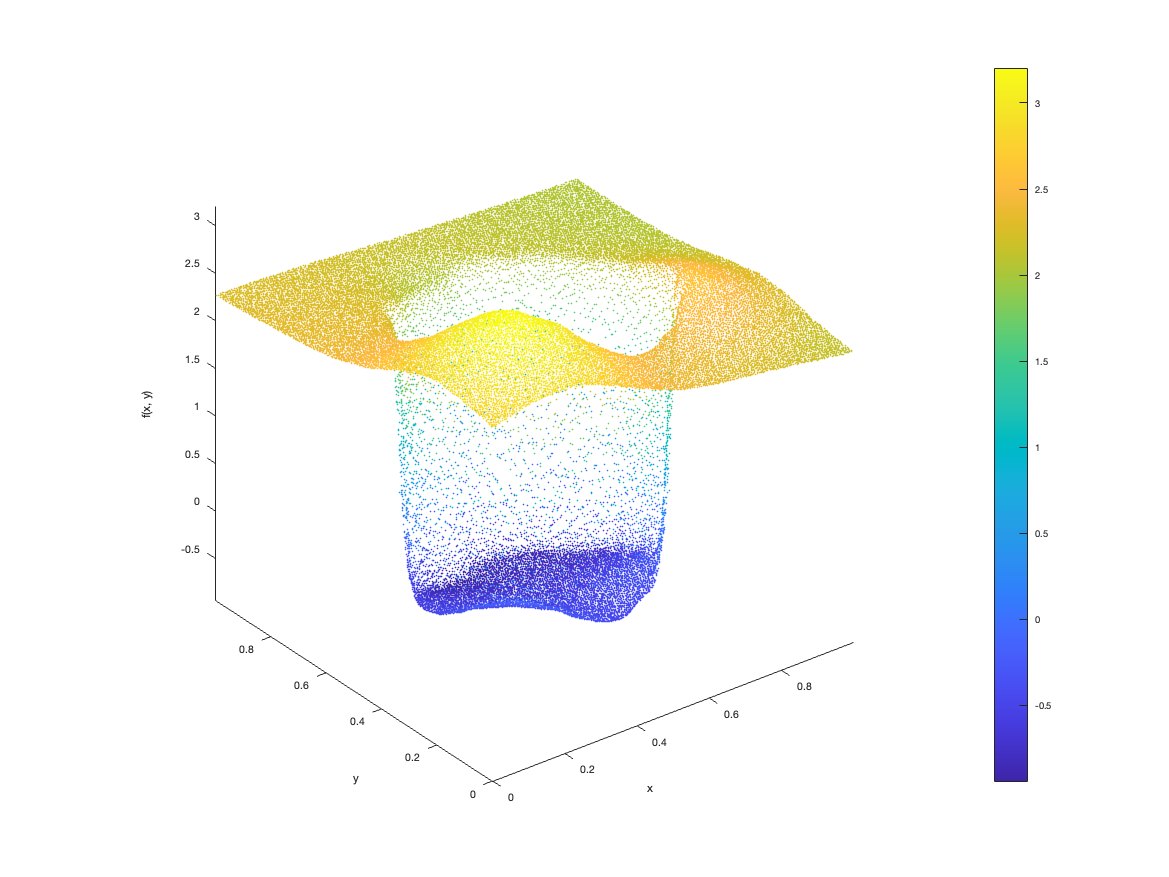} & 	\hspace{-0.9cm}\includegraphics[width=6cm]{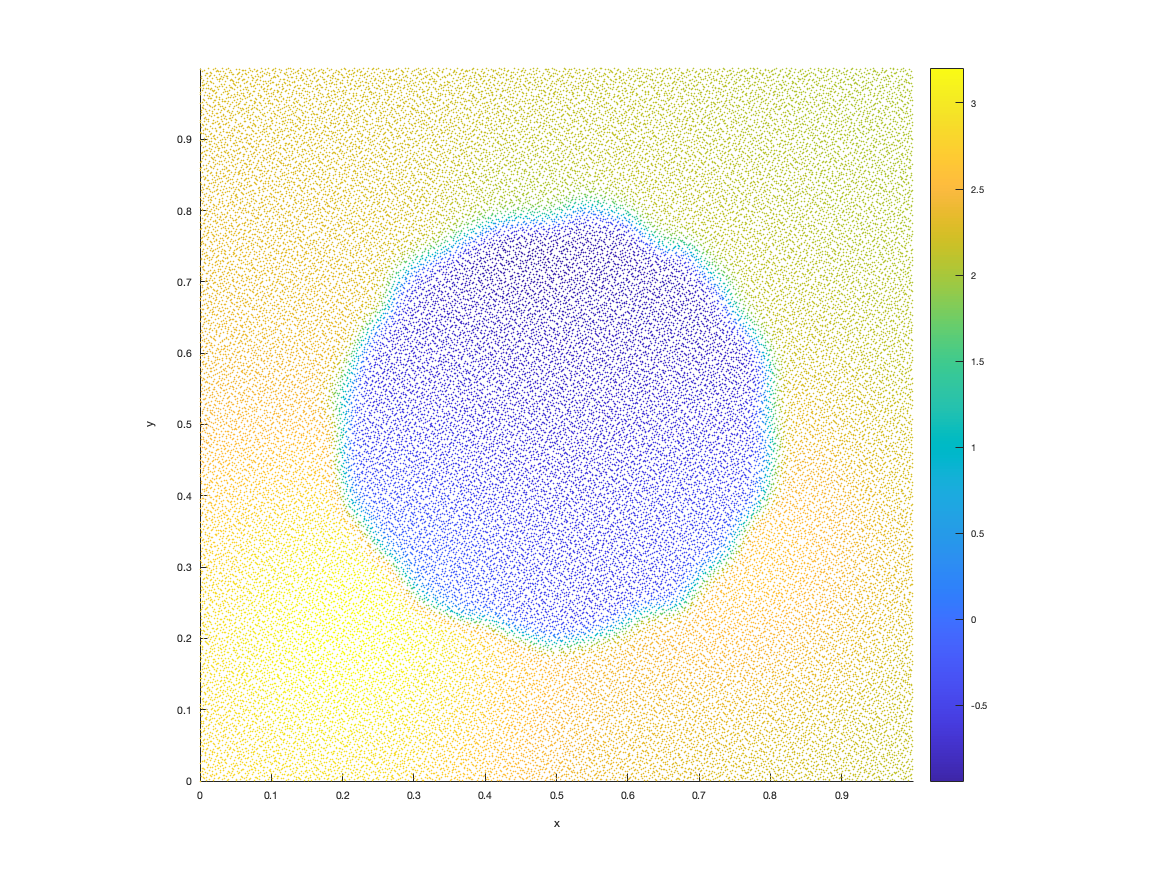} & \hspace{-0.9cm}\includegraphics[width=6cm]{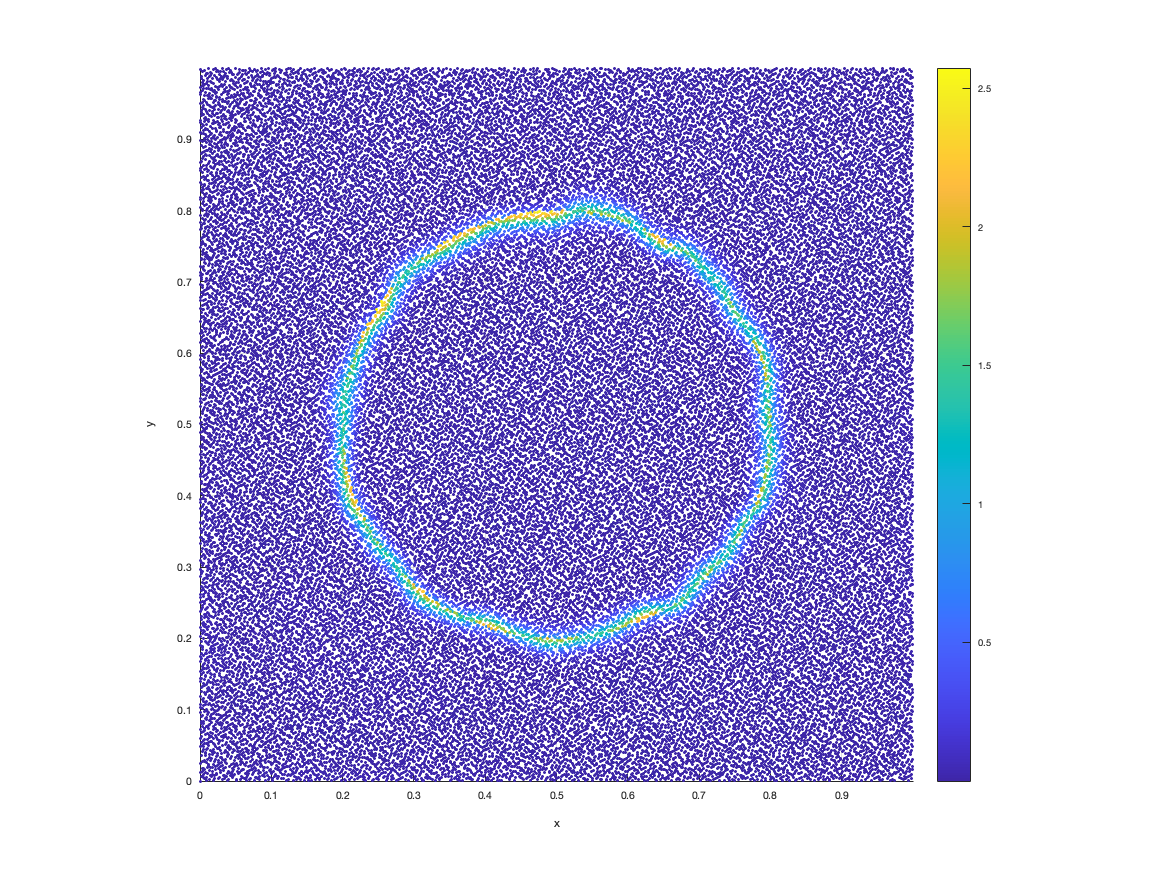}\\
		\end{tabular}
\end{center}
			\caption{Approximation of the piecewise smooth function \(f_1\) (defined in Eq.~\eqref{frankesdisc}). 
Each figure is arranged in three columns: the left column shows the surface reconstructed using either the linear RBF\(_{\text{G}}\) method or its data-dependent counterpart NL-RBF\(_{\text{G}}\). The central column displays the same approximation from a top (cenital) view. 
The right column depicts the pointwise absolute error over the evaluation grid.
The figure contains four rows: 
Row~1 shows the linear RBF\(_{\text{G}}\) approximation using gridded points, 
Row~2 shows the data-dependent NL-RBF\(_{\text{G}}\) approximation on the same grid, 
Row~3 presents the linear RBF\(_{\text{G}}\) approximation computed from Halton points, 
and Row~4 displays the corresponding data-dependent NL-RBF\(_{\text{G}}\) result.}
		\label{exp1_2D}
	\end{figure}

	\begin{figure}[htbp!]
\begin{center}
		\begin{tabular}{ccc}
	\hspace{-1cm}\includegraphics[width=6cm]{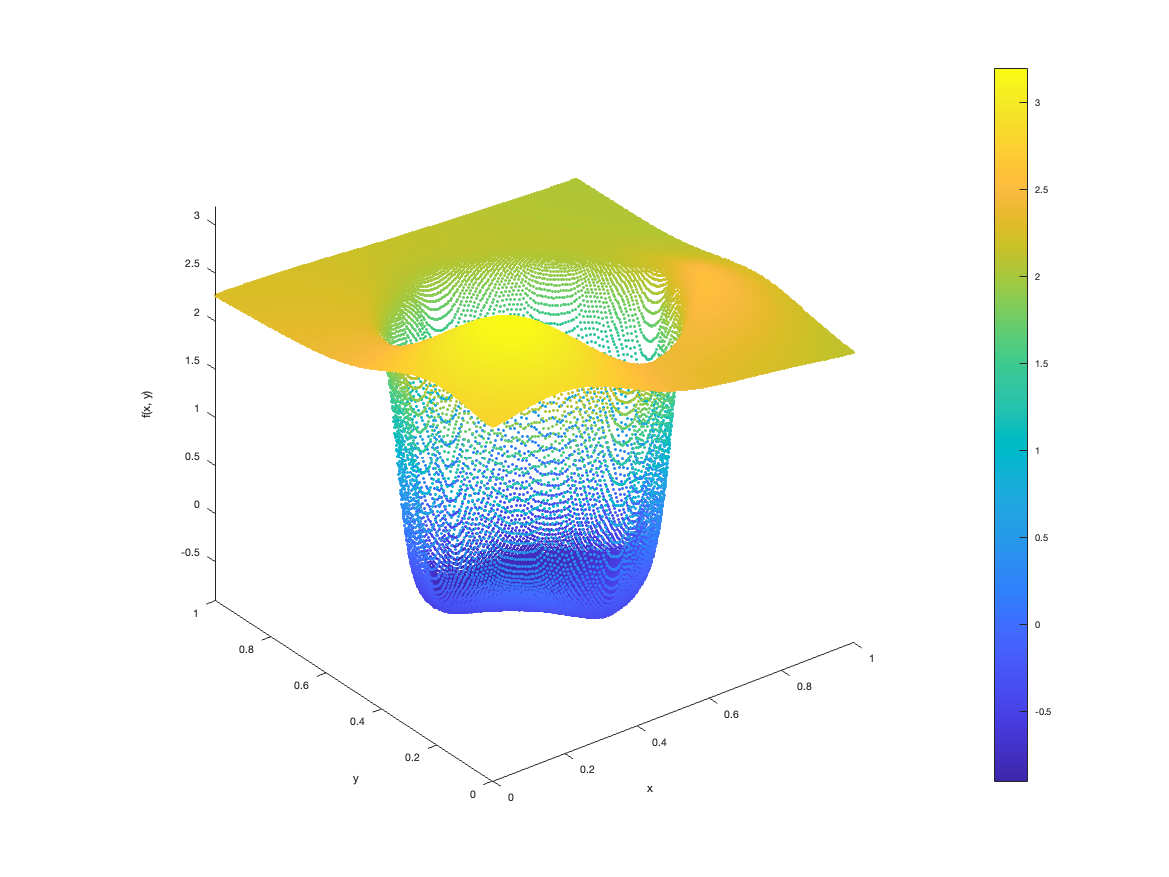} & 	\hspace{-0.9cm}\includegraphics[width=6cm]{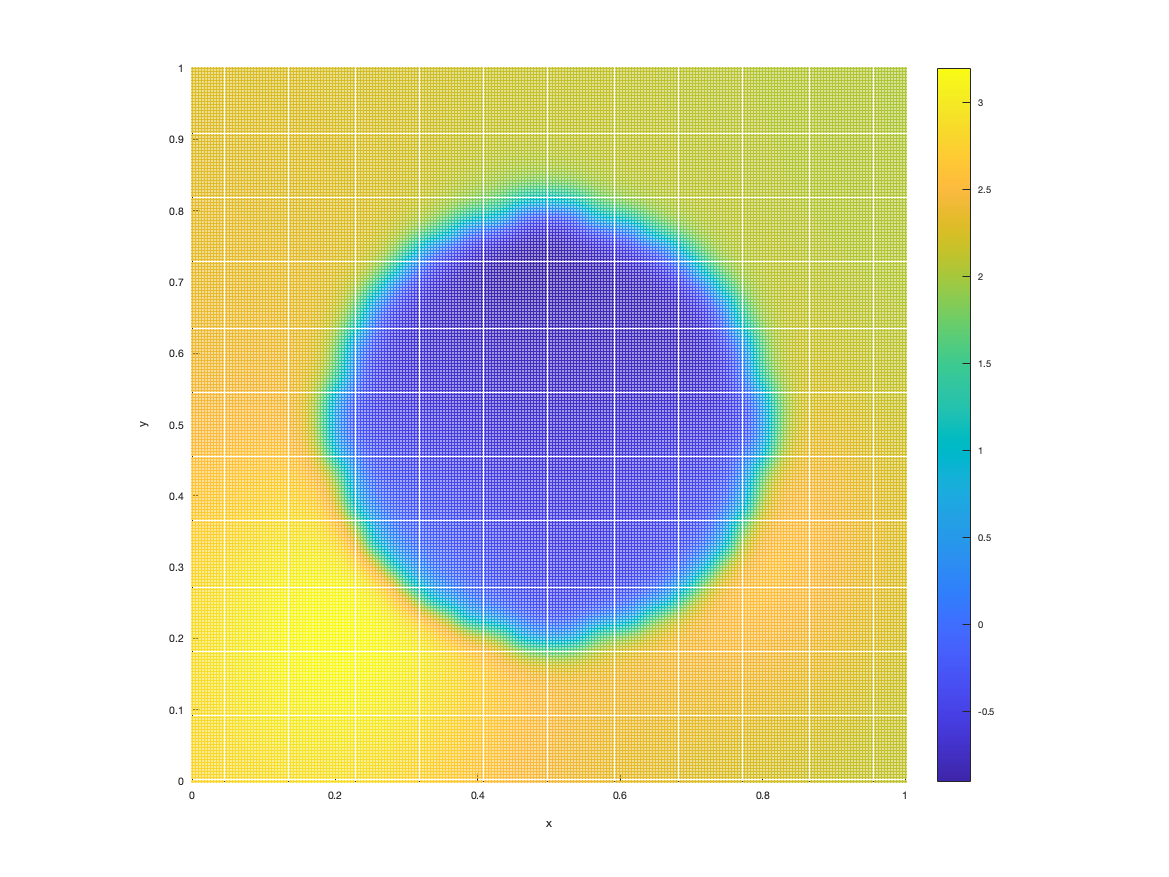} & \hspace{-0.9cm}\includegraphics[width=6cm]{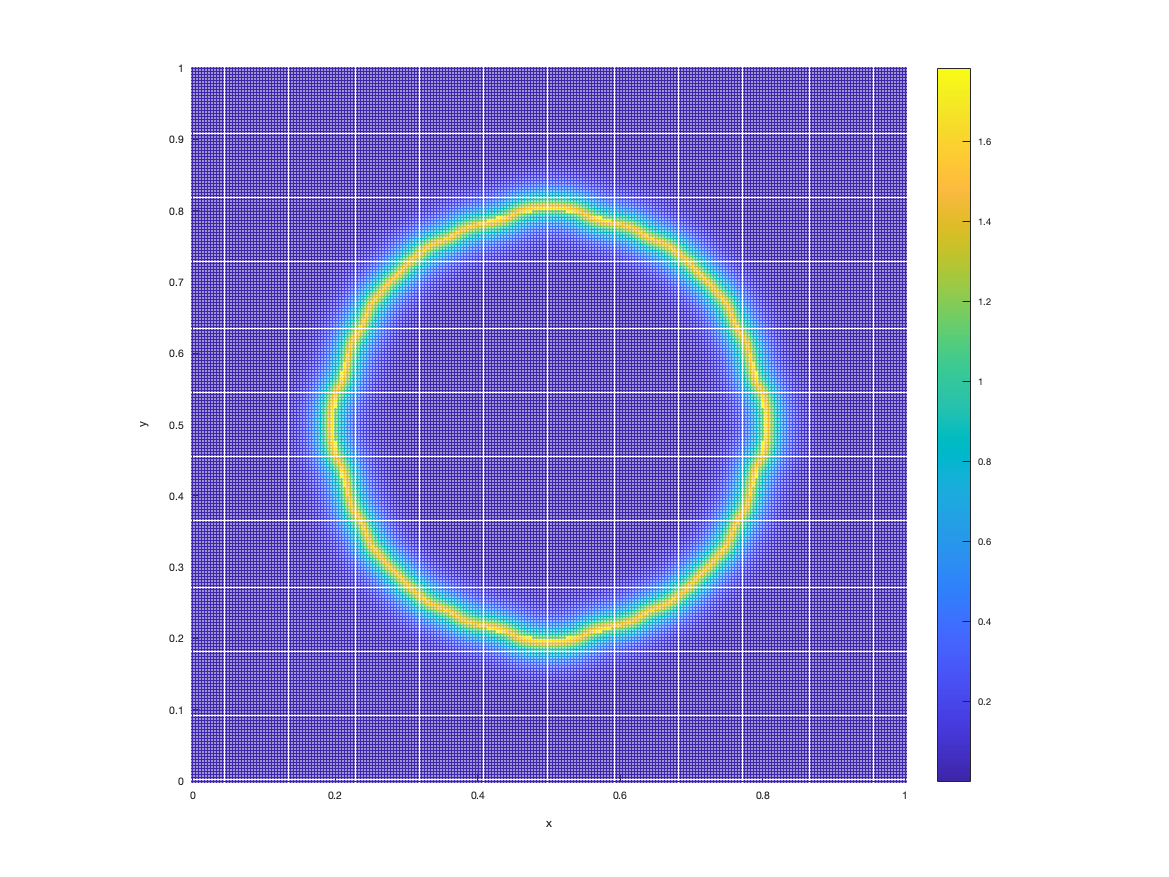}\\
\hspace{-1cm}\includegraphics[width=6cm]{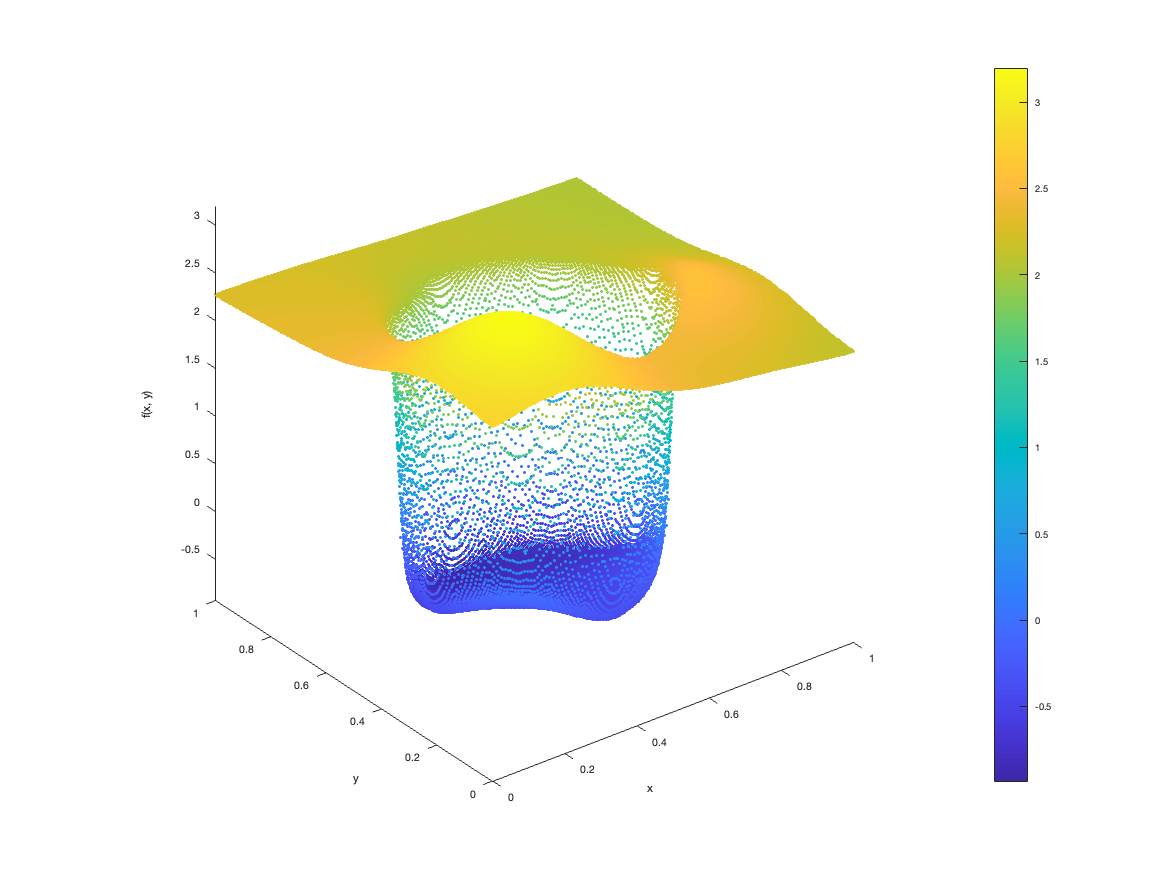} & 	\hspace{-0.9cm}\includegraphics[width=6cm]{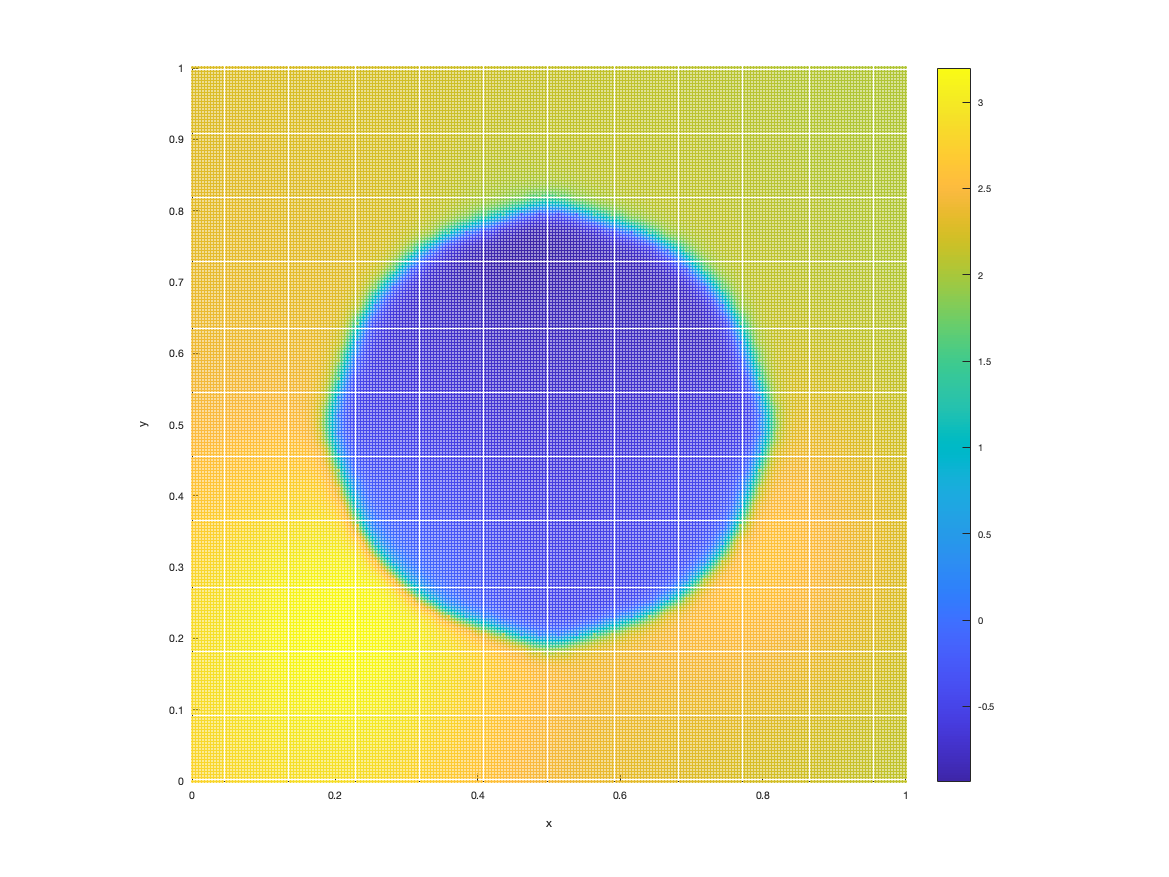} & \hspace{-0.9cm}\includegraphics[width=6cm]{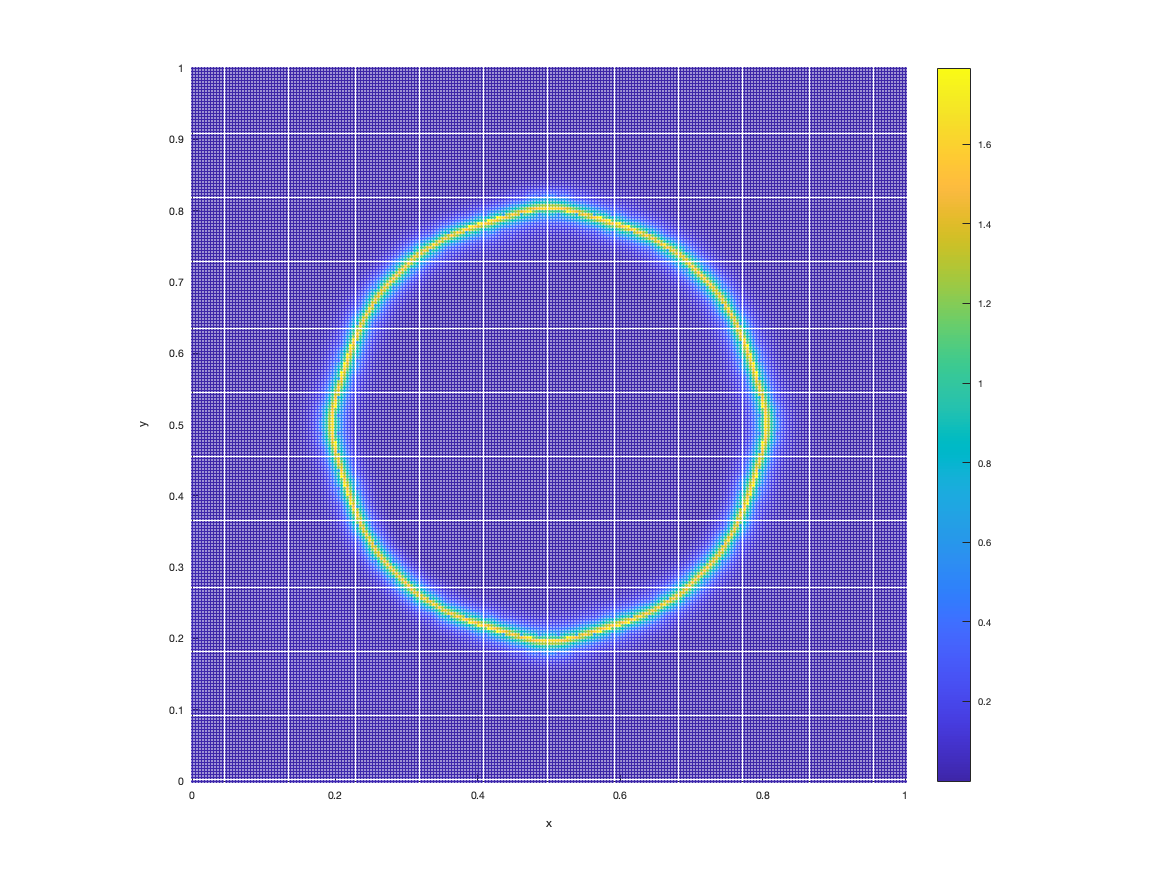}\\
	\hspace{-1cm}\includegraphics[width=6cm]{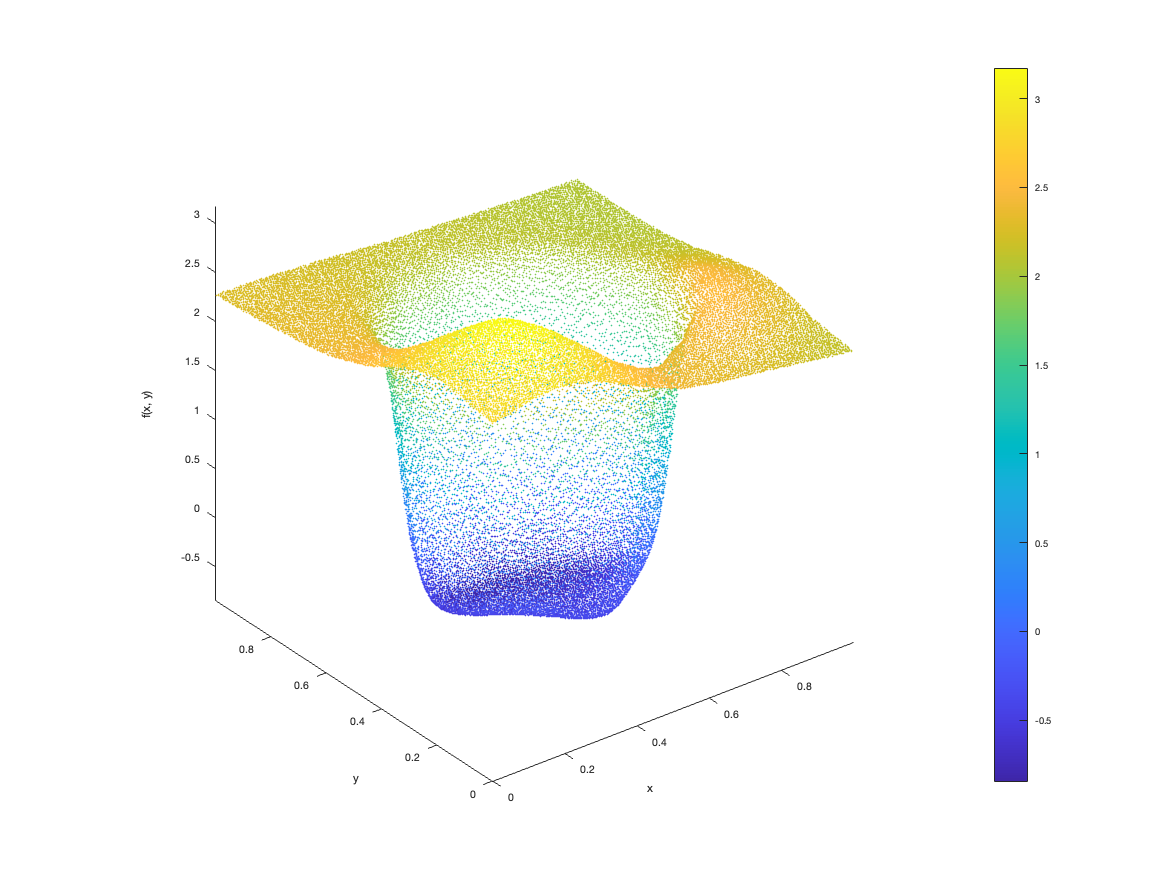} & 	\hspace{-0.9cm}\includegraphics[width=6cm]{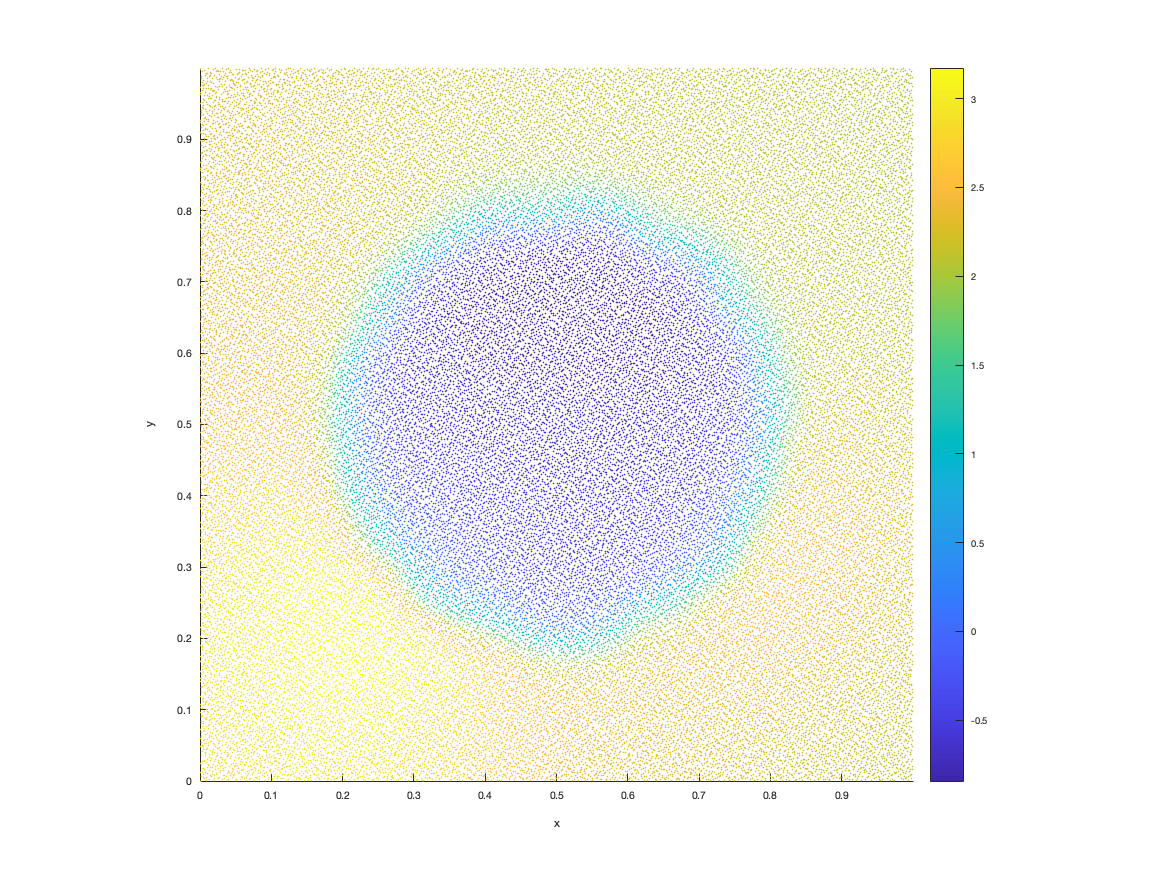} & \hspace{-0.9cm}\includegraphics[width=6cm]{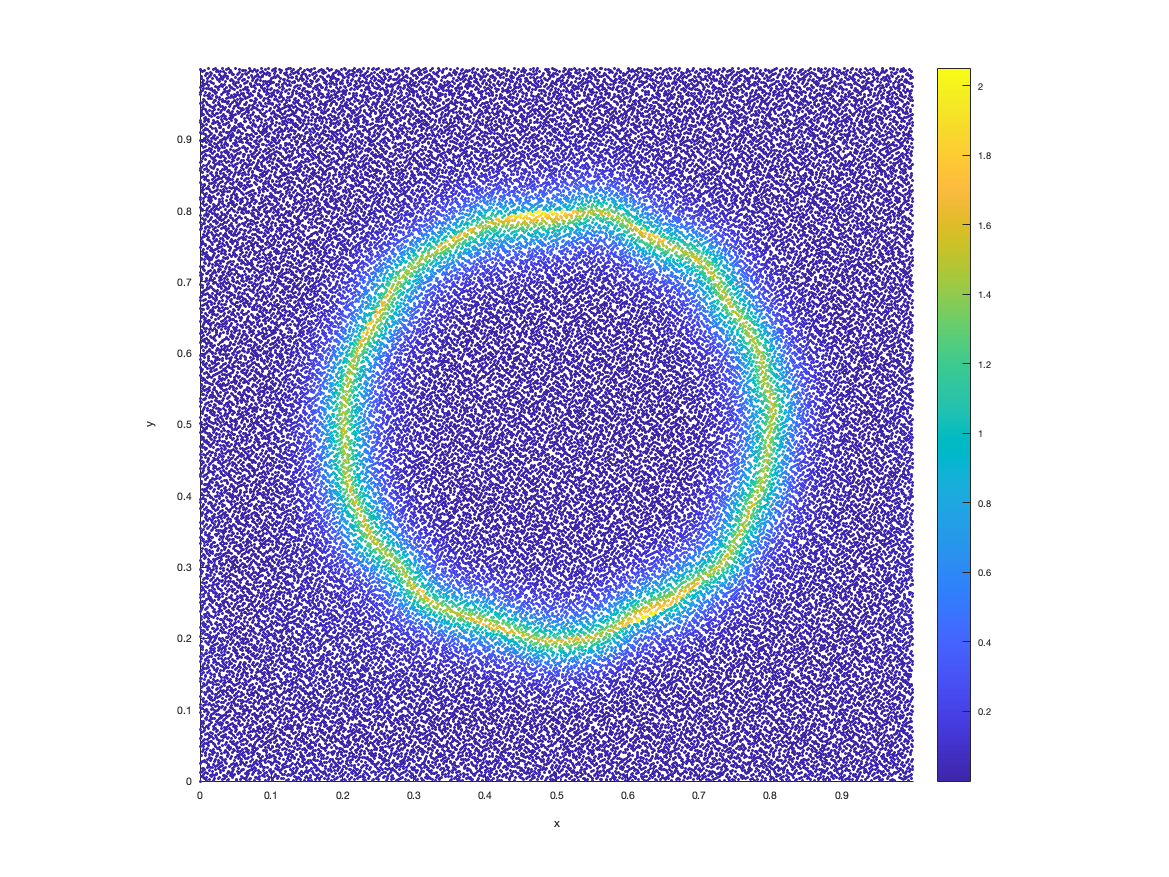}\\
\hspace{-1cm}\includegraphics[width=6cm]{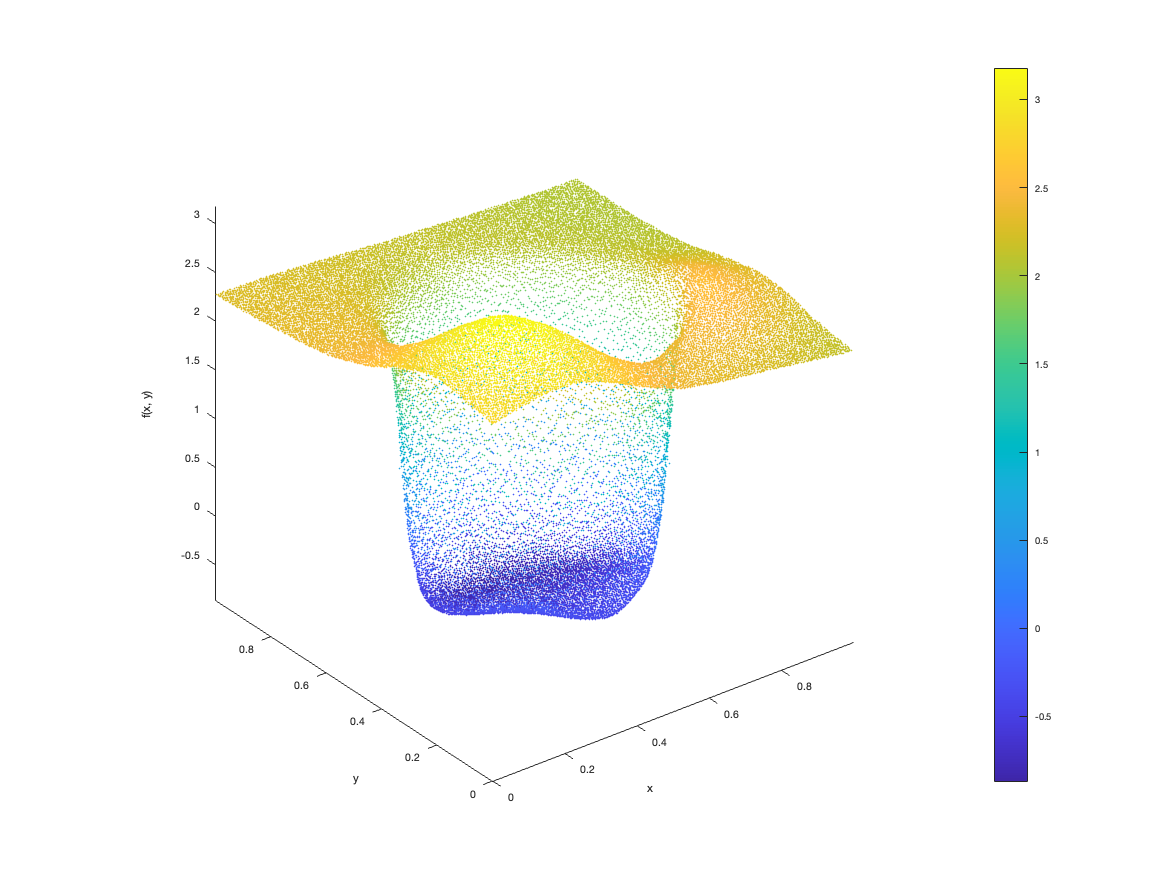} & 	\hspace{-0.9cm}\includegraphics[width=6cm]{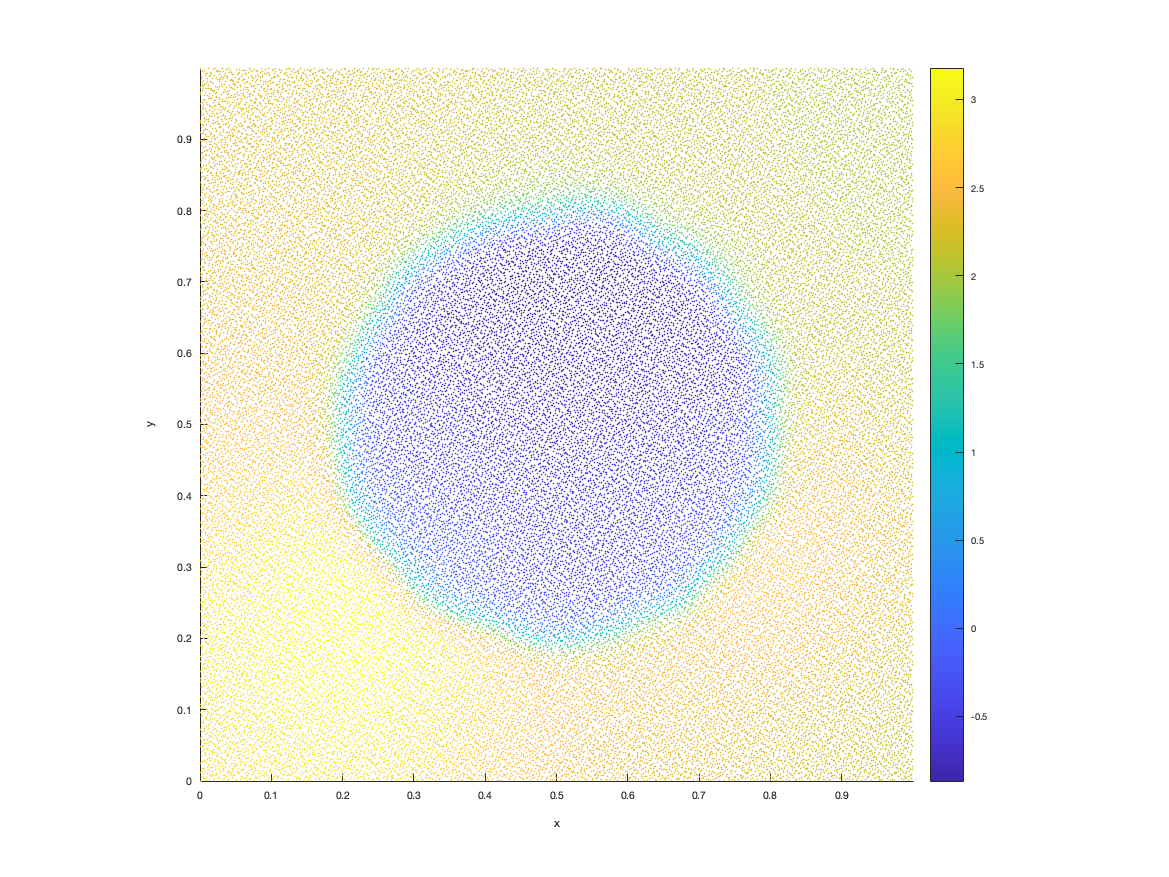} & \hspace{-0.9cm}\includegraphics[width=6cm]{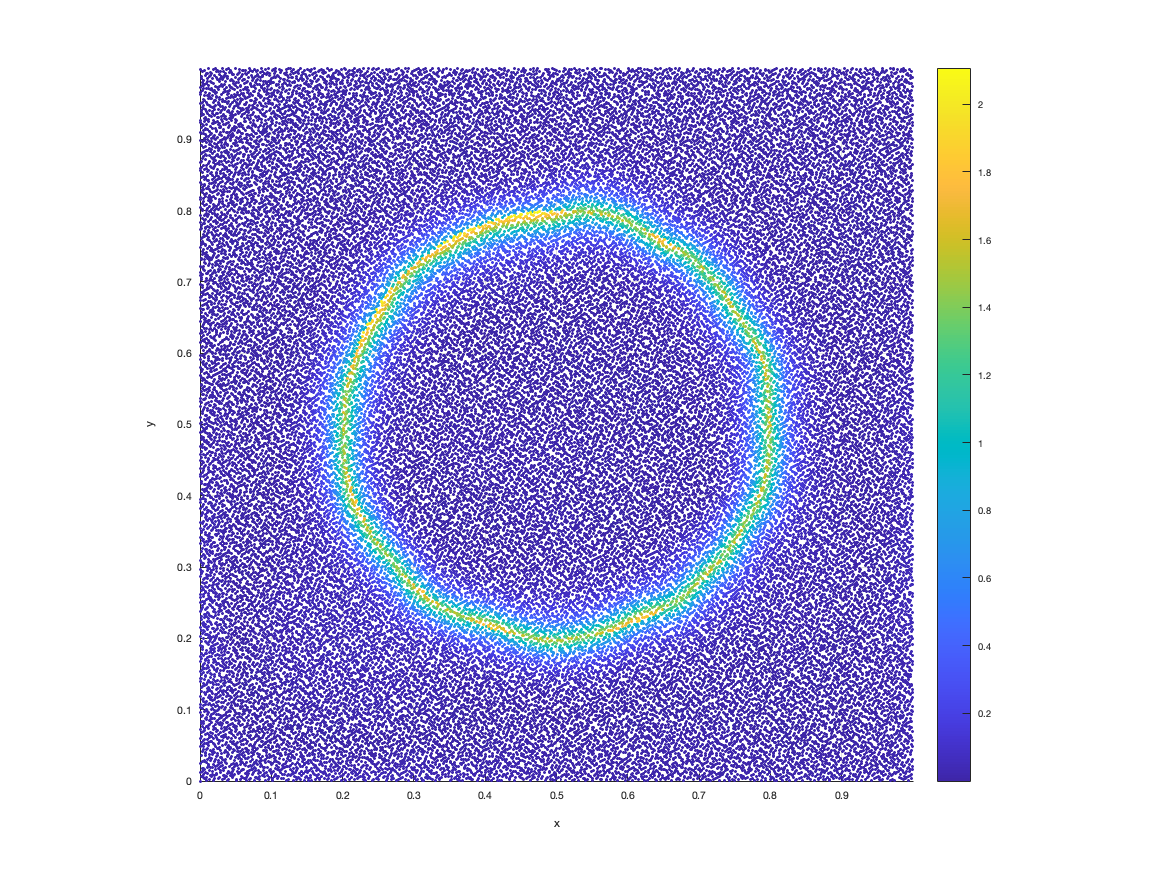}\\
		\end{tabular}
\end{center}
			\caption{Approximation of the piecewise smooth function \(f_1\) (defined in Eq.~\eqref{frankesdisc}). 
Each figure is arranged in three columns: the left column shows the surface reconstructed using either the linear RBF\(_{\text{M2}}\) method or its data-dependent counterpart NL-RBF\(_{\text{M2}}\). The central column displays the same approximation from a top (cenital) view. 
The right column depicts the pointwise absolute error over the evaluation grid.
The figure contains four rows: 
Row~1 shows the linear RBF\(_{\text{M2}}\) approximation using gridded points, 
Row~2 shows the data-dependent NL-RBF\(_{\text{M2}}\) approximation on the same grid, 
Row~3 presents the linear RBF\(_{\text{M2}}\) approximation computed from Halton points, 
and Row~4 displays the corresponding data-dependent NL-RBF\(_{\text{M2}}\) result.}
		\label{exp2_2D}
	\end{figure}

	\begin{figure}[htbp!]
\begin{center}
		\begin{tabular}{ccc}
	\hspace{-1cm}\includegraphics[width=6cm]{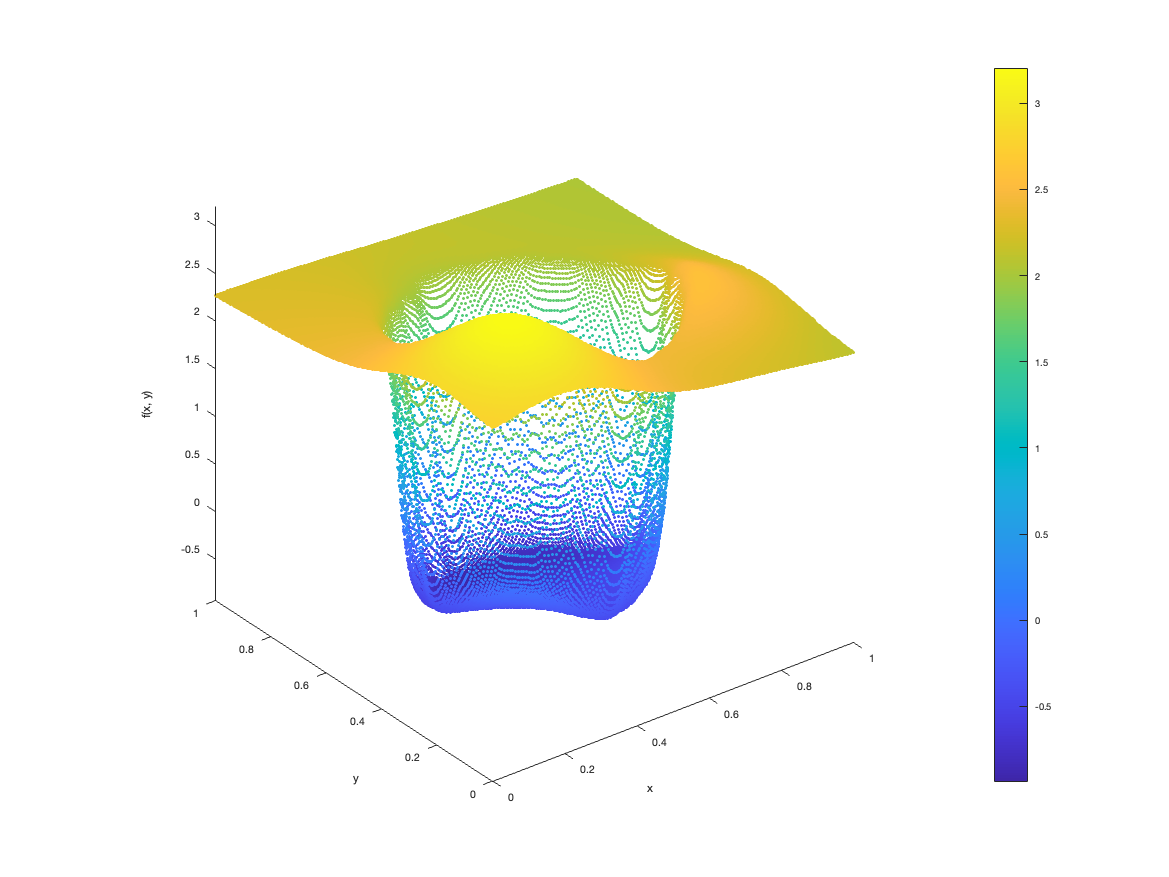} & 	\hspace{-0.9cm}\includegraphics[width=6cm]{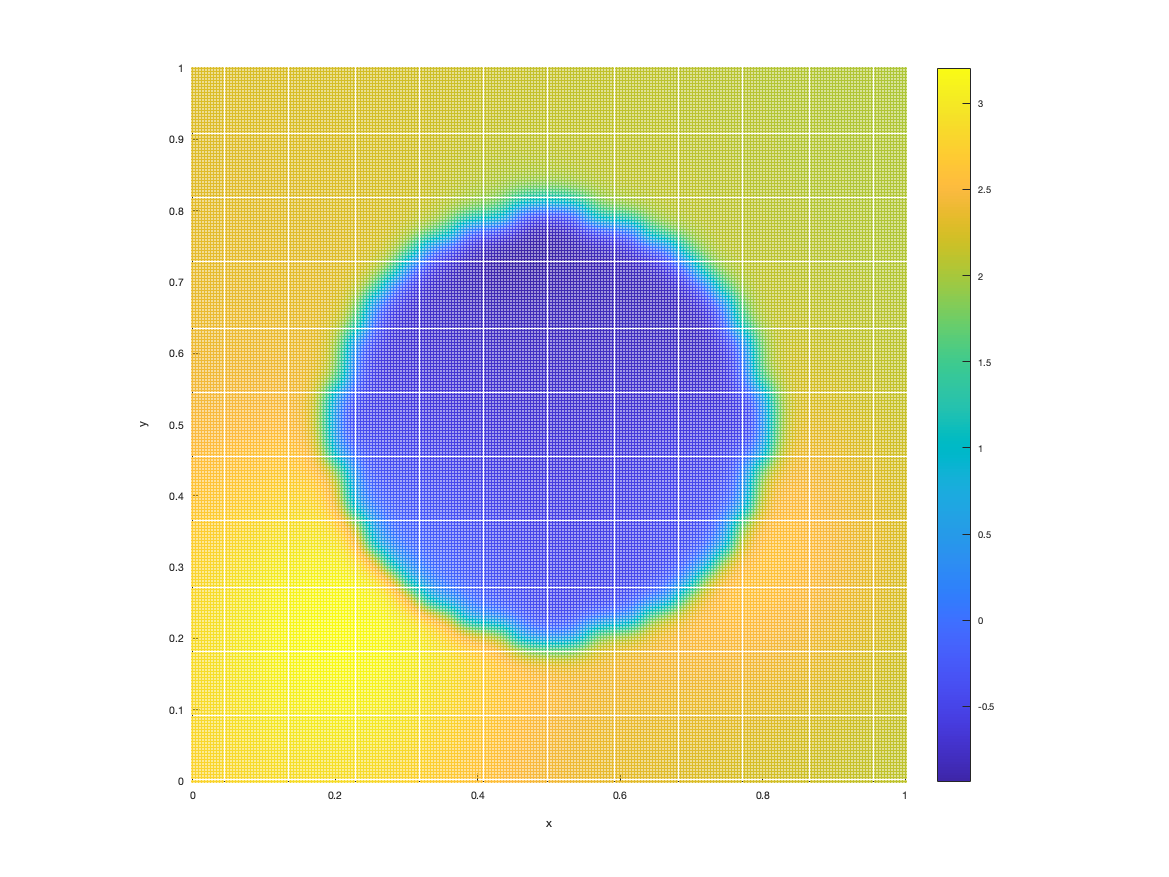} & \hspace{-0.9cm}\includegraphics[width=6cm]{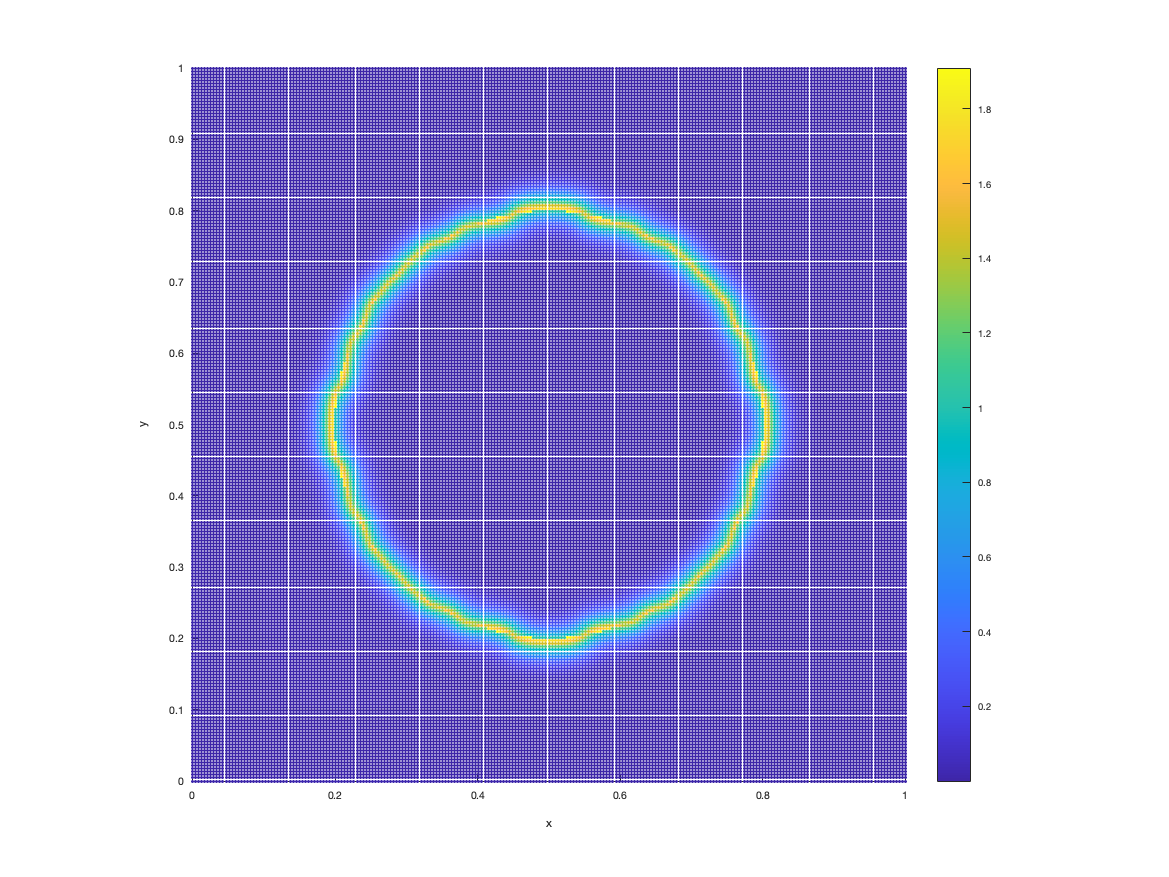}\\
\hspace{-1cm}\includegraphics[width=6cm]{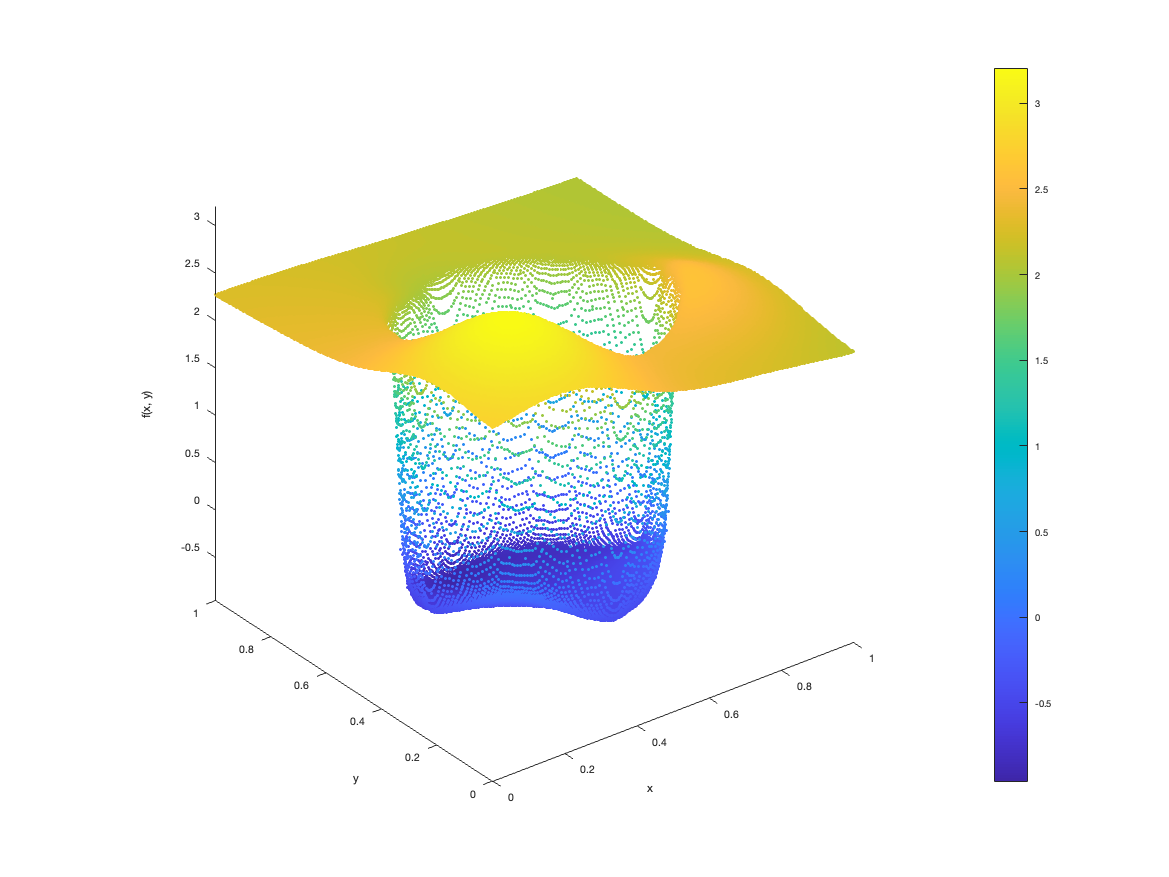} & 	\hspace{-0.9cm}\includegraphics[width=6cm]{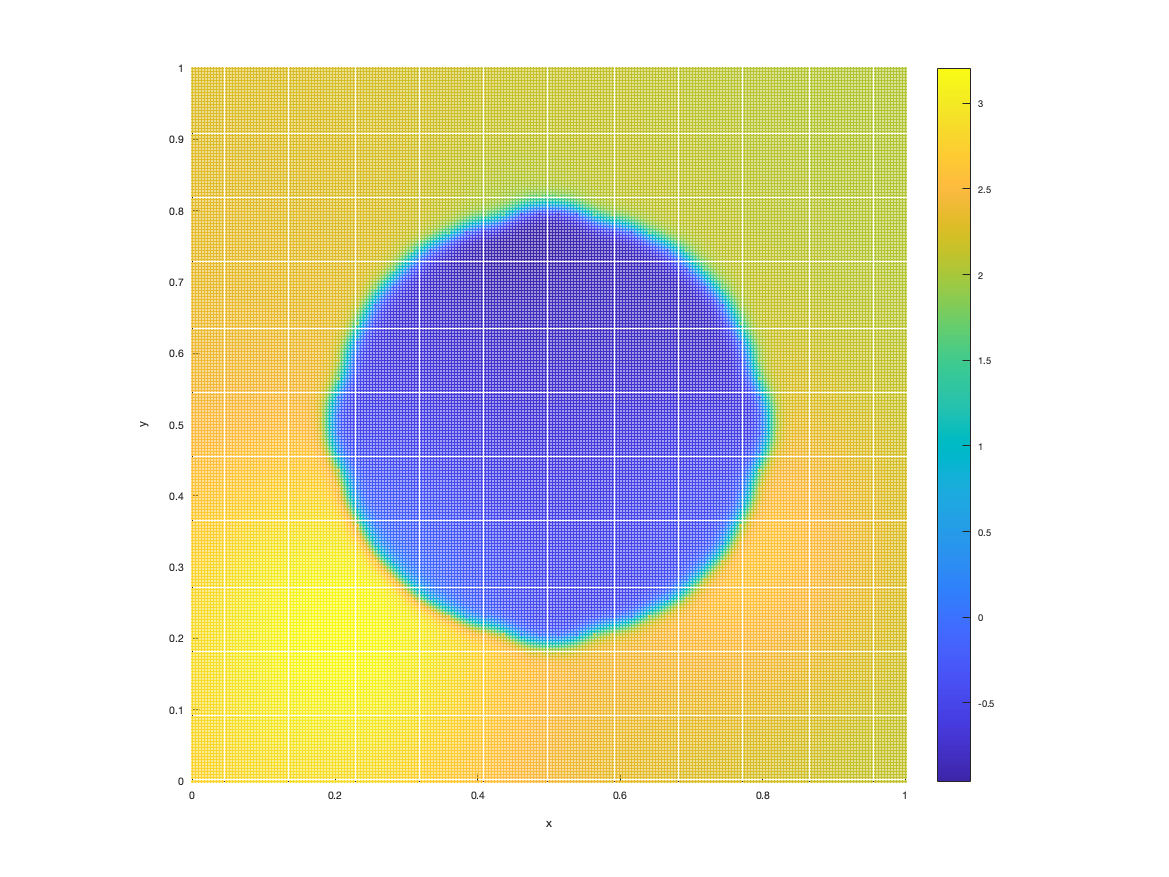} & \hspace{-0.9cm}\includegraphics[width=6cm]{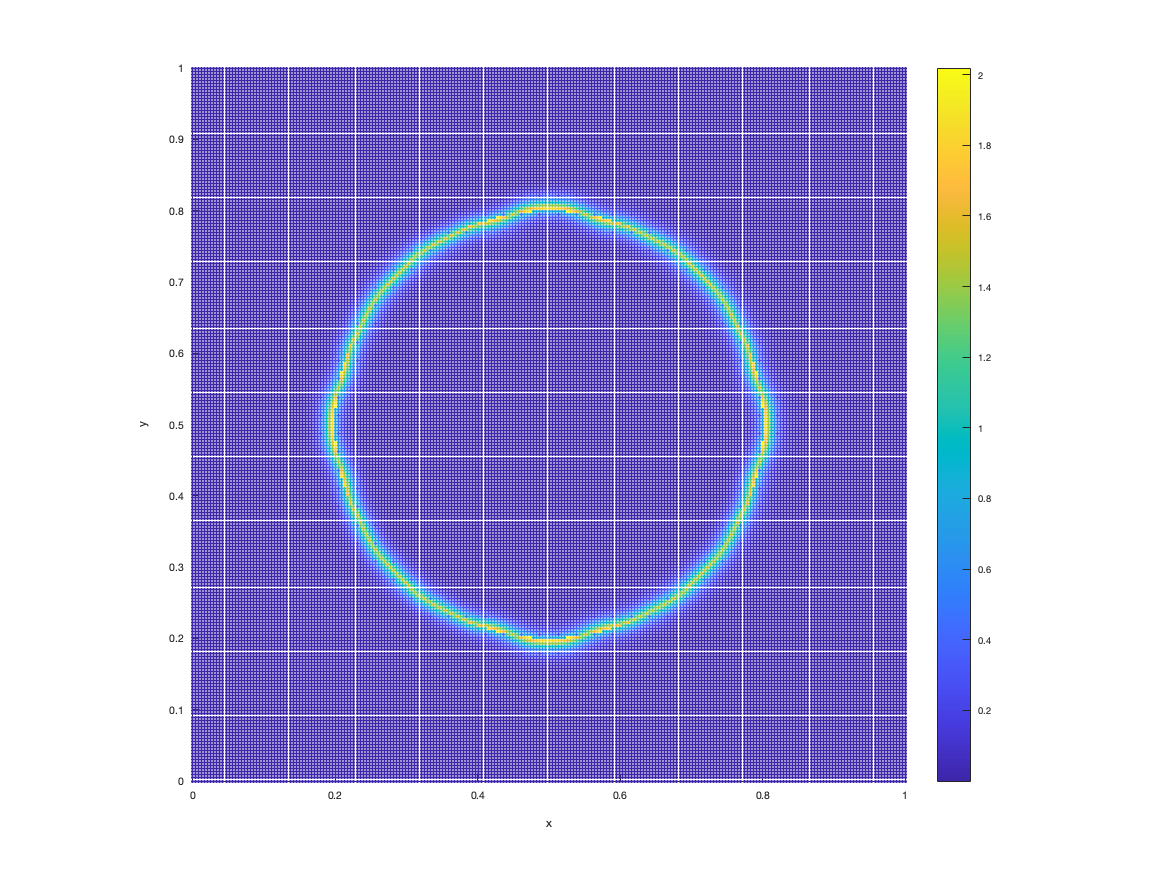}\\
	\hspace{-1cm}\includegraphics[width=6cm]{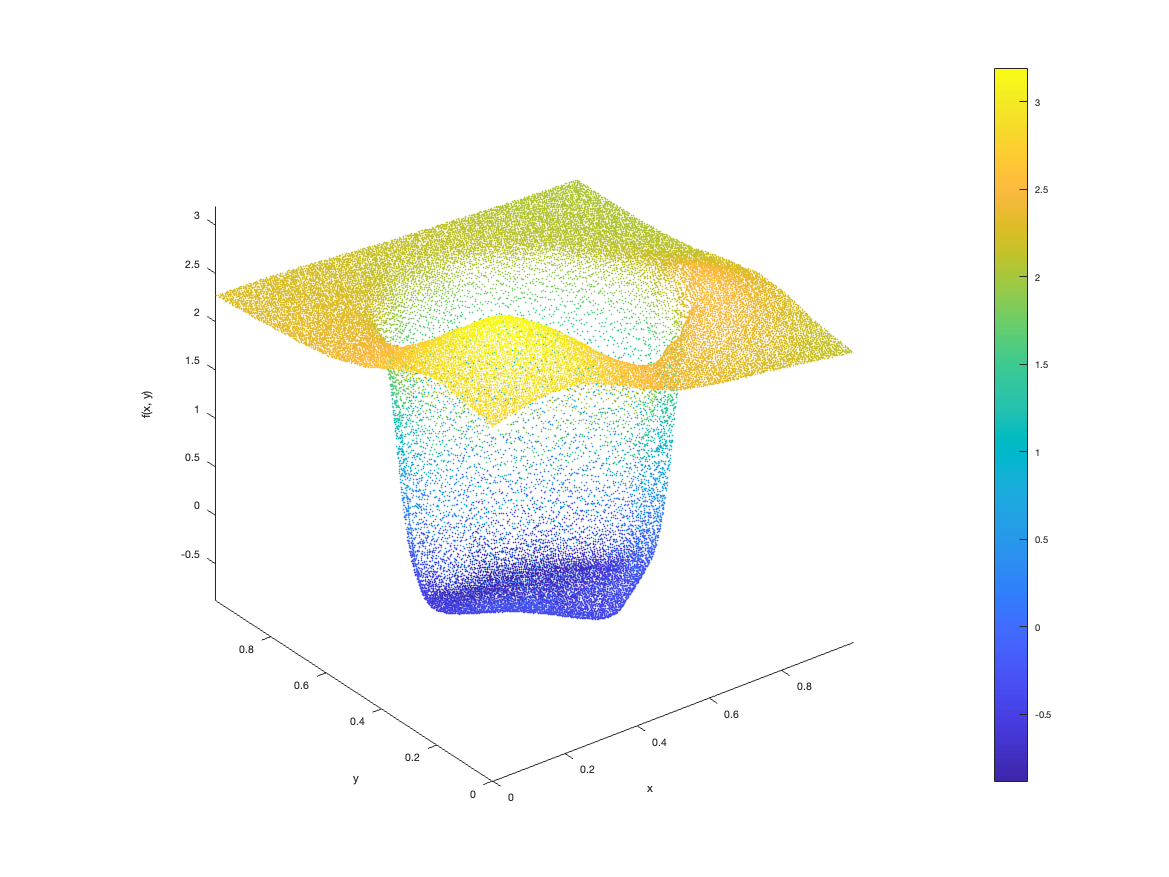} & 	\hspace{-0.9cm}\includegraphics[width=6cm]{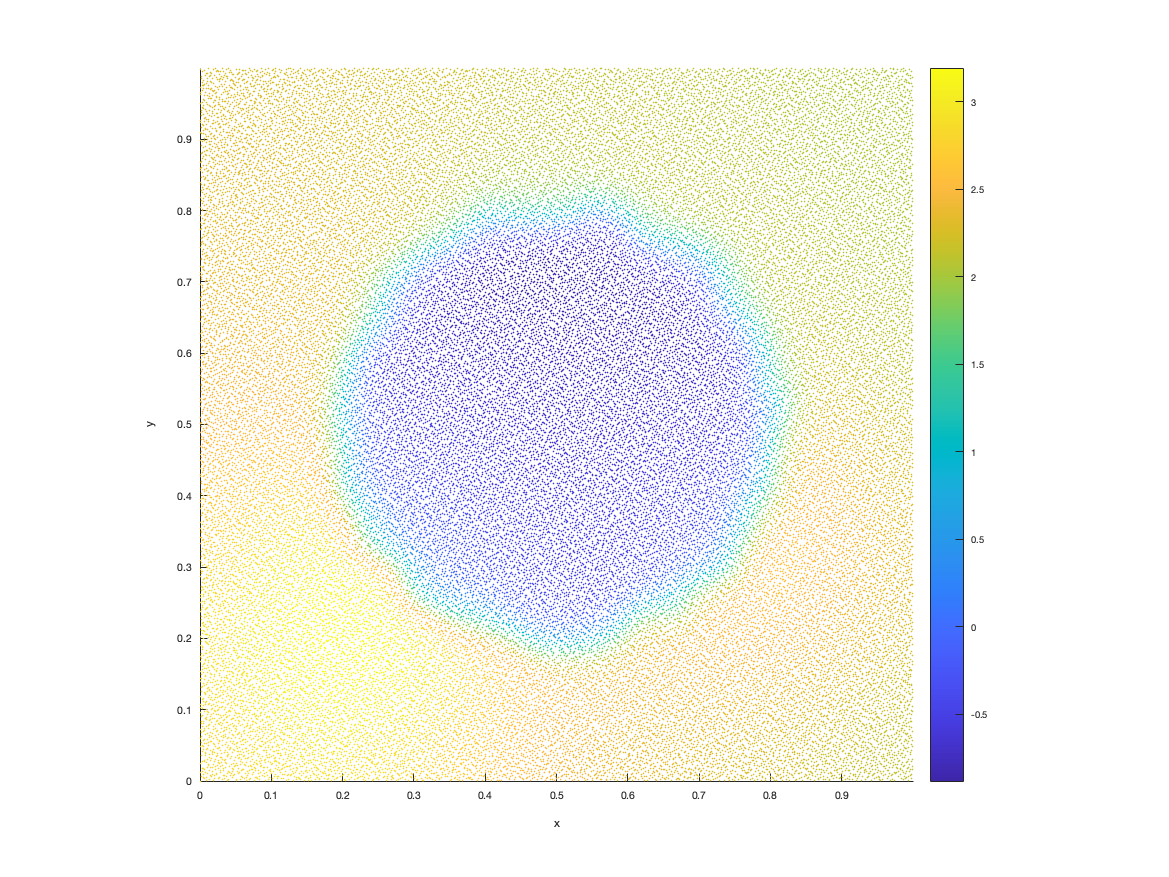} & \hspace{-0.9cm}\includegraphics[width=6cm]{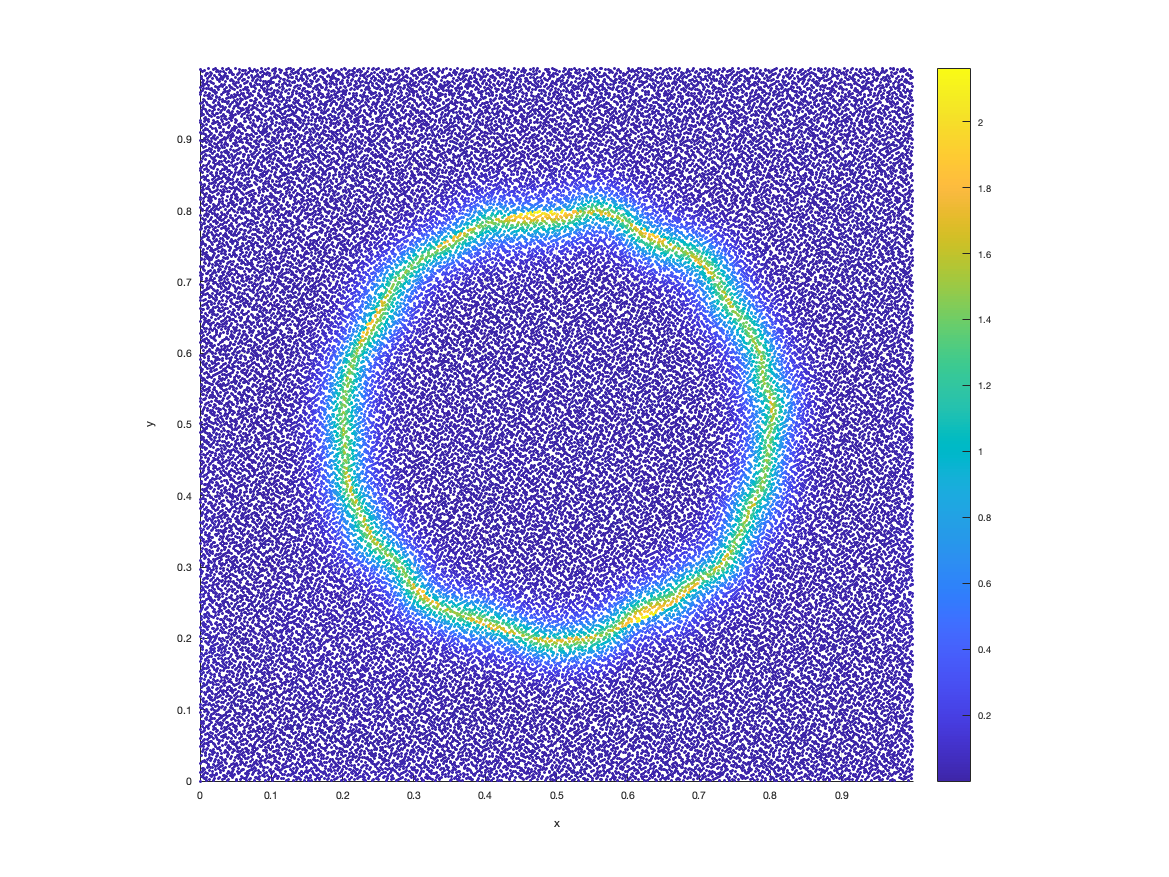}\\
\hspace{-1cm}\includegraphics[width=6cm]{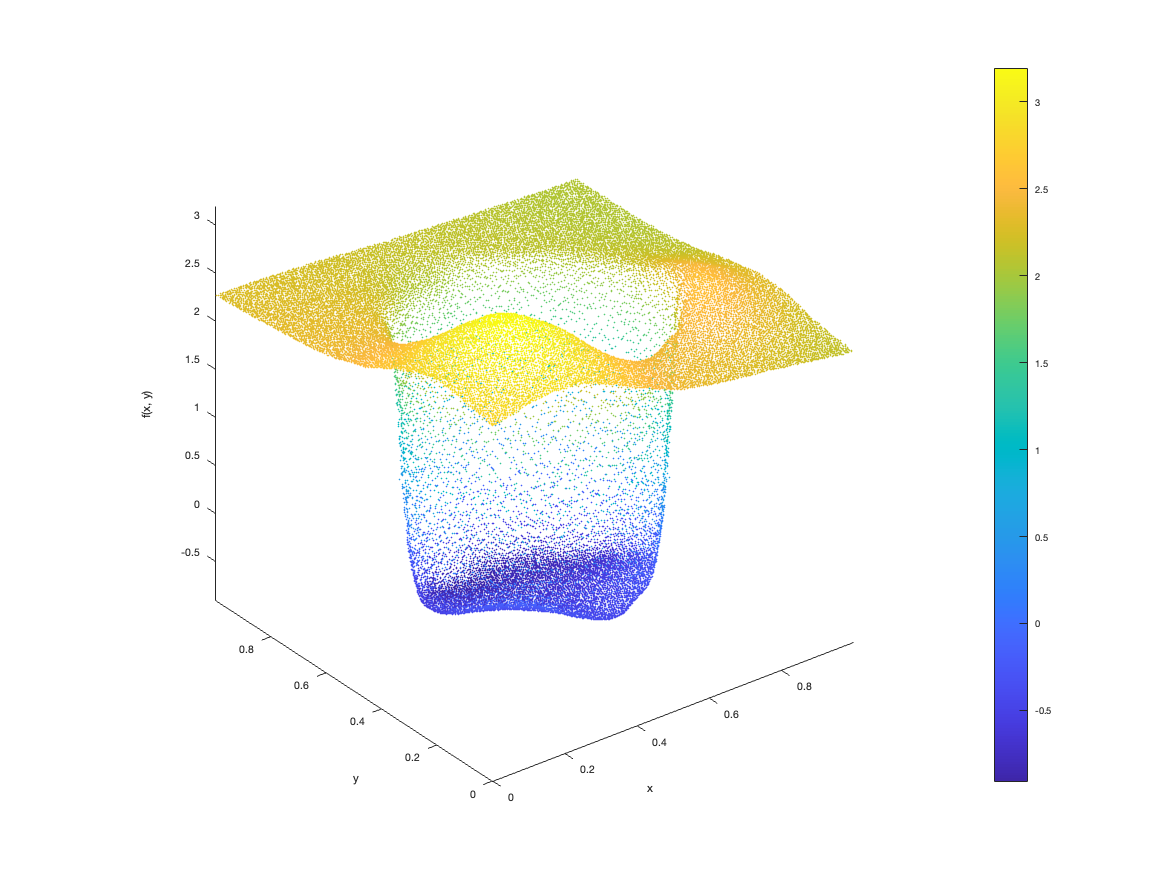} & 	\hspace{-0.9cm}\includegraphics[width=6cm]{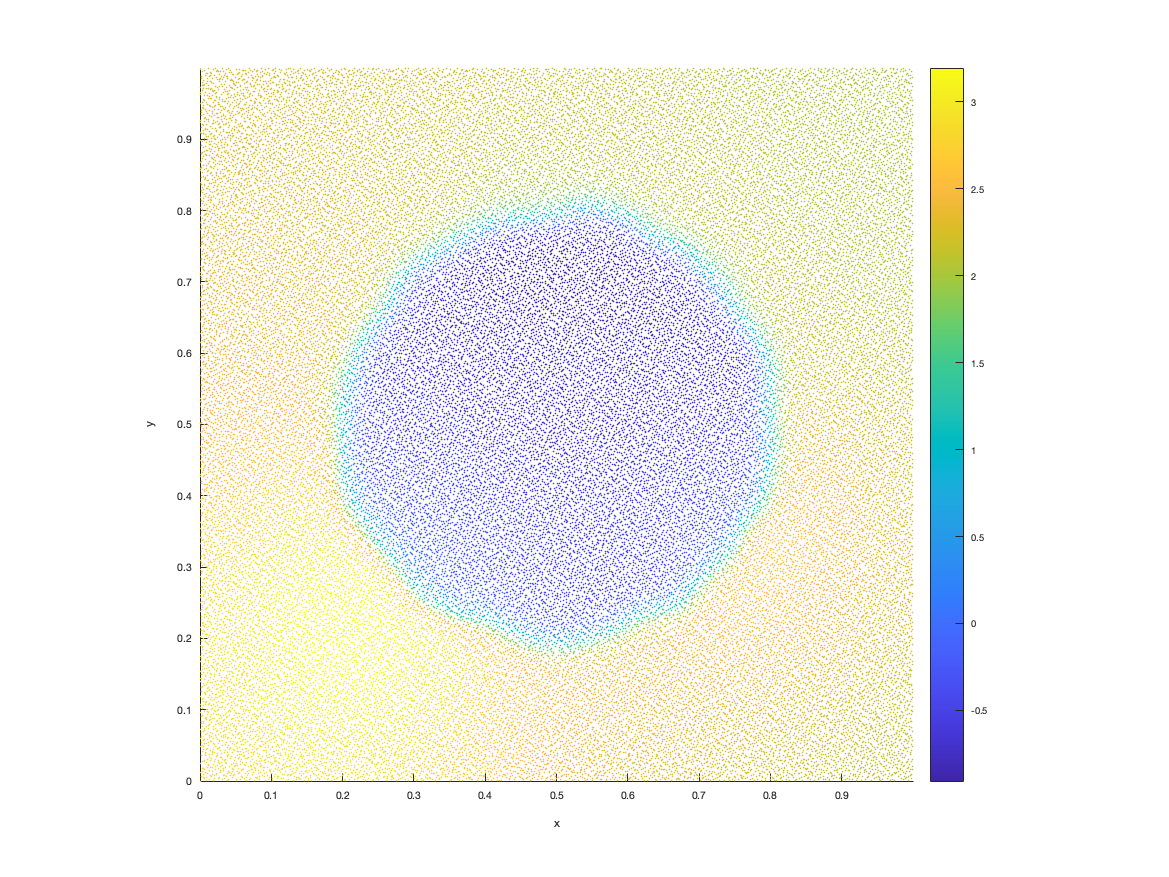} & \hspace{-0.9cm}\includegraphics[width=6cm]{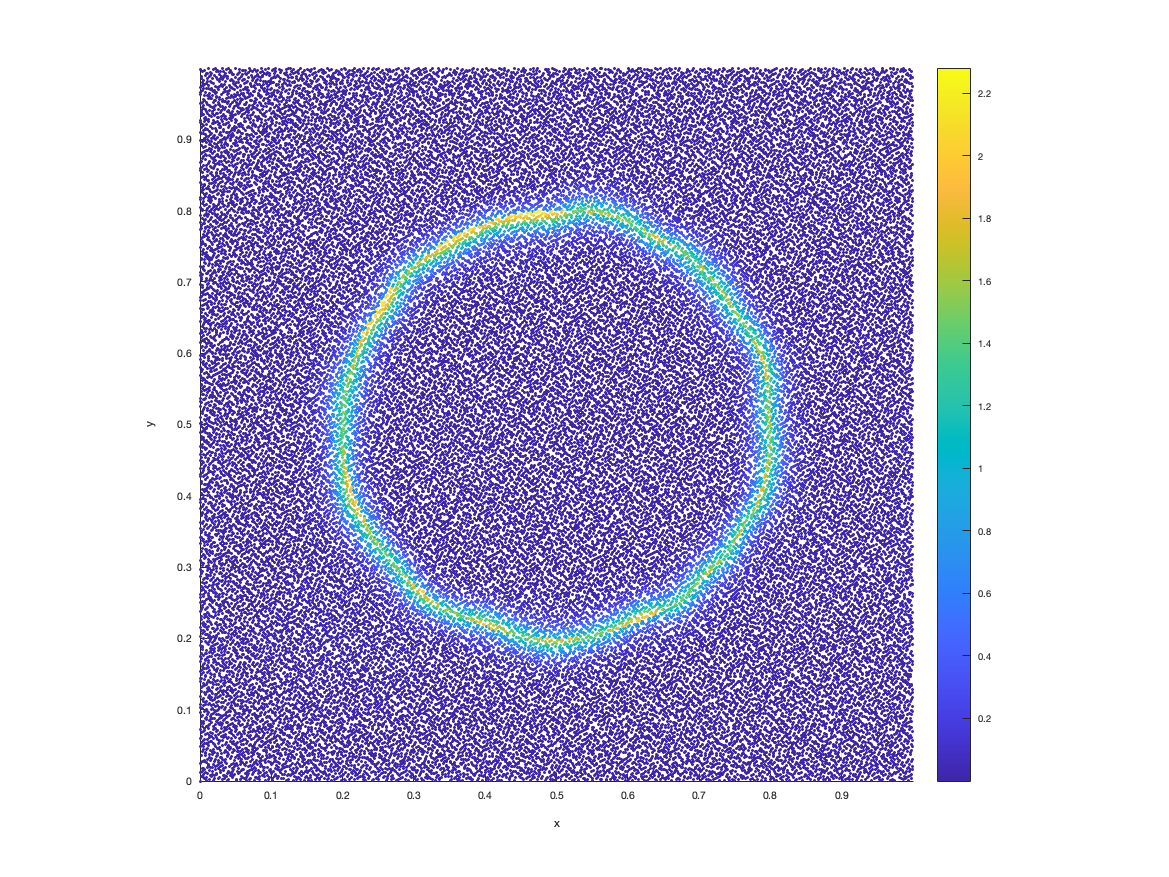}\\
		\end{tabular}
\end{center}
			\caption{Approximation of the piecewise smooth function \(f_1\) (defined in Eq.~\eqref{frankesdisc}). 
Each figure is arranged in three columns: the left column shows the surface reconstructed using either the linear RBF\(_{\text\\{M4}}\) method or its data-dependent counterpart NL-RBF\(_{\text\\{M4}}\). The central column displays the same approximation from a top (cenital) view. 
The right column depicts the pointwise absolute error over the evaluation grid.
The figure contains four rows: 
Row~1 shows the linear RBF\(_{\text\\{M4}}\) approximation using gridded points, 
Row~2 shows the data-dependent NL-RBF\(_{\text\\{M4}}\) approximation on the same grid, 
Row~3 presents the linear RBF\(_{\text\\{M4}}\) approximation computed from Halton points, 
and Row~4 displays the corresponding data-dependent NL-RBF\(_{\text\\{M4}}\) result.}

		\label{exp3_2D}
	\end{figure}

	\begin{figure}[htbp!]
\begin{center}
		\begin{tabular}{ccc}
	\hspace{-1cm}\includegraphics[width=6cm]{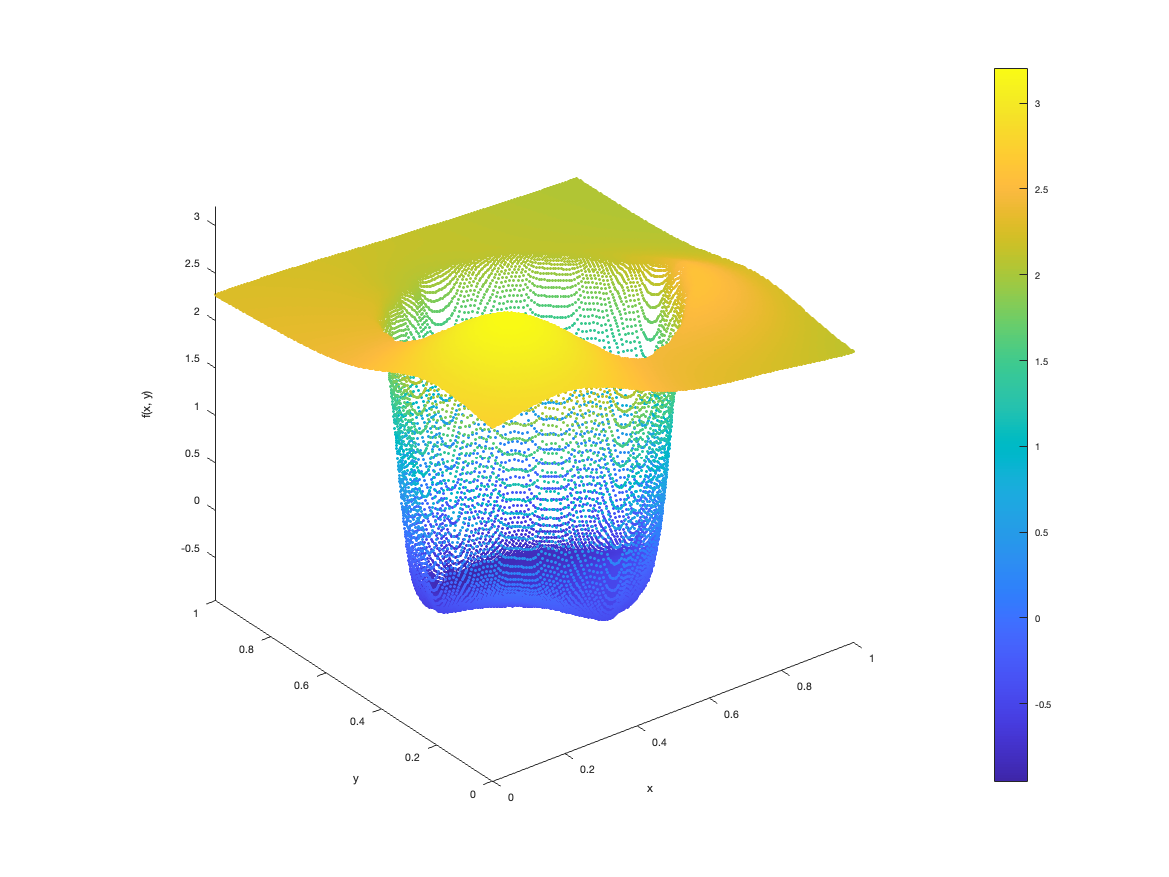} & 	\hspace{-0.9cm}\includegraphics[width=6cm]{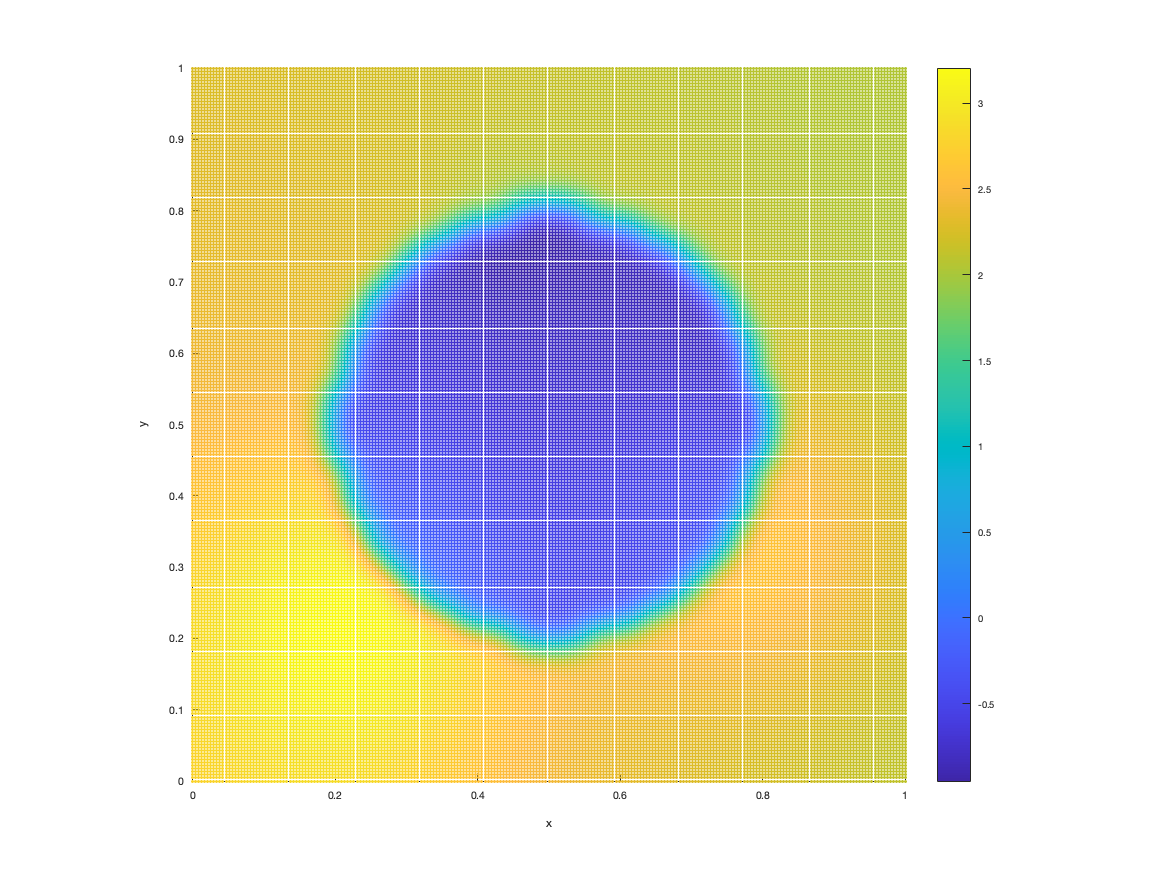} & \hspace{-0.9cm}\includegraphics[width=6cm]{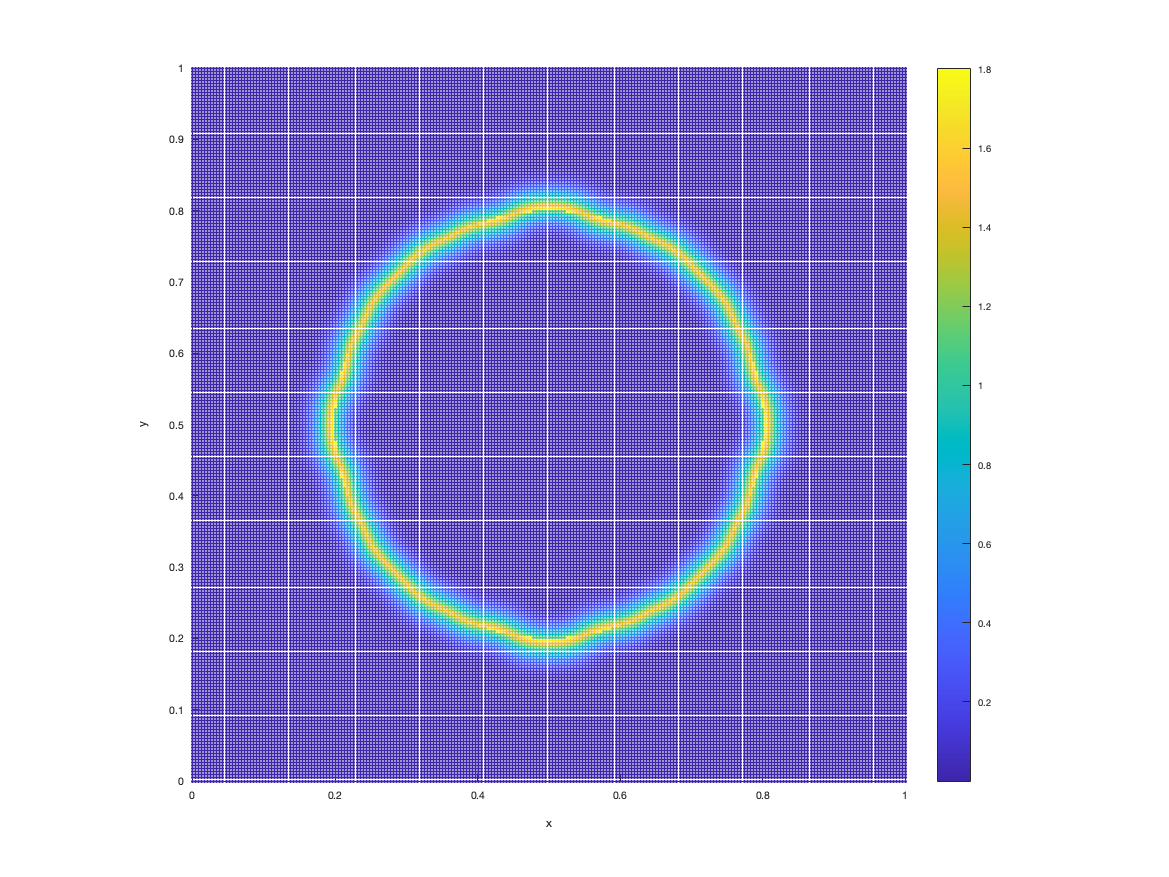}\\
\hspace{-1cm}\includegraphics[width=6cm]{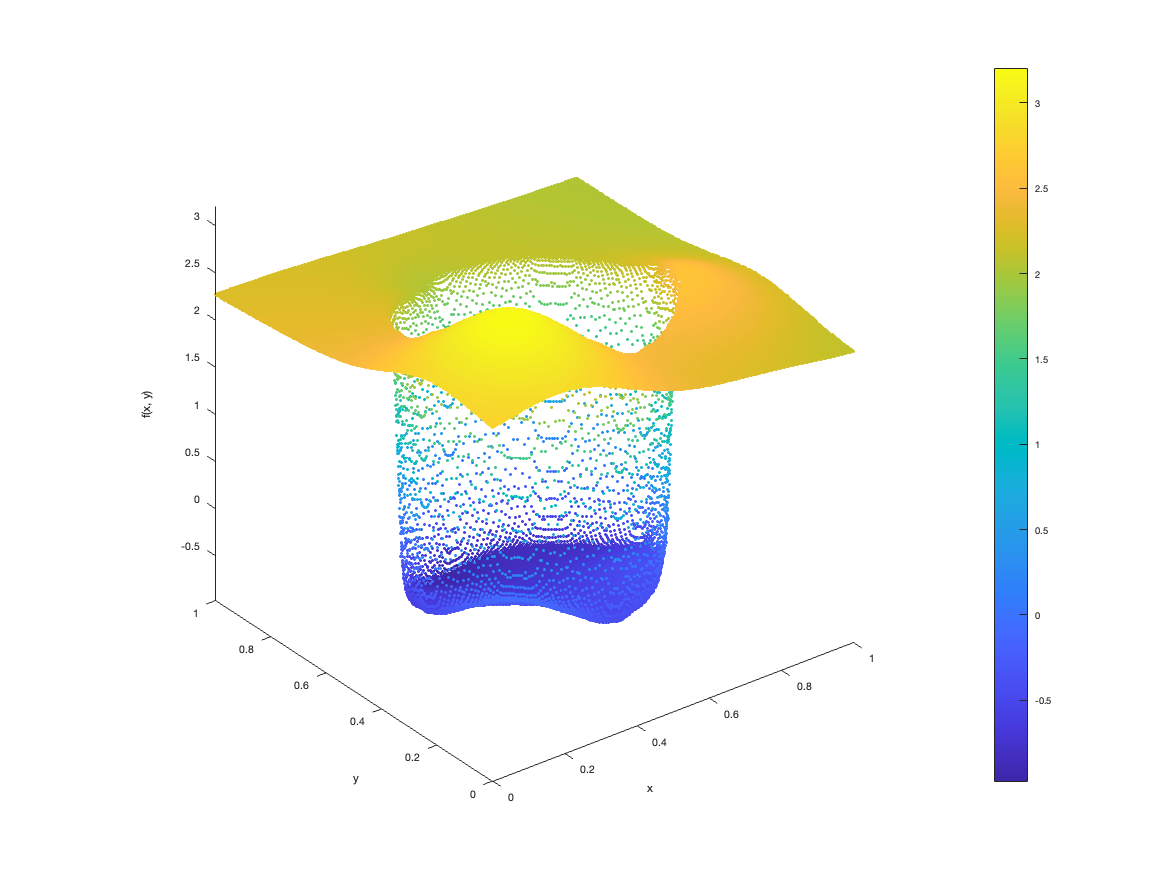} & 	\hspace{-0.9cm}\includegraphics[width=6cm]{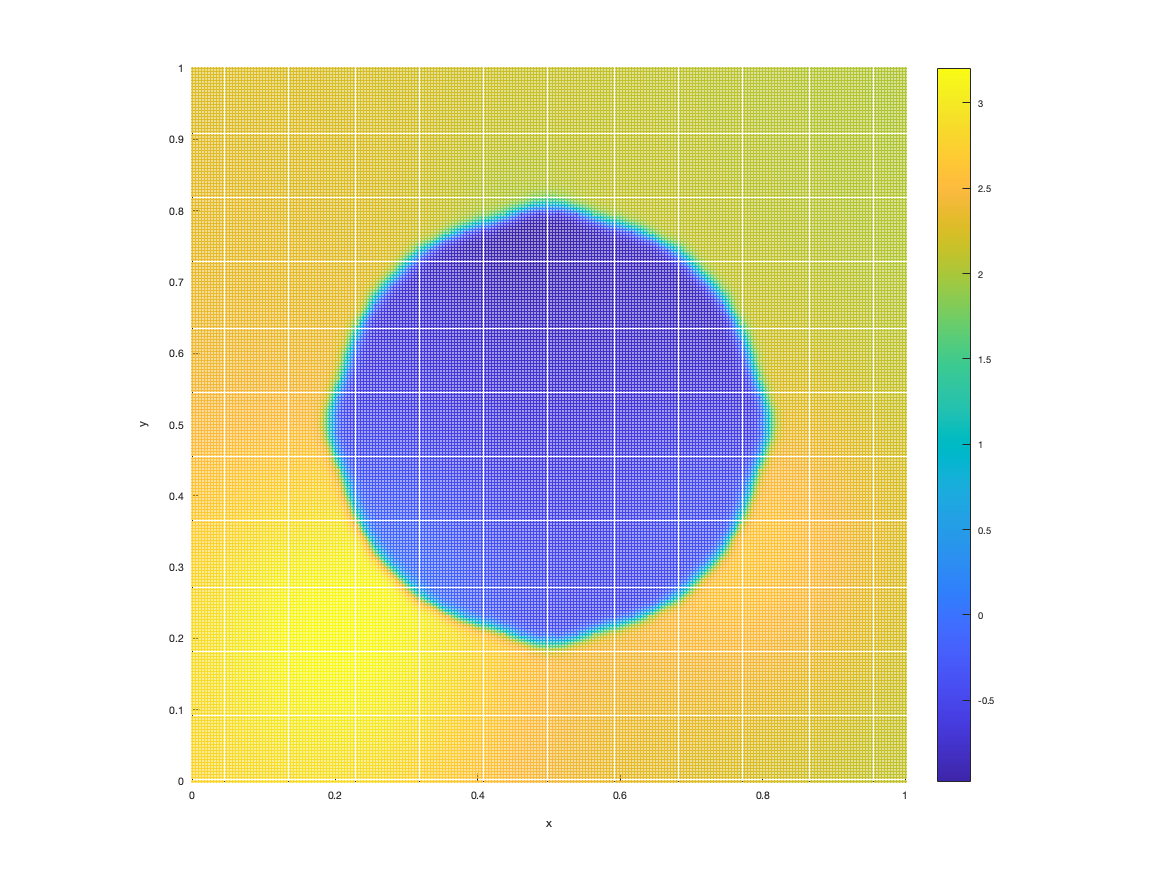} & \hspace{-0.9cm}\includegraphics[width=6cm]{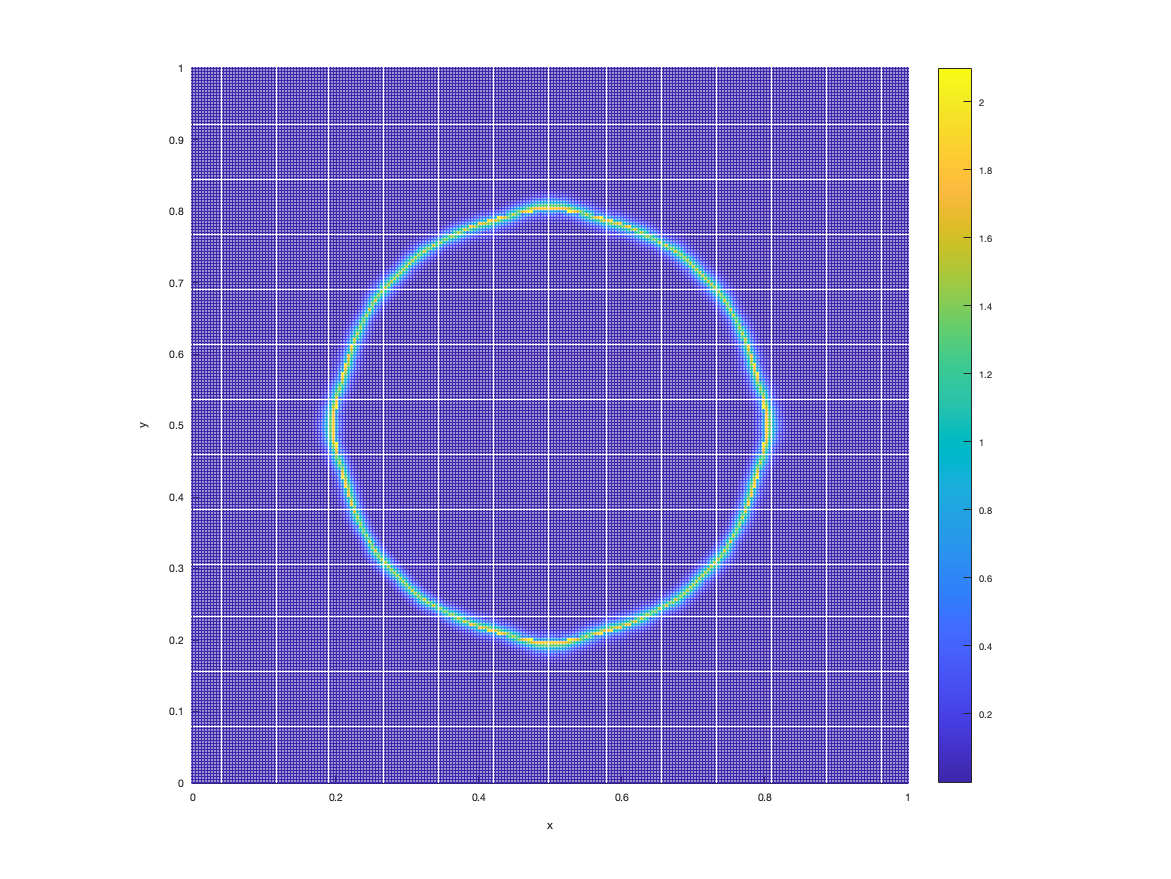}\\
	\hspace{-1cm}\includegraphics[width=6cm]{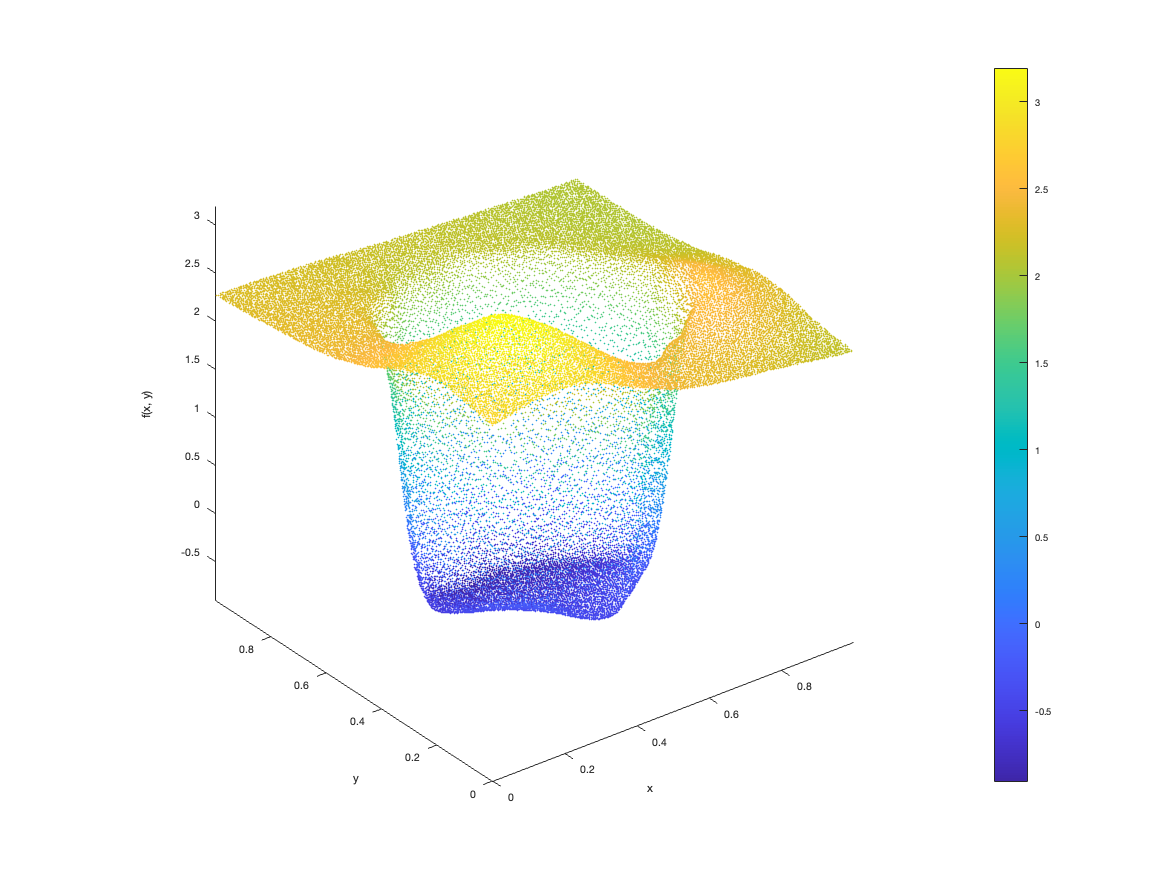} & 	\hspace{-0.9cm}\includegraphics[width=6cm]{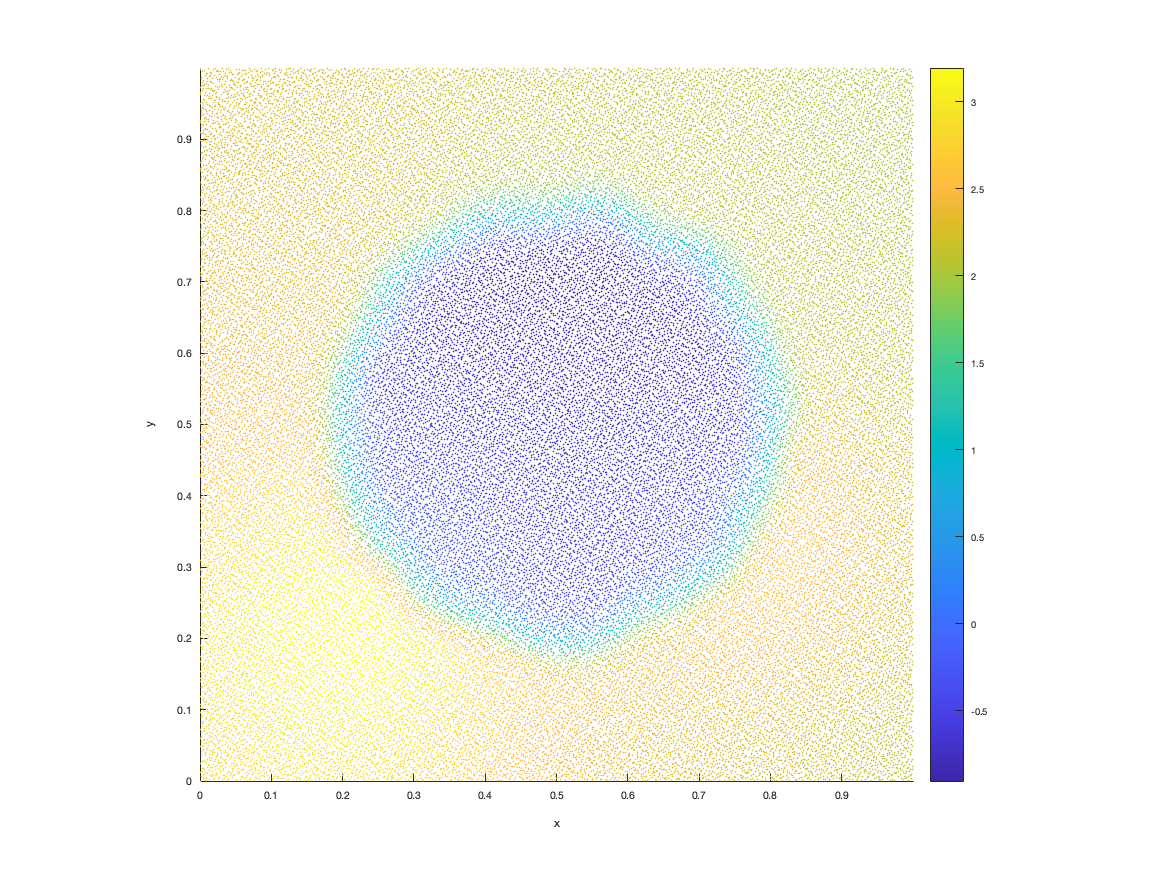} & \hspace{-0.9cm}\includegraphics[width=6cm]{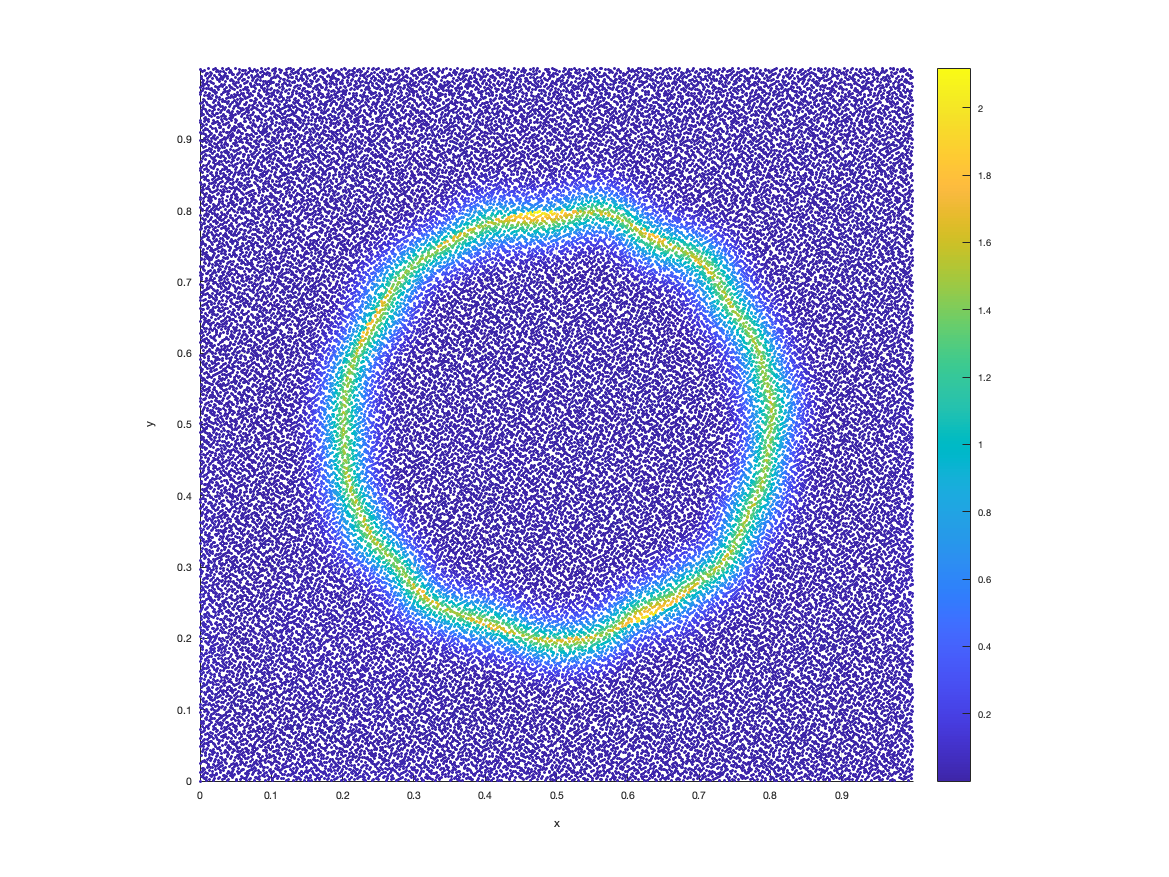}\\
\hspace{-1cm}\includegraphics[width=6cm]{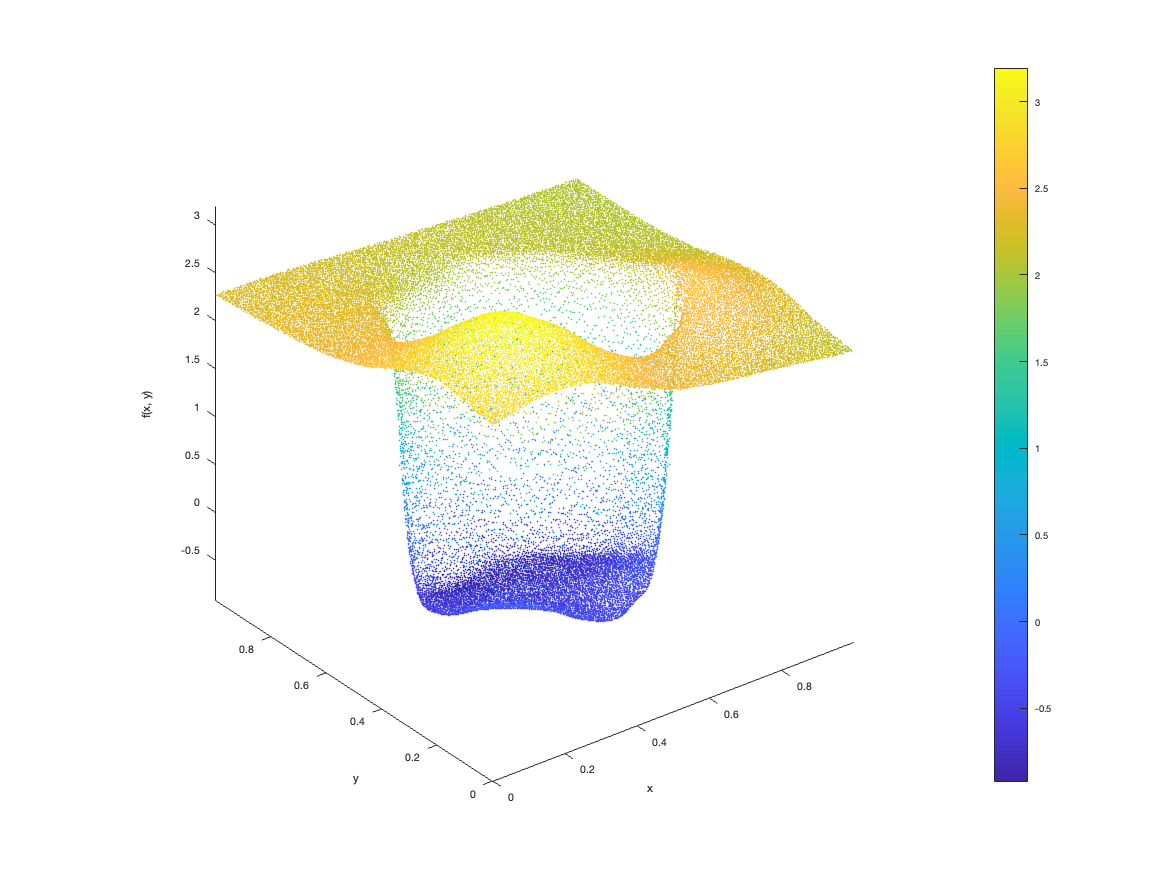} & 	\hspace{-0.9cm}\includegraphics[width=6cm]{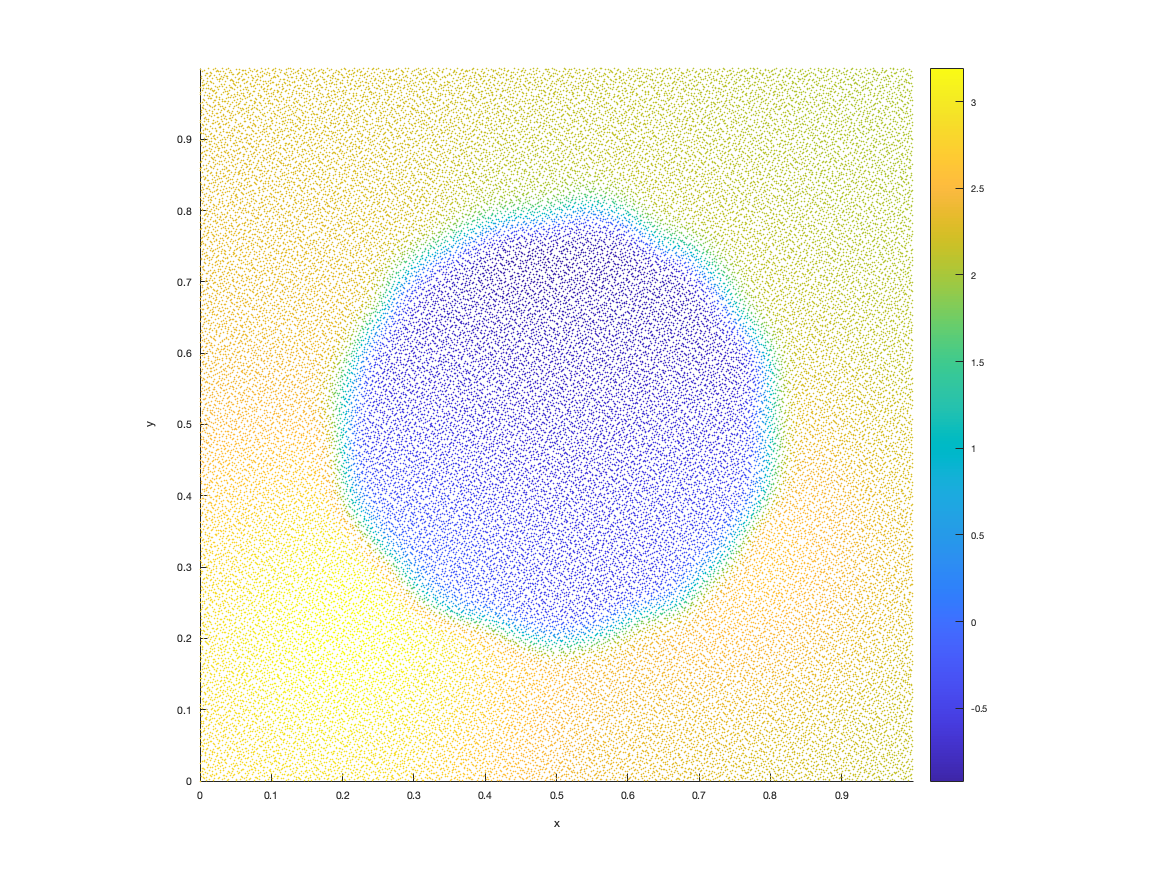} & \hspace{-0.9cm}\includegraphics[width=6cm]{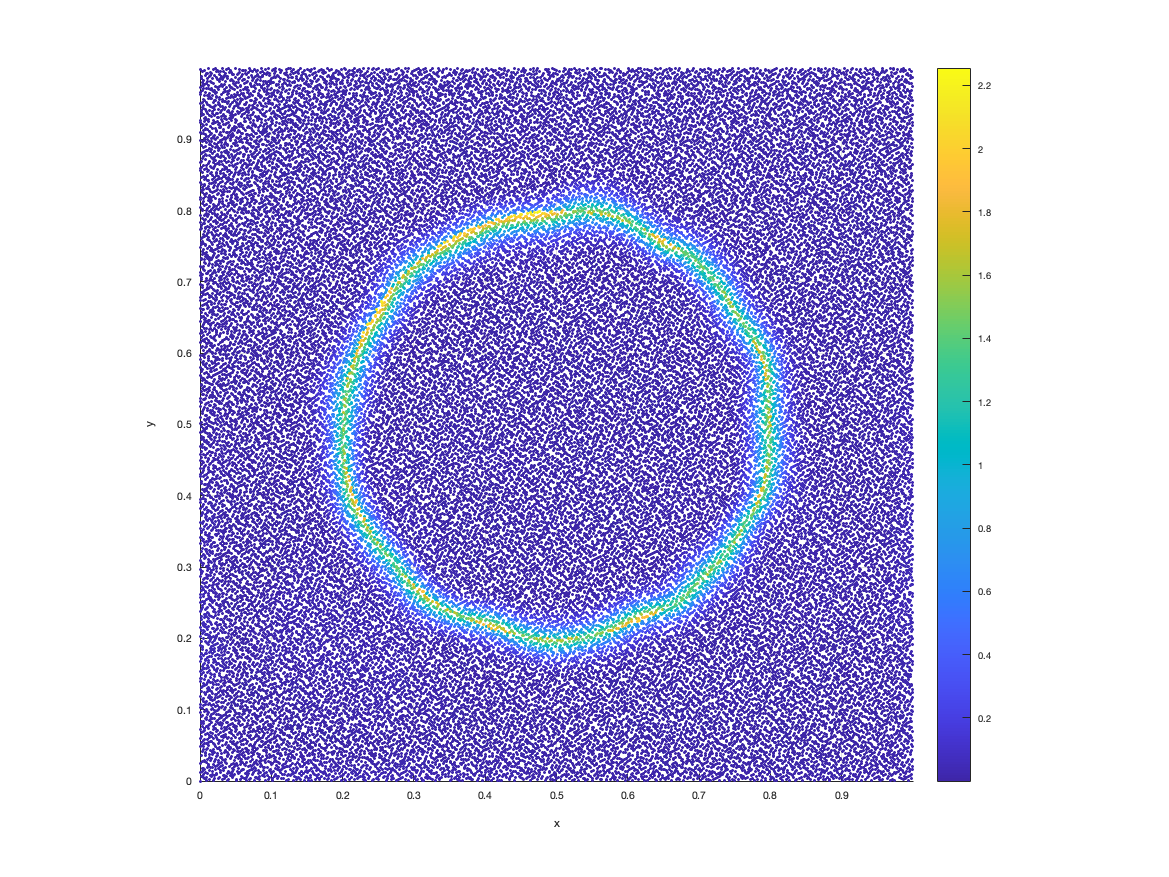}\\
		\end{tabular}
\end{center}
			\caption{Approximation of the piecewise smooth function \(f_1\) (defined in Eq.~\eqref{frankesdisc}). 
Each figure is arranged in three columns: the left column shows the surface reconstructed using either the linear RBF\(_{\text{{W2}}}\) method or its data-dependent counterpart NL-RBF\(_{\text{{W2}}}\). The central column displays the same approximation from a top (cenital) view. 
The right column depicts the pointwise absolute error over the evaluation grid.
The figure contains four rows: 
Row~1 shows the linear RBF\(_{\text{{W2}}}\) approximation using gridded points, 
Row~2 shows the data-dependent NL-RBF\(_{\text{{W2}}}\) approximation on the same grid, 
Row~3 presents the linear RBF\(_{\text{{W2}}}\) approximation computed from Halton points, 
and Row~4 displays the corresponding data-dependent NL-RBF\(_{\text{{W2}}}\) result.}
		\label{exp4_2D}
	\end{figure}

	\begin{figure}[htbp!]
\begin{center}
		\begin{tabular}{ccc}
	\hspace{-1cm}\includegraphics[width=6cm]{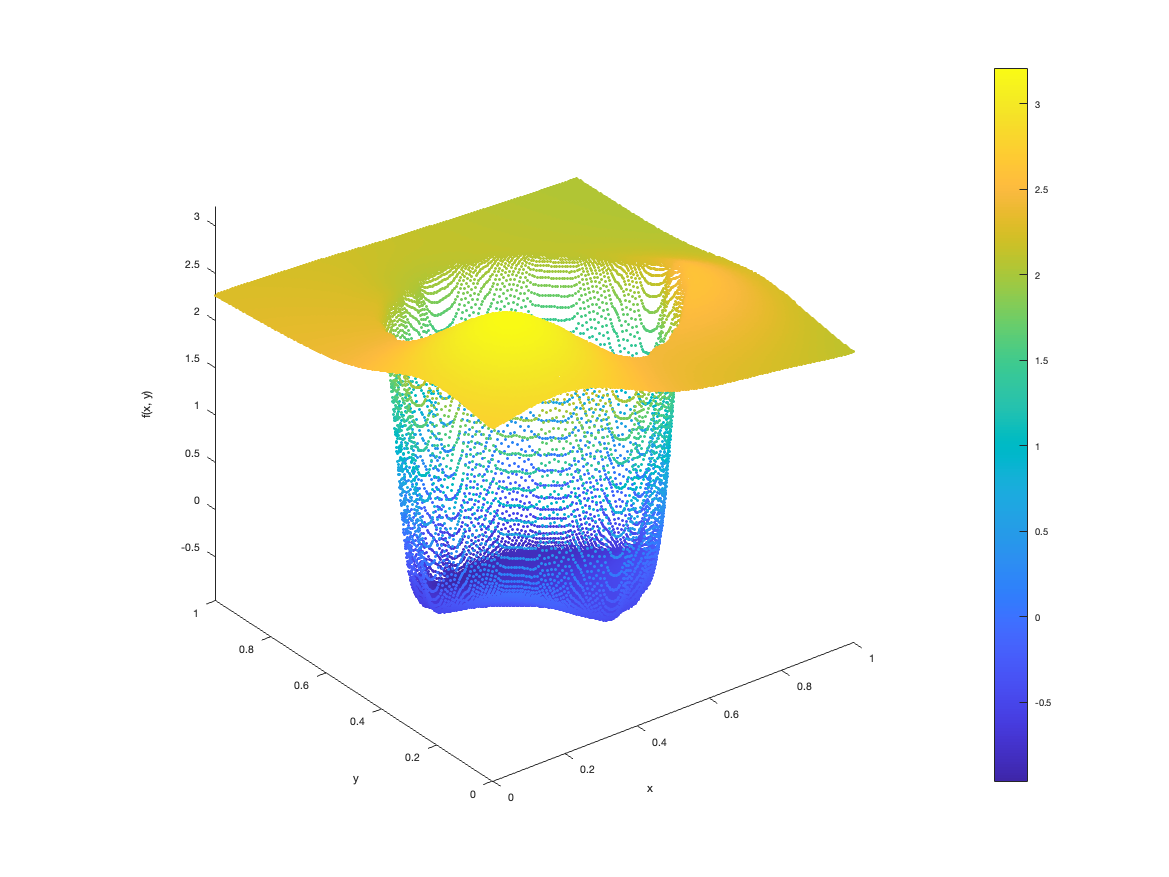} & 	\hspace{-0.9cm}\includegraphics[width=6cm]{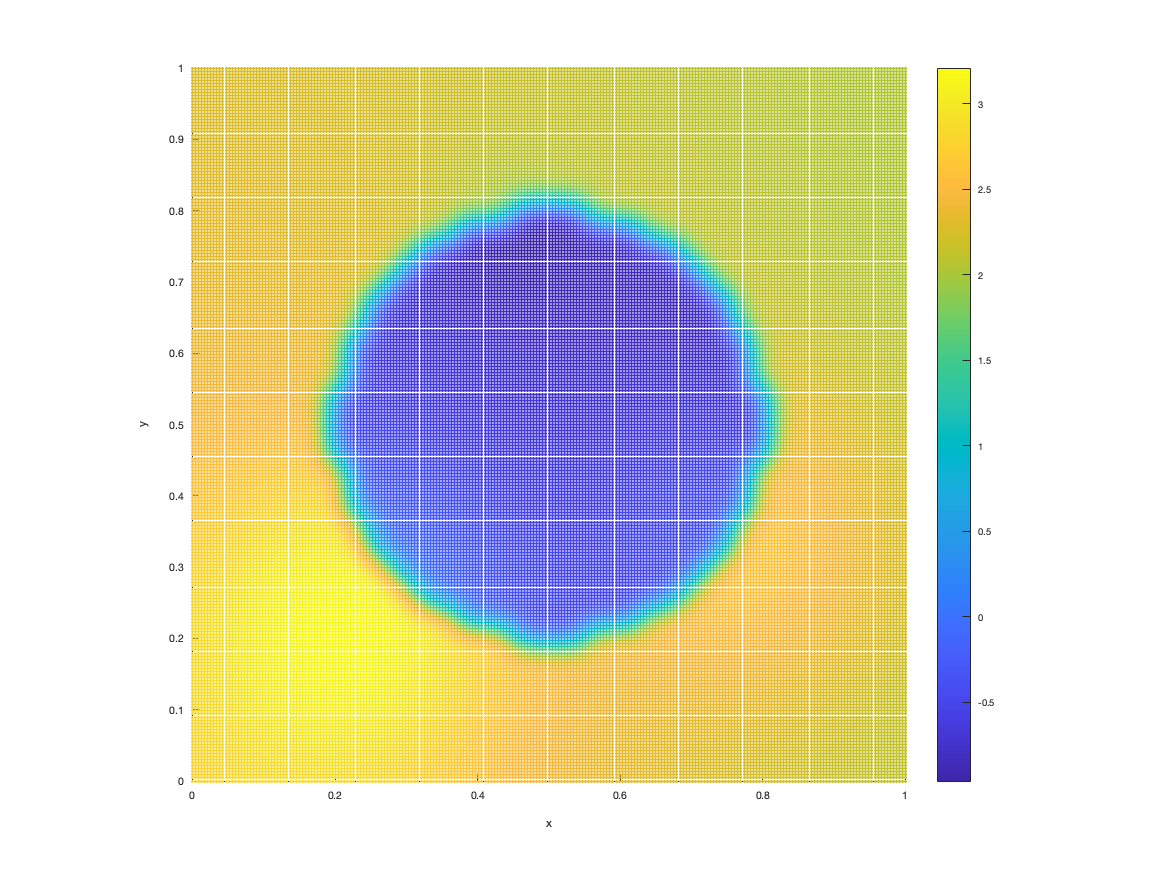} & \hspace{-0.9cm}\includegraphics[width=6cm]{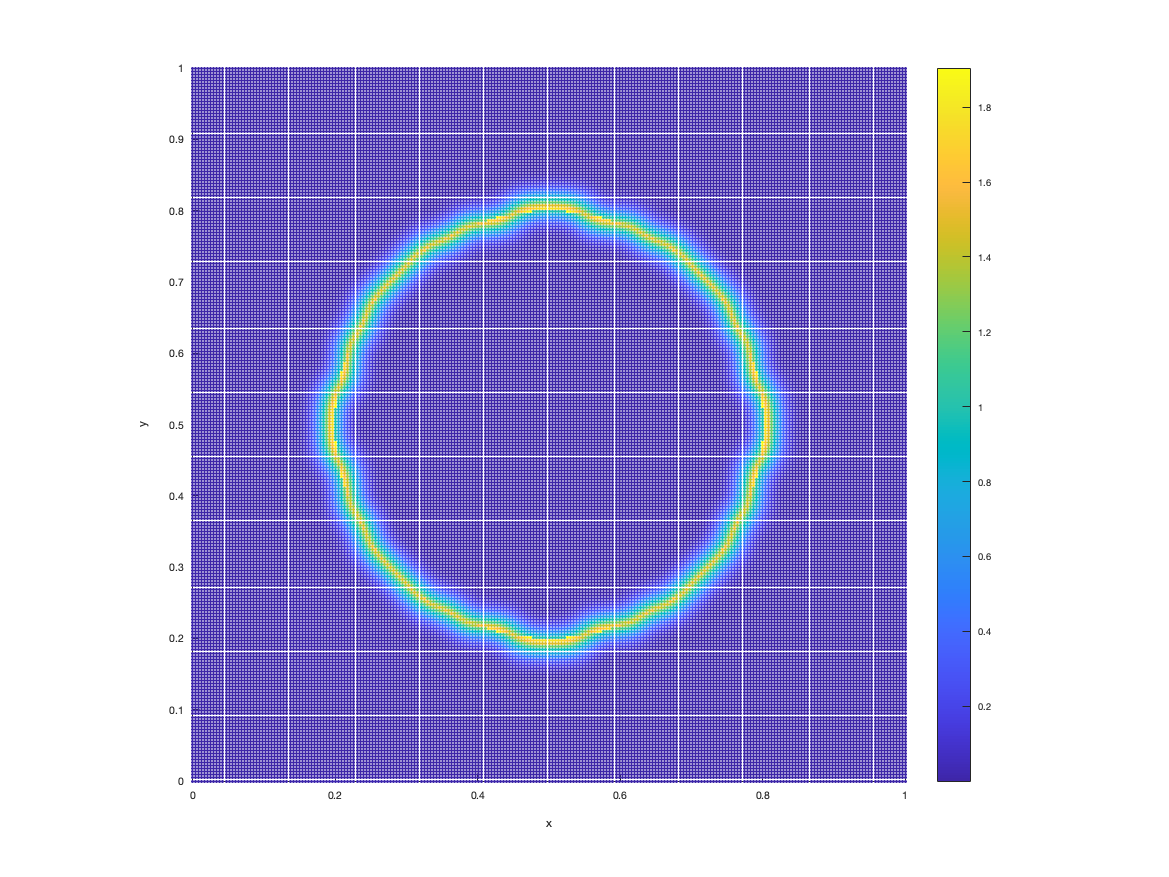}\\
\hspace{-1cm}\includegraphics[width=6cm]{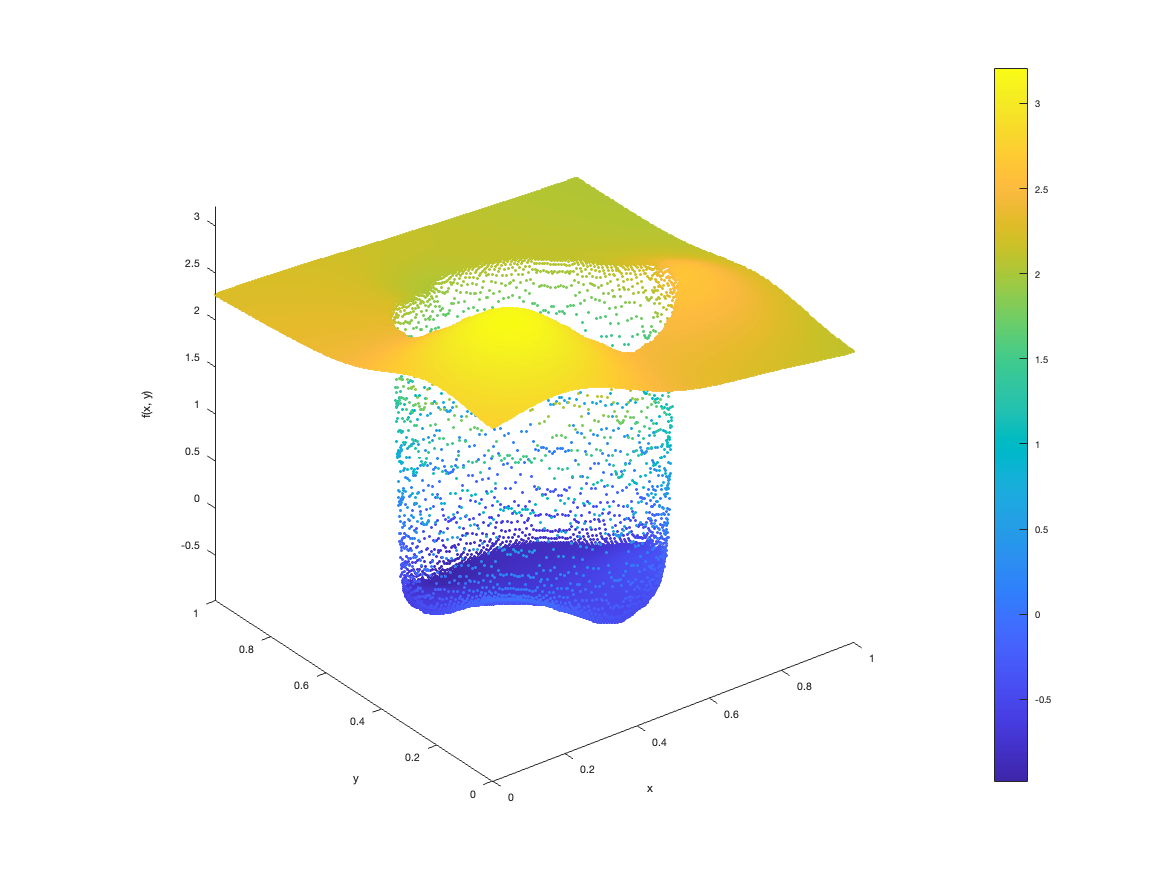} & 	\hspace{-0.9cm}\includegraphics[width=6cm]{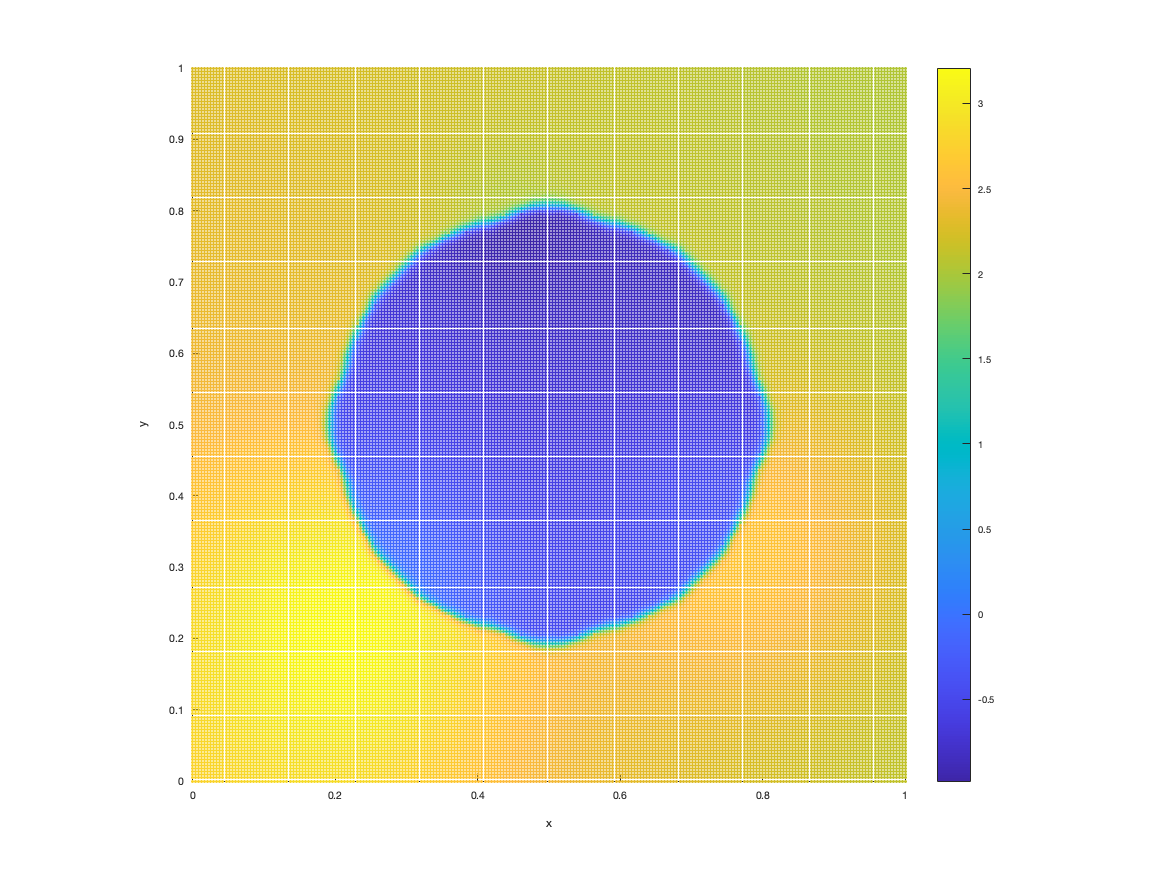} & \hspace{-0.9cm}\includegraphics[width=6cm]{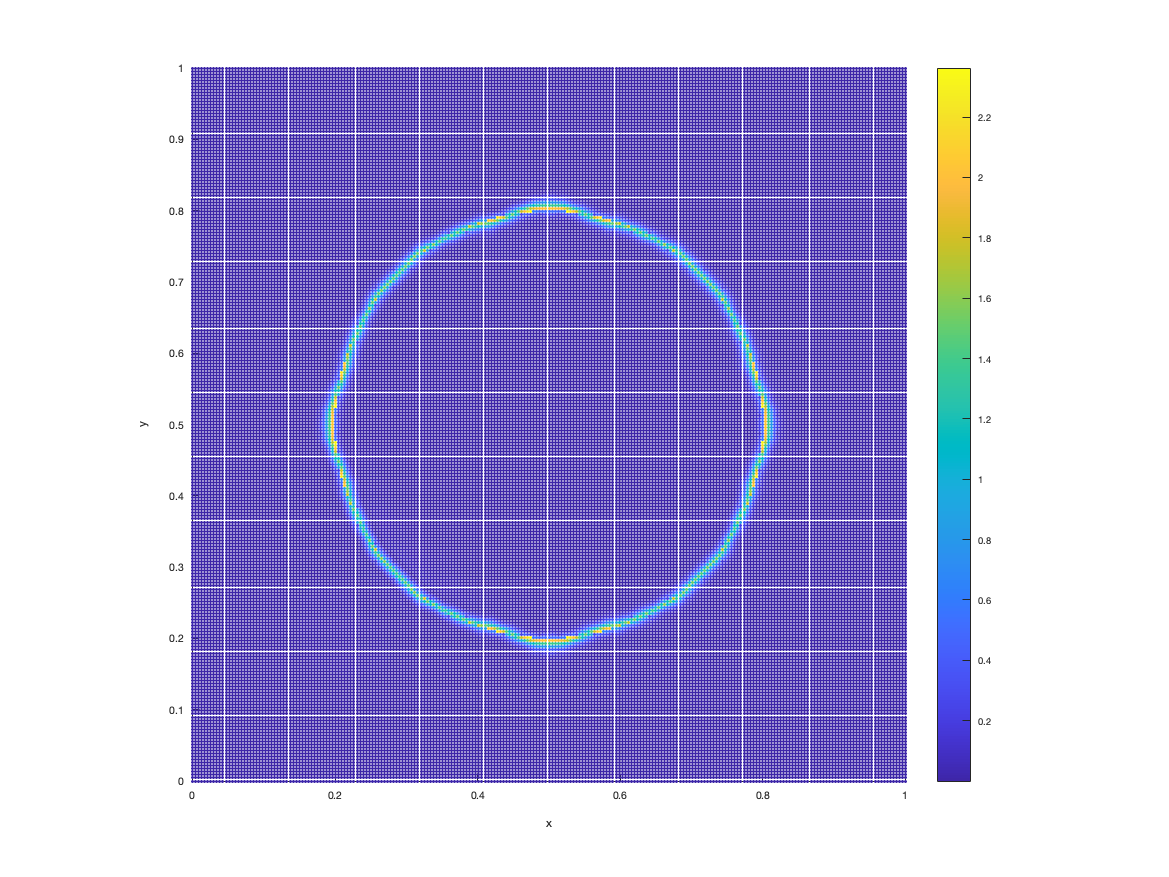}\\
	\hspace{-1cm}\includegraphics[width=6cm]{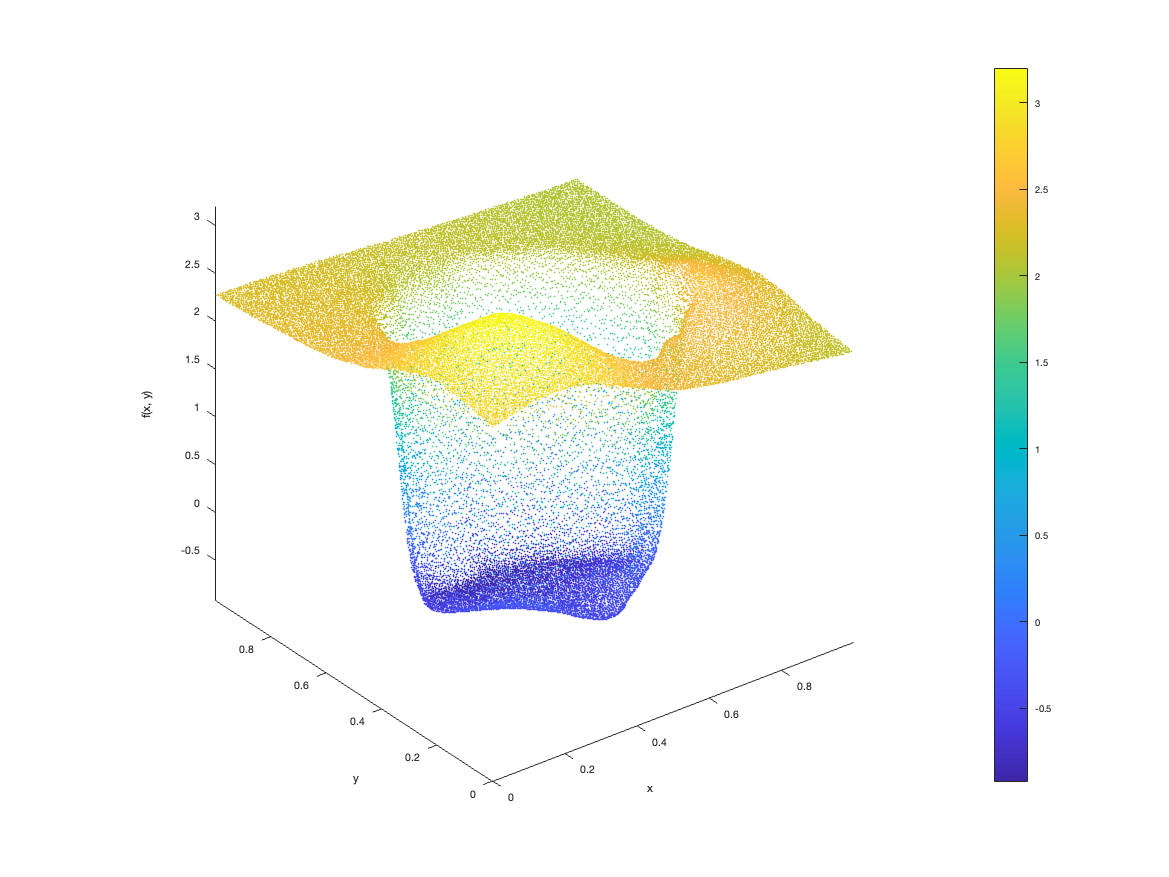} & 	\hspace{-0.9cm}\includegraphics[width=6cm]{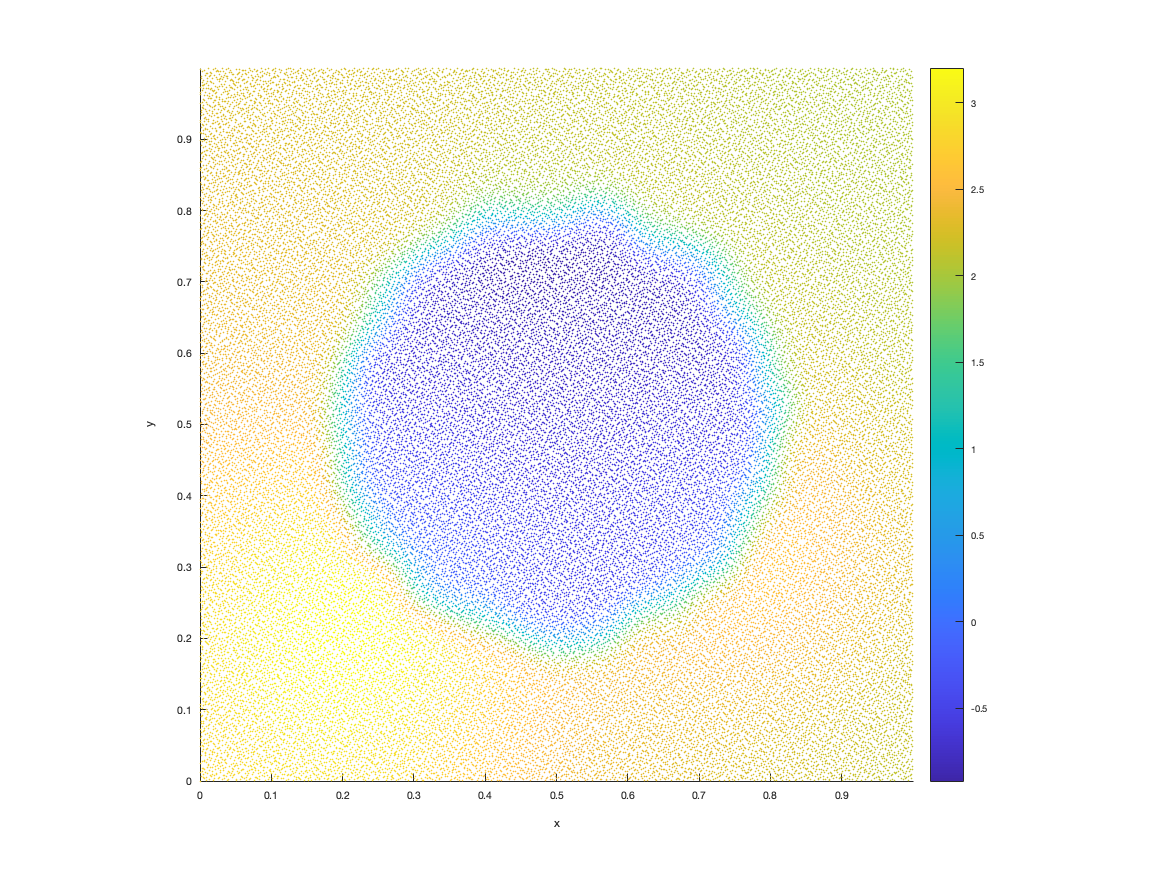} & \hspace{-0.9cm}\includegraphics[width=6cm]{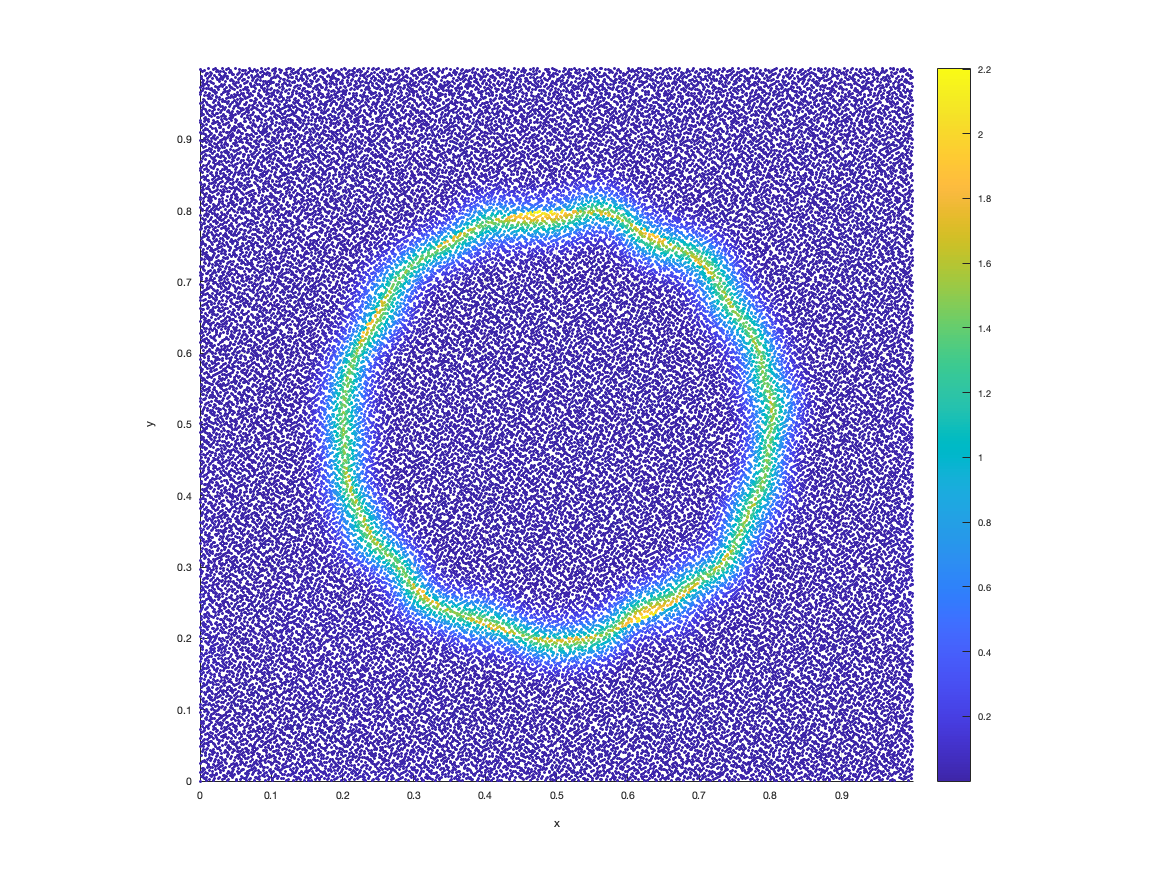}\\
\hspace{-1cm}\includegraphics[width=6cm]{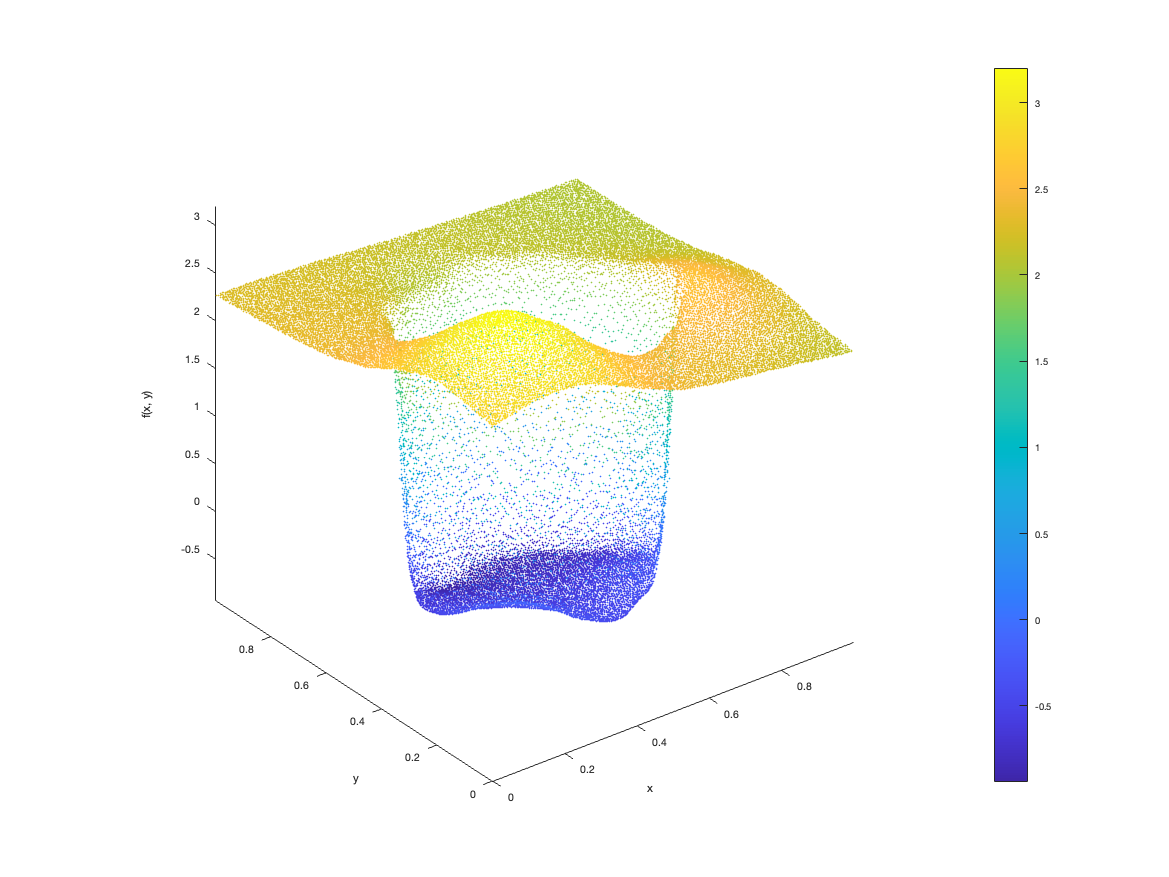} & 	\hspace{-0.9cm}\includegraphics[width=6cm]{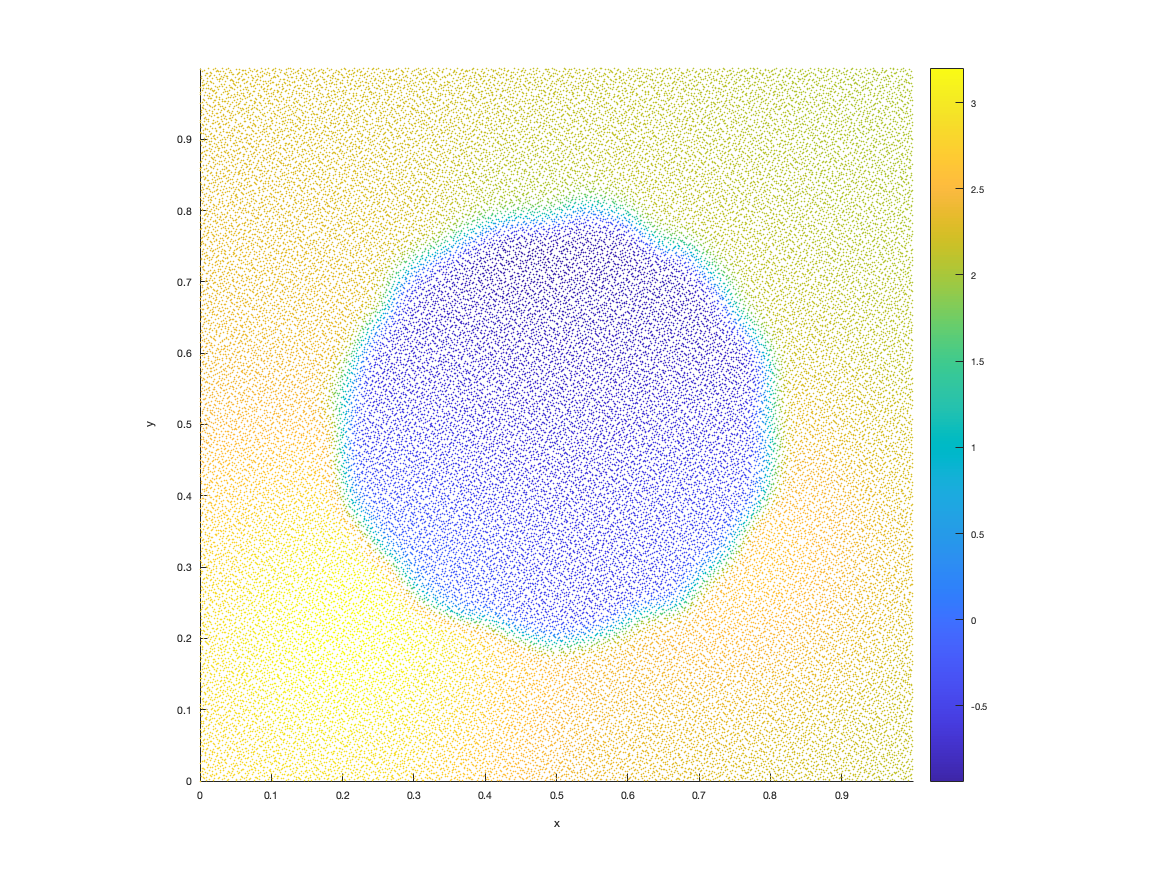} & \hspace{-0.9cm}\includegraphics[width=6cm]{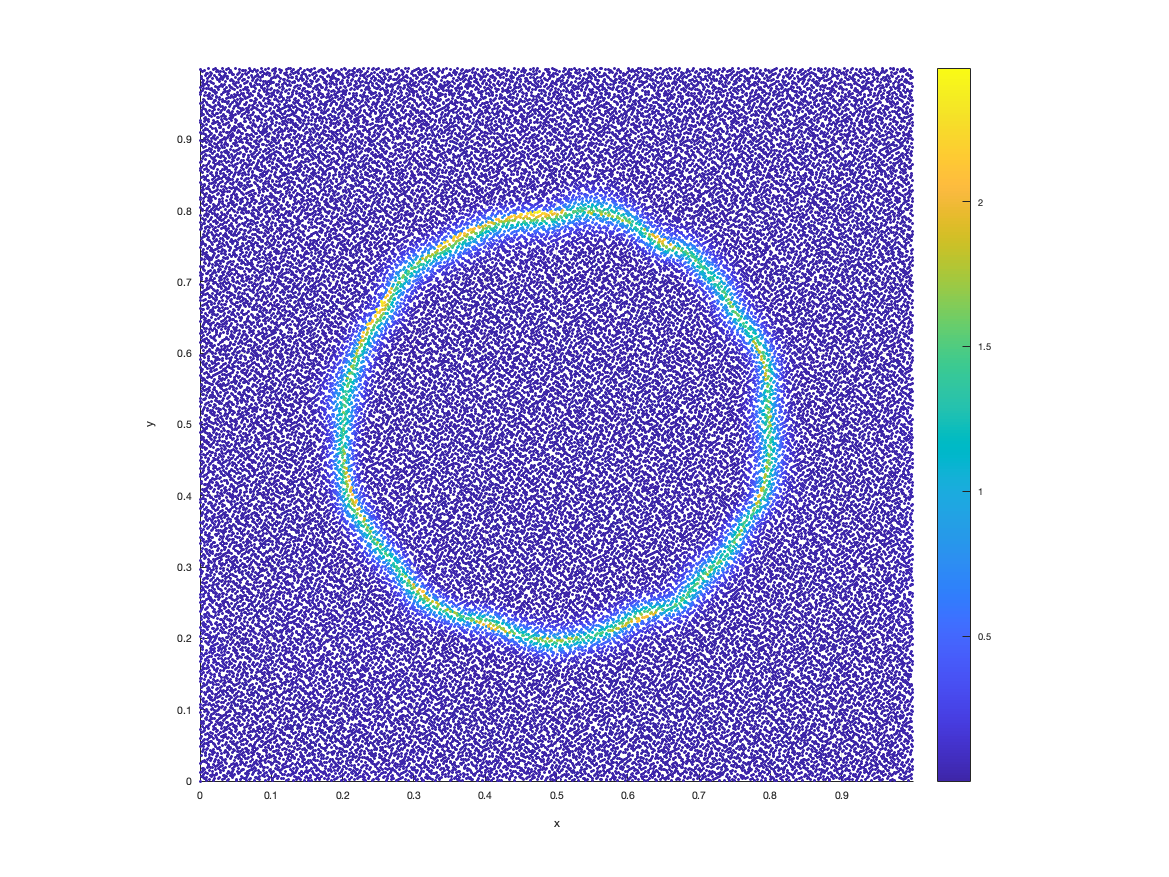}\\
		\end{tabular}
\end{center}
			\caption{Approximation of the piecewise smooth function \(f_1\) (defined in Eq.~\eqref{frankesdisc}). 
Each figure is arranged in three columns: the left column shows the surface reconstructed using either the linear RBF\(_{\text{W4}}\) method or its data-dependent counterpart NL-RBF\(_{\text{W4}}\). The central column displays the same approximation from a top (cenital) view. 
The right column depicts the pointwise absolute error over the evaluation grid.
The figure contains four rows: 
Row~1 shows the linear RBF\(_{\text{W4}}\) approximation using gridded points, 
Row~2 shows the data-dependent NL-RBF\(_{\text{W4}}\) approximation on the same grid, 
Row~3 presents the linear RBF\(_{\text{W4}}\) approximation computed from Halton points, 
and Row~4 displays the corresponding data-dependent NL-RBF\(_{\text{W4}}\) result.}
		\label{exp5_2D}
	\end{figure}

\section{Conclusions}
In this work, we have presented a data-dependent extension of the classical Shepard interpolation framework, designed specifically to reduce the smearing artifacts that typically appear near discontinuities in one and two dimensions. The key idea is to modify the Shepard weights in an adaptive manner so that, close to a discontinuity, the influence of nearby nodes becomes increasingly localized. As the weighting mechanism sharpens in these regions, the corresponding basis functions behave more like delta functions.

A crucial element of the method is the discontinuity detection strategy, which relies on smoothness indicators constructed for both structured and unstructured datasets. For regularly gridded data, we employ squared undivided second differences, while for scattered samples we use least-squares approximations of the Laplacian, all of it squared, multiplied by the square of the average local spacing. These indicators guide the adaptive adjustment of the interpolation weights.

We have also given a proof to show that the modified Shepard method is capable of reducing the smearing belt around the discontinuity.

The numerical tests carried out in both one and two dimensions illustrate the benefits of the proposed strategy. For piecewise smooth functions, the data-dependent Shepard method substantially diminishes the smearing produced by the traditional version along discontinuity curves. While the classical scheme tends to generate strong smoothing patterns that spread away from the jump, the data-dependent approach successfully reduces these effects, although a minor amount of smoothing near the discontinuity persists. 

These observations hold consistently for both uniform grids and Halton-distributed nodes, demonstrating the robustness of the methodology across different sampling settings. Overall, the data-dependent Shepard interpolation approach introduced here offers an effective and reliable tool for reconstructing piece-wise smooth data with discontinuities, and it provides a promising basis for applications in which stable and accurate recovery of this kind of data is essential.

\appendix
\section{Exponential Upper Bounds for the Kernels in Table~\ref{tabla1nucleos}}\label{apendice}

In this annex we prove that all radial basis functions (RBFs) listed in 
Table~\ref{tabla1nucleos} satisfy an exponential upper bound of the form
\begin{equation}
\label{eq:expbound}
\phi(\varepsilon r)\;\le\;C_1\,\exp(-C_2\,\varepsilon r),
\qquad r>0,\;\varepsilon>0,
\end{equation}
for suitable positive constants $C_1$ and $C_2$ depending only on the chosen
kernel.  This bound is fundamental in establishing the decay of the ``wrong
side'' weights near a discontinuity and, consequently, the shrinking of the
smearing belt as the shape parameter~$\varepsilon$ increases.

\subsection*{A.1 Gaussian kernel}
\vspace{0.5cm}

Consider the Gaussian kernel,
\[
\phi(r,\varepsilon)=\exp\!\bigl(-(\varepsilon r)^2\bigr).
\]
For any $x>0$ it holds that $x^2\ge x$ when $x\ge1$, while on $0<x<1$ the 
function $e^{-x^2}$ is still dominated by $e^{-x}$ because the exponent 
$-x^2$ is larger than $-x$.  Hence for all $x>0$,
\[
e^{-x^2} \le e^{-x}.
\]
Setting $x=\varepsilon r$ yields
\[
\phi(\varepsilon r)=e^{-(\varepsilon r)^2}
\;\le\;
e^{-\varepsilon r}.
\]
Thus \eqref{eq:expbound} holds with $C_1=1$ and $C_2=1$.

\subsection*{A.2 Inverse multiquadric (IMQ)}
\vspace{0.5cm}

Let now consider,
\[
\phi(r,\varepsilon)=\bigl(1+(\varepsilon r)^2\bigr)^{-1/2}.
\]
For all $x>0$ the inequality
\[
1+x \;\ge\; e^{x/2}
\quad \Longrightarrow \quad
(1+x)^{-1/2}\le e^{-x/4}
\]
holds.  With $x=(\varepsilon r)^2$ we obtain
\[
\phi(\varepsilon r)
\le \exp\!\left(-\frac{(\varepsilon r)^2}{4}\right)
\le \exp\!\left(-\frac{\varepsilon r}{4}\right),
\]
where the last inequality follows from the previously established Gaussian bound.
Hence \eqref{eq:expbound} holds with $C_1=1$ and $C_2=\tfrac14$.

\subsection*{A.3 Mat\'ern kernels}
\vspace{0.5cm}

For Mat\'ern $\mathcal{C}^2$ kernel, we have:
\[
\phi(r,\varepsilon)=e^{-\varepsilon r}(1+\varepsilon r).
\]
Using the standard inequality $1+x\le e^x$ for $x>0$, we obtain
\[
e^{-\varepsilon r}(1+\varepsilon r)
\;\le\;
e^{-\varepsilon r}\,e^{\varepsilon r/2}
=
e^{-(\varepsilon r)/2}.
\]
Thus the bound \eqref{eq:expbound} holds with $C_1=1$ and $C_2=\tfrac12$.
\vspace{0.5cm}

For Mat\'ern Mat\'ern $\mathcal{C}^4$ kernel:
\[
\phi(r,\varepsilon)=e^{-\varepsilon r}\left(3 + 3\varepsilon r + (\varepsilon r)^2\right).
\]
Since every polynomial is eventually dominated by an exponential, there exists
a constant $K>0$ such that
\[
3 + 3x + x^2 \;\le\; K\,e^{x/2} \qquad (x>0).
\]
Substituting $x=\varepsilon r$ yields
\[
\phi(\varepsilon r)\le K\,e^{-(\varepsilon r)/2}.
\]
Therefore \eqref{eq:expbound} holds with $C_1=K$ and $C_2=\tfrac12$.

\subsection*{A.4 Wendland kernels}
\vspace{0.5cm}

For Wendland $\mathcal{C}^2$ kernel:
\[
\phi(r,\varepsilon)
=
(1-\varepsilon r)^4_+\,(4\varepsilon r+1).
\]
When $\varepsilon r\ge 1$, the kernel vanishes and the inequality 
\eqref{eq:expbound} is trivial.  
When $0 < \varepsilon r < 1$, both factors are bounded:
\[
(1-\varepsilon r)^4 \le 1,
\qquad
4\varepsilon r+1 \le 5.
\]
Hence $\phi(\varepsilon r)\le 5$.  Since exponentials dominate constants on
compact intervals, choose any $C_2>0$ and take 
$C_1 := 5 e^{C_2}$.  Then
\[
5 \le 5e^{C_2} e^{-C_2\varepsilon r},
\qquad 0<\varepsilon r<1.
\]
Hence \eqref{eq:expbound} holds for the $\mathcal{C}^2$ Wendland kernel.
\vspace{0.5cm}

For Wendland $\mathcal{C}^4$ kernel:

The argument is identical: inside the support $0<\varepsilon r<1$, the
polynomial factor is bounded, and beyond the support the kernel vanishes.
Therefore the kernel is dominated by a suitably scaled exponential, and
\eqref{eq:expbound} follows.

\end{document}